\newcommand{\nbb}{\mathbb{N}}
\newcommand{\rbb}{\mathbb{R}}
\newcommand{\cbb}{\mathbb{C}}
\renewcommand{\L}{\mathcal{L}}
\newcommand{\Pcal}{\mathcal{P}}
\newcommand{\B}{\mathcal{B}}
\newcommand{\la}{\langle}
\newcommand{\ra}{\rangle}
\newcommand{\ug}{u^\gamma}
\renewcommand{\i}{\textup{i}}
\newcommand{\mi}{\wedge}
\newcommand{\HS}{\textup{HS}}
\renewcommand{\d}{\textup{d}}
\newcommand{\grad}{\nabla}
\newcommand{\Fcal}{\mathcal{F}}
\newcommand{\E}{\mathbb{E}}
\renewcommand{\P}{\mathbb{P}}
\renewcommand{\Re}{\text{Re}}
\theoremstyle{plain}
\newtheorem{theorem}{Theorem}[section]
\newtheorem{lemma}[theorem]{Lemma}
\newtheorem{assumption}[theorem]{Assumption}
\newtheorem{proposition}[theorem]{Proposition}
\newtheorem{definition}[theorem]{Definition}
\theoremstyle{definition}
\newtheorem{remark}[theorem]{Remark}
\numberwithin{equation}{section}
 \title{Polynomial mixing for the stochastic Schr\"odinger equation with large damping in the whole space}
\author{Hung D.~Nguyen$^1$ and Kihoon Seong$^2$}
\address{$^1$ Department of Mathematics, University of Tennessee, Knoxville, Tennessee, USA}
\address{$^2$ Department of Mathematics, Cornell University, Ithaca, New York, USA, and Simons Laufer Mathematical Sciences Institute (SLMath), 17 Gauss Way, Berkeley, CA 94720, USA}
\begin{document}

\begin{abstract}
We study the long-time mixing behavior of the stochastic nonlinear Schr\"odinger equation in $\rbb^d$, $d\le 3$.  It is well known that, under a sufficiently strong damping force, the system admits unique ergodicity, although the rate of convergence toward equilibrium has remained unknown. In this work, we address the mixing property in the regime of large damping and establish that solutions are attracted toward the unique invariant probability measure at polynomial rates of arbitrary order. Our approach is based on a coupling strategy with pathwise Strichartz estimates.

\vspace{5mm}

\end{abstract}

\vspace{5mm}

\maketitle

\section{Introduction} \label{sec:intro}
In this paper, we study the mixing behavior of the nonlinear Schr\"odinger equation,
\begin{equation} \label{eqn:Schrodinger:original}
\i \d u (t) +  \triangle u(t)\d t +  \alpha  |u(t)|^{2\sigma}u(t)\d t+\lambda u(t)\d t = Q \d W(t),
\end{equation}

\noindent
where  $u:[0,\infty)\times\rbb^d\to \cbb$, $d \le 3$. On the left-hand side of \eqref{eqn:Schrodinger:original}, $\lambda>0$ denotes the damping constant, measuring the strength of the damping force,  $\sigma>0$ represents the nonlinear effect, $\alpha\in\{ -1,1\}$ where $\alpha=1$ and $\alpha=-1$ respectively correspond to the focusing and defocusing equations, $QW(t)$ is a white-in-time, color-in-space Wiener process defined on some Hilbert space $U$ and whose spatial covariance operator is given by the mapping $Q:U\to L^2(\rbb^d)$.

It is well-known that the Schr\"odinger equation posed on one-dimensional compact intervals with cubic nonlinearity possesses a unique invariant probability measure, which is polynomially attractive \cite{debussche2005ergodicity,nguyen2024inviscid}. While there are several existence and unique ergodic results for equation \eqref{eqn:Schrodinger:original} in $\rbb^d$ \cite{brzezniak2023ergodic, brzezniak2023invariant, ekren2017existence, ferrario2025stationary, kim2008stochastic}, to the best of the authors' knowledge, the issue of mixing rate of \eqref{eqn:Schrodinger:original} in $\rbb^d$ is still poorly understood. Our goal is to make progress in this direction with the aim of bridging the gap between bounded domains and the whole space. More specifically, the main result of the paper establishes an algebraic convergence rate of \eqref{eqn:Schrodinger:original} toward equilibrium, provided a sufficiently strong damping effect. This is summarized below through Theorem \ref{thm:pseudo:poly-mixing}, whose rigorous statement can be found in Theorem \ref{thm:unique-ergodicity} and Theorem \ref{thm:poly-mixing}.

\begin{theorem} \label{thm:pseudo:poly-mixing}
In dimension $d\le 3$, let $u(t;u_0)$ be the solution of \eqref{eqn:Schrodinger:original} with initial condition $u_0$. 

1. Under appropriate assumptions on $\sigma$ and $QW$, for all $\lambda$ large enough, there exists a unique invariant probability measure for \eqref{eqn:Schrodinger:original}.  

2. Suppose $\sigma$ satisfies further restriction. Then, the following holds for all suitable observables $f:\cbb\to\rbb$ and initial conditions $u_0^1$ and $u_0^2$
\begin{align} \label{ineq:poly-mixing:pseudo}
    \big|\E f \big(u(t;u_0^1)\big) - \E f \big(u(t;u_0^2)\big)\big| \le \frac{C}{(1+t)^q},\quad t\ge 0,\, q>0.
\end{align}
In the above, $C$ is a positive constant independent of $t$.

\end{theorem}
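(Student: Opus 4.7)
My plan attacks both parts through a synchronous coupling supported by pathwise Strichartz estimates for the free Schr\"odinger propagator on $\rbb^d$.

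For Part 1, I would first derive moment bounds of arbitrary order, $\sup_{t\ge 0}\E \|u(t)\|_{H^s}^{2p}<\infty$ for $s\in\{0,1\}$ and every $p\ge 1$, by applying It\^o's formula to $\|u\|_{L^2}^{2p}$ and to a suitable $H^1$-energy. The damping, once $\lambda$ exceeds a threshold determined by the covariance of $QW$ together with the Strichartz/Sobolev constants required to absorb $|u|^{2\sigma}u$ (using $d\le 3$ and the hypothesized restriction on $\sigma$), dominates both the noise input and the nonlinear contribution, producing a Foster--Lyapunov-type estimate $\ddt\E V(u(t))\le -c\,V(u(t))+C$ with $V(u)=\|u\|_{H^1}^{2p}$. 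Existence of an invariant measure then follows from a Krylov--Bogoliubov argument; tightness on the noncompact domain $\rbb^d$ is ensured by combining these uniform $H^1$ bounds with spatial-decay estimates inherited from the covariance of $QW$.

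For Part 2, given initial data $u_0^1,u_0^2$, I would consider the coupling $(u^1(t), u^2(t))$ driven by the same Wiener process. The difference $v=u^1-u^2$ obeys
\begin{equation*}
\i\, \d v + \triangle v\,\d t + \alpha\bigl(|u^1|^{2\sigma}u^1 - |u^2|^{2\sigma}u^2\bigr)\,\d t + \lambda v\,\d t = 0,
\end{equation*}
and the evolution of its $L^2$ energy has the schematic form
\begin{equation*}
\ddt \|v(t)\|_{L^2}^2 \le -2\lambda\|v(t)\|_{L^2}^2 + C\,\Phi(u^1,u^2)(t)\,\|v(t)\|_{L^2}^2,
\end{equation*}
where the random prefactor $\Phi(u^1,u^2)(t)$ is bounded pathwise in terms of Strichartz norms of $u^1,u^2$ on space-time slabs $[k,k+1]\times \rbb^d$. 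These Strichartz norms are in turn controlled by the Lyapunov function from Part 1 except on a small-probability event. On the complementary good event, taking $\lambda$ larger than the typical $\Phi$-bound yields pathwise exponential contraction $\|v(t)\|_{L^2}\le e^{-\kappa t}\|u_0^1-u_0^2\|_{L^2}$ for some $\kappa>0$.

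To obtain \eqref{ineq:poly-mixing:pseudo}, for a bounded Lipschitz observable $f$ I would split
\begin{equation*}
\bigl|\E f(u(t;u_0^1)) - \E f(u(t;u_0^2))\bigr| \le \|f\|_\infty\, \P(\B_t) + \|f\|_{\mathrm{Lip}}\,\E\bigl[\mathbf 1_{\B_t^c}\|v(t)\|_{L^2}\bigr],
\end{equation*}
where $\B_t$ is the bad event on which Strichartz control of $u^1,u^2$ fails by time $t$. Markov--Chebyshev applied to the arbitrary-order moment bounds of Part 1 gives $\P(\B_t)\le C_q(1+t)^{-q}$ for every $q>0$, while the coupling contracts exponentially on $\B_t^c$, so the sum yields \eqref{ineq:poly-mixing:pseudo}. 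The main obstacle is the absence of compact Sobolev embeddings on $\rbb^d$, which rules out the standard Foias--Prodi or spectral-gap-style high-mode absorption available on bounded domains; pathwise Strichartz estimates serve as a substitute, but impose restrictions on $\sigma$ that tighten as $d$ grows (most stringent in $d=3$) and demand careful pathwise bookkeeping, since the controlling Strichartz norms are themselves random and bounded only with high, not full, probability.
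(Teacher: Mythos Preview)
Your high-level strategy---synchronous coupling driven by the same noise, controlling the nonlinear contribution to $\tfrac{\d}{\d t}\|v\|_{L^2}^2$ via pathwise Strichartz bounds, and exploiting large $\lambda$---matches the paper exactly. The differential inequality you write for $\|v\|_{L^2}^2$ is precisely \eqref{ineq:|u_1-u_2|} in the paper, and the Strichartz substitute for Foias--Prodi is the correct idea in $\rbb^d$.

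The gap is in your claim that ``Markov--Chebyshev applied to the arbitrary-order moment bounds of Part~1 gives $\P(\B_t)\le C_q(1+t)^{-q}$.'' Uniform-in-time moment bounds $\sup_t \E\,V(u(t))^p<\infty$ give decay in a \emph{threshold} $R$, not in \emph{time}: if $\B_t=\{\int_0^t \Phi(u^1,u^2)(s)\,\d s>\lambda t\}$, then Markov on $\E\bigl[\int_0^t\Phi\,\d s\bigr]^p\le C\,t^p$ yields only a constant upper bound, independent of $t$. What is actually needed is a concentration estimate for the time-integral around its linear growth rate. In the paper this comes from the \emph{pathwise energy balance}: applying It\^o to $\Phi_\alpha(u)^n$ produces the identity $\Phi_\alpha(u(t))^n+\lambda\int_0^t[\cdots]\,\d s\le \Phi_\alpha(u_0)^n+K_nQ_{1,\alpha,n}t+M_{\alpha,n}(t)$ with an explicit martingale $M_{\alpha,n}$, and then Burkholder--Davis--Gundy on $M_{\alpha,n}$ (whose quadratic variation grows linearly) yields the polynomial tail bounds of Lemma~\ref{lem:P(sup[Phi+int.Phi])}. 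Your proposal never isolates this martingale, so the mechanism producing polynomial-in-$t$ decay is missing.

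A second, related omission is irreducibility. Even with the correct martingale tail bound, the initial-data contribution $\Phi_\alpha(u_0)^n$ sits additively in the energy balance and spoils the contraction for large $u_0$ at moderate times. The paper does not rely on a single good/bad split; it runs the Debussche--Odasso coupling machinery, tracking a random time $\ell_{\theta,\beta}(k)$ at which both solutions have entered a fixed ball and subsequently stayed ``moderate'' (Definition~\ref{def:ell_beta}). This requires an irreducibility lemma (Lemma~\ref{lem:irreducibility}, which in turn needs the $H^3$ noise regularity of Assumption~\ref{cond:Q:poly-mixing}) guaranteeing that solutions return to small balls with positive probability. The three-lemma structure (stay close once coupled; positive probability of coupling; polynomially small probability of decoupling) is what yields the linear dependence on $\Phi_\alpha(u_0^1)+\Phi_\alpha(u_0^2)$ in \eqref{ineq:poly-mixing}; your direct split would at best produce dependence on $\Phi_\alpha(u_0^i)^{nq}$ for large $nq$.

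Finally, your Part~1 discussion addresses only existence (which the paper cites from \cite{ekren2017existence}); the new content is uniqueness. The paper proves uniqueness separately, via Birkhoff's ergodic theorem combined with the $\lambda$-independent bound $\int\|u\|_{L^\infty}^{2\sigma}\nu(\d u)<C$ of Lemma~\ref{lem:nu:regularity}, rather than as a corollary of mixing.
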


Historically, the well-posedness of \eqref{eqn:Schrodinger:original} in the presence of external stochastic forcing has been investigated through a vast literature. Research in this direction appeared as early as in the work of \cite{de1999stochastic} and was later explored in \cite{de2003stochastic} to account for both additive and multiplicative noise. Since then, there has been an extensive study on the existence and uniqueness of the solutions of \eqref{eqn:Schrodinger:original} under different assumptions on the regularity of the randomness, the nonlinearity as well as the domains' structures \cite{barbu2014stochastic,barbu2016stochastic,barbu2017stochastic,barbu2017stochasticLOG,
 brzezniak2025global,brzezniak2019martingale,  cui2019global,
de2010nonlinear,debussche20111d, ferrario2025stationary,
hornung2018nonlinear, roy2025large}. Despite the rich history on the well-posedness theory, the statistically steady states of \eqref{eqn:Schrodinger:original} and the convergence rate of \eqref{eqn:Schrodinger:original} seem to receive less attention. To mention a few examples, the existence of invariant probability measures was established in \cite{kim2006invariant} for the defocusing case ($\alpha=-1$). The results therein was later extended in \cite{ekren2017existence} to treat the focusing scenario. We note that due to the setting of $\rbb^d$ which precludes access to compactness in a natural way, the work of \cite{ekren2017existence} has to employ a generalized version of the Krylov-Bogoliubov procedure, so as to overcome the lack of Sobolev embedding. This is also found to be the main challenge in the related two-dimensional stochastic Euler equation \cite{bessaih2020invariant}. While the conditions for the existence of steady states in \cite{ekren2017existence} are compatible with those for the well-posedness in \cite{de2003stochastic}, the unique ergodicity of \eqref{eqn:Schrodinger:original} requires a strong damping effect as well as restrictions on the parameter $\sigma$ and the spatial dimension $d\le 3$ \cite{brzezniak2023ergodic}. Notably, it was previously shown in \cite{brzezniak2023ergodic} that there is only one invariant measure $\nu$ provided that $\lambda$ is sufficiently large and that
\begin{align*}
\begin{cases}
    0<\sigma<\infty,& d=1,2,\\
    0<\sigma\le\frac{1+\sqrt{17}}{4},&d=3.
\end{cases}
\end{align*}
Here, the upper bound $(1+\sqrt{17})/4$ in dimension $d=3$ is a technicality resulting from a series of delicate interpolations between Sobolev embeddings and the classical Strichartz inequalities. One of the contributions of the present article is the extension of this threshold. More specifically, we are able to improve the pathwise control on the nonlinearity established in \cite{brzezniak2023ergodic} by facilitating the damping effect and dispersive estimates on the Schr\"odinger semigroup, cf. Lemma \ref{lem:Strichartz:S(t)u}. In turn, this allows us to conclude the uniqueness of $\nu$ when
\begin{align*}
   \lambda\gg 1,\quad \text{and}\quad \begin{cases}
    0<\sigma<\infty,& d=1,2,\\
    0<\sigma<\frac{3}{2},&d=3.
\end{cases}
\end{align*}
We refer the reader to Theorem \ref{thm:unique-ergodicity} for the precise statement and to Section \ref{sec:poly-mixing:proof-unique-ergodicity} for its proof. See also related work for the stochastic Schr\"odinger equation posed in manifolds \cite{brzezniak2023invariant} or in the presence of multiplicative noise \cite{brzezniak2025global}. 

With regard to the problem of mixing rate, we remark that in the literature of SPDEs, geometric ergodicity is significantly more popular compared to the situation of sub-exponential rate. In particular, there are many dispersive dynamics whose invariant measures are exponentially attractive, e.g., the KdV equations with large damping \cite{glatt2021long}, the Ginzburg-Landau equations \cite{nersesyan2024exponential, odasso2006ergodicity}, as well as the wave equations \cite{cerrai2020convergence, martirosyan2014exponential,nguyen2023small}. The common feature of these settings and other parabolic systems is that they possess a strong dissipative structure thanks to the appearance of the Laplacian operator or the damping. See also related systems with a localized noise structure \cite{chen2025exponential,liu2024exponential}. In contrast, sub-exponential rates are more difficult to handle, owing to a weak mechanism for balancing energy. To mention several examples, we refer the reader to the Schr\"odinger equations in bounded domains \cite{debussche2005ergodicity,nguyen2024inviscid}, the Ginzburg-Landau equations perturbed by special noise at random times \cite{nersesyan2008polynomial}, the Navier-Stokes in $\rbb^2$ \cite{nersesyan2024polynomial}, the Kuramoto-Sivashinsky equations \cite{gao2024polynomial}, the wave equations \cite{gao2024polynomialWAVE,nguyen2024polynomial} and a stochastic conservation law \cite{dong2023ergodicity}. In literature, a Foias-Prodi typed estimate is typically employed to tackle the issue of mixing rates. Roughly speaking, starting from two initial conditions, this argument provides a means to 
control the high modes (in Fourier frequencies) of the two solutions in short distance from each other, assuming the low modes are synchronized. Particularly, the technique has proven to play a fundamental role in \cite{debussche2005ergodicity, ferrario2023uniqueness, glatt2021long, nguyen2024inviscid,odasso2006ergodicity} in the settings of bounded domains, allowing for the convenience of working with the discrete spectrum of the Laplacian, which are otherwise unavailable in $\rbb^d$ scenario. More recently, this challenge was addressed in \cite{nersesyan2024exponential, nersesyan2024polynomial} where a novel generalized Foias-Prodi agument is developed to circumvent the lack of compactness properties. See also related work of \cite{gao2024polynomialWAVE,gao2024polynomial}.  

Turning back to \eqref{eqn:Schrodinger:original}, we note that since the results from \cite{nersesyan2024exponential, nersesyan2024polynomial} rely on the local nature of the nonlinearity of the concerning dynamics, it is not clear how to adapt the technique therein to \eqref{eqn:Schrodinger:original}. Nevertheless, in the large damping regime, we resort to the coupling framework of \cite{debussche2005ergodicity,nguyen2024inviscid,odasso2006ergodicity} and successfully overcome the issue of the whole space by harnessing the Lyapunov functions and the Strichartz inequalities. More specifically, motivated by \cite{brzezniak2023ergodic, glatt2021long}, the proof of Theorem \ref{thm:pseudo:poly-mixing} is based on the following path-wise estimate
\begin{align} \label{ineq:pseudo:u_1-u_2}
    \|u_1(t)-u_2(t)\|^2_{L^2(\rbb^d)} \le C\exp\Big\{ t\Big(-\lambda  +\frac{1}{t} \int_0^t \|u_1(s)\|^{2\sigma}_{L^\infty(\rbb^d)}+\|u_2(s)\|^{2\sigma}_{L^\infty(\rbb^d)}\d s\Big)  \Big\}.
\end{align}
In the above, $u_1$ and $u_2$ are two solutions' trajectories corresponding to two distinct initial conditions. Regarding the right-hand side of \eqref{ineq:pseudo:u_1-u_2}, we can leverage Sobolev embeddings together with Strichartz estimates to the extent that the time-average of the $L^\infty$ norm can be approximated by suitable moments of the invariant measures. In turn, one may pick $\lambda$ sufficiently large so as to deduce the unique ergodicity, albeit without a mixing rate \cite{brzezniak2023ergodic}. See Section \ref{sec:poly-mixing:proof-unique-ergodicity} for a more detailed discussion of this point. To navigate the mixing difficulty, it is crucial to ensure that one can keep track of the growth of the $L^\infty$ norm over time. In our work, we tackle the problem using a two-fold strategy as follows: firstly, we exploit the damping effect from $\lambda>0$ to strengthen the Strichartz-typed inequalities established in \cite{brzezniak2023ergodic}, cf. Lemma \ref{lem:Strichart:|u|_infty} and Remark \ref{rem:S(t)F}, which provides us a control on the $L^\infty$ norm via Lyapunov functions and the noise trajectories, cf. Lemma \ref{lem:Strichart:int_0^t|u|_infty^2sigma}. We remark that in order to obtain such bound, we have to further restrict the range of $\sigma$, namely,
\begin{align*}
    \begin{cases}
    0<\sigma,& d=1,2,\\
    \frac{1}{6}<\sigma<\frac{3}{2},&d=3.
\end{cases}
\end{align*}
Then, under the assumption $\lambda$ is large enough, we facilitate the coupling technique from \cite{nguyen2024inviscid} while making use of the auxiliary Strichartz estimates to ultimately deduce an ergodic rate, cf. Theorem \ref{thm:poly-mixing}. In particular, this amounts to showing that the likeliness two solutions enter a ball and stay in close proximity is high, whereas the probability they become decoupled leaving the ball is small. We remark that in literature, geometric ergodicity usually relies on exponential moment bounds on the solutions \cite{nersesyan2024exponential,odasso2006ergodicity}, which can be obtained via the exponential Martingale inequality. However, such a result is currently not available in the settings of \eqref{eqn:Schrodinger:original}, for which one can only derive polynomial moment bounds, hence the algebraic convergence rate \eqref{ineq:poly-mixing:pseudo}. It is also worthwhile to mention that 
unlike related dynamics \cite{debussche2005ergodicity, nersesyan2024exponential, nguyen2024inviscid, odasso2006ergodicity} that typically invoke the Girsanov Theorem to ensure the validity of changing measures, our coupling strategy is able to circumnavigate the issue thanks to the large damping nature. This significantly simplifies the mixing argument while still achieving the same coupling effect. We refer the reader to Theorem \ref{thm:poly-mixing} for a precise statement of Theorem \ref{thm:pseudo:poly-mixing}, part 2, and to Section \ref{sec:poly-mixing} where we present the proof of Theorem \ref{thm:poly-mixing}.

The rest of the paper is organized as follows. In Section \ref{sec:main-result}, we introduce the functional settings and the assumptions needed for the analysis. We also state our main results through Theorem \ref{thm:unique-ergodicity} and Theorem \ref{thm:poly-mixing} concerning respectively the unique ergodicity and polynomial mixing of \eqref{eqn:Schrodinger:original}. In Section \ref{sec:moment-bound}, we collect useful energy estimates on the solutions of \eqref{eqn:Schrodinger:original}, including their moment bounds as well as irreducibility conditions. In Section \ref{sec:poly-mixing}, we discuss the pathwise argument and the coupling strategy while making use of the estimates from Section \ref{sec:moment-bound} to establish Theorem \ref{thm:unique-ergodicity} and Theorem \ref{thm:poly-mixing}. The paper concludes with Appendices \ref{sec:Strichartz} and \ref{sec:aux-results}. Particularly, we derive specific Strichartz inequalities in Appendix \ref{sec:Strichartz} whereas we supply several auxiliary results in Appendix \ref{sec:aux-results}, both of which are employed to prove the main theorems.

\section{Assumptions and main results} \label{sec:main-result}

\subsection{The functional settings}\label{sec:main-result:setting}

For $p\ge 1$, let $L^p:=L^p(\rbb^d;\cbb)$ denote the usual Lebesgue space of complex-valued functions on $\rbb^d$. In particular, when $p=2$, we denote by $H=L^2$ the Hilbert space endowed with the inner product
\begin{align*}
\la u,v\ra_H =  \Re \int_{\rbb^d} u(x) \bar{v}(x)\d x,
\end{align*}
and the norm $\|u\|^2_H = \la u,u\ra_H$. Moreover, for $s\in \rbb$, let $H^s:=H^s(\rbb^d;\cbb)$ be the Sobolev space with the norm
\begin{align*}
\|u\|^2_{H^s}=\int_{\rbb^d}(1+|\xi|^2)^s |\mathscr{F} u(\xi)|^2\d \xi,
\end{align*}
where $\mathscr{F} u$ denotes the Fourier transform of $u$. In general, for $p\in[1,\infty]$, we define $H^{s,p}$ with the norm
\begin{align*}
    \|u\|_{H^{s,p}} = \big\|\mathscr{F}^{-1}\big[(1+|\xi|^2)^{\frac{s}{2}}\mathscr{F} u\big]  \big\|_{L^p}.
\end{align*}
We recall the following Sobolev embeddings that will be useful throughout the analysis
\begin{align*}
    H^{s,p}\subset L^{\frac{pd}{d-sp} }, \quad 1\le p <\infty,\, 0<s<d/p,
\end{align*}
\begin{align*}
    H^{s,p}\subset L^\infty ,\quad p>1,\, s>d/p,
\end{align*}
\begin{align*}
 \textup{and}\quad   H^{s,p}\subset H^{s_1,p_1},\quad s-\frac{d}{p}=s_1-\frac{d}{p_1},\, 1< p\le p_1<\infty,\, s,s_1\in\rbb.
\end{align*}
See \cite[Section 1.4]{cazenave2003semilinear}, \cite[Theorem 6.5.1]{bergh2012interpolation} and \cite{triebel1978interpolation}. In particular, the last embedding will play a crucial role in the Strichartz estimates collected in Lemma \ref{lem:Strichart:|u|_infty}.

With regard to the noise structure $Q\d W(t)$, following \cite{brzezniak2023ergodic,brzezniak2023invariant}, let $U$ be a real Hilbert space with an orthonormal basis $\{e_j\}_{j\ge1}$. Then, $W$ is a cylindrical Wiener process defined on $U$ with the representation
\begin{align*}
W(t)=\sum_{k\ge 1}e_k B_k(t),
\end{align*}
where $\{B_k\}_{k\ge 1}$ is a sequence of i.i.d. standard Brownian motions defined on the same stochastic basis $\{\Omega,\Fcal,\Fcal_t,\P\}$ satisfying the usual conditions \cite{karatzas2012brownian}. Concerning the covariance operator $Q$, we will make the following assumption.

\begin{assumption} \label{cond:Q:well-posed}
The map $Q:U\to H^1$ is a Hilbert-Schmidt operator, i.e.,
\begin{align} \label{cond:Q:H1}
\|Q\|^2_{L_{\textup{HS}}(U,H^1)}=  \sum_{k\ge 1}\|Qe_k\|^2_{H^1}<\infty.
\end{align}

\end{assumption}

Turning to the nonlinearity of \eqref{eqn:Schrodinger:original}, we denote
\begin{align*}
    F_\alpha(u) = -\i\alpha|u|^{2\sigma}u.
\end{align*}
Following \cite{brzezniak2023ergodic,de2003stochastic}, we will impose different conditions on the parameter $\sigma$, depending on the values of $\alpha$ and dimension $d$.

\begin{assumption} \label{cond:sigma} 1. When $\alpha=1$ (focusing), $\sigma$ satisfies
\begin{align*} 
0< \sigma <\frac{2}{d},\quad d\le 3.
\end{align*}

2. When $\alpha=-1$ (defocusing), $\sigma$ satisfies
\begin{align*} 
\begin{cases} 
0<\sigma,& d=1,2,\\
0< \sigma< 2 ,& d=3.
\end{cases}
\end{align*}

\end{assumption}

Having introduced assumptions on the noise and the nonlinearity, in what follows, we briefly review the well-posedness of \eqref{eqn:Schrodinger:original}. Let $S(t)=e^{-it\triangle}$, $t\in\rbb$ denote the semigroup associated with the equation
\begin{align*}
    \i\frac{\d}{\d t}u- \triangle u =0.
\end{align*}
It is a classical result that equation \eqref{eqn:Schrodinger:original} admits a unique global mild solution for all initial conditions $u_0\in H^1$. Specifically, the well-posedness of \eqref{eqn:Schrodinger:original} is summarized in the following proposition.
\begin{proposition}{\cite[Theorem 3.4]{de2003stochastic}} \label{prop:well-posed}
Under Assumptions \ref{cond:Q:well-posed} and \ref{cond:sigma}, for every initial condition $u_0\in H^1$, there exists a unique global solution $u(t)=u(t;u_0)$ given by the mild formula
 \begin{align*}
        u(t)=e^{-\lambda t}S(t)u_0 + \int_0^t e^{-\lambda(t-s)}S(t-s)F_\alpha(u(s))\d s+\int_0^t e^{-\lambda(t-s)}S(t-s)Q\d W(s),\quad t\ge 0.
    \end{align*}
Furthermore, the solution $u(t;u_0)$
is continuous in $H^1$ with respect to $u_0$, for all fixed $t\ge 0$, i.e.,
\begin{align*}
    \E \|u(t;u^n_0) -u(t;u_0)\|_{H^1}\to 0,
\end{align*}
whenever $\|u_0^n-u_0\|_{H^1}\to 0$ as $n\to\infty$.
\end{proposition}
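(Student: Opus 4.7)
The plan is to follow the de Bouard--Debussche strategy: reduce the SPDE to a random PDE via the stochastic convolution, establish local well-posedness through a pathwise Strichartz contraction, and rule out blow-up using the damped mass/energy identity.

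First, I would introduce the stochastic convolution
\begin{align*}
Z(t)=\int_0^t e^{-\lambda(t-s)}S(t-s)Q\d W(s).
\end{align*}
Because $S(t)$ is unitary on $H^1$ and $Q\in L_{\HS}(U,H^1)$ by Assumption \ref{cond:Q:well-posed}, the Burkholder inequality yields $Z\in C([0,T];H^1)$ almost surely with all polynomial moments finite; the factor $e^{-\lambda(t-s)}$ only sharpens these bounds. Setting $v=u-Z$, the mild formulation becomes
\begin{align*}
v(t)=e^{-\lambda t}S(t)u_0+\int_0^t e^{-\lambda(t-s)}S(t-s)F_\alpha\big(v(s)+Z(s)\big)\d s,
\end{align*}
a deterministic damped NLS with a smooth forcing term for each fixed realization of $Z$.

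For local well-posedness I would pick an $H^1$-admissible Strichartz pair $(q,r)$ with $r=2\sigma+2$. Combining the inhomogeneous Strichartz estimates for $S(t)$, the fractional chain rule for $|u|^{2\sigma}u$, and the $H^1$-subcritical assumption (unrestricted in $d\le 2$, $\sigma<2$ in the defocusing $d=3$ case, $\sigma<2/d$ in the focusing case) gives a contraction on a ball of $L^q([0,T^\ast];H^{1,r})\cap C([0,T^\ast];H^1)$, with $T^\ast$ depending only on $\|u_0\|_{H^1}$ and $\|Z\|_{C([0,T];H^1)}$. This yields a unique maximal pathwise solution $u=v+Z$ in $H^1$.

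Global existence comes from It\^o's formula applied, after a standard truncation of the nonlinearity, to $\|u\|_H^2$ and to the Hamiltonian
\begin{align*}
E(u)=\tfrac{1}{2}\|\nabla u\|_H^2-\tfrac{\alpha}{2\sigma+2}\|u\|_{L^{2\sigma+2}}^{2\sigma+2}.
\end{align*}
The damping contributes coercive terms $-2\lambda\|u\|_H^2$ and $-2\lambda\|\nabla u\|_H^2$, while the stochastic corrections are controlled by Assumption \ref{cond:Q:well-posed}. In the focusing case, Gagliardo--Nirenberg
\begin{align*}
\|u\|_{L^{2\sigma+2}}^{2\sigma+2}\le C\|u\|_H^{2\sigma+2-d\sigma}\|\nabla u\|_H^{d\sigma}
\end{align*}
allows one to absorb the nonlinear contribution into $\|\nabla u\|_H^2$ precisely because $\sigma<2/d$ forces $d\sigma<2$; in the defocusing case the sign is favorable. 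This produces $\sup_{t\ge 0}\E\|u(t)\|_{H^1}^2<\infty$, extending the local solution to all $t\ge 0$. Continuity in $u_0$ then follows by reapplying the Strichartz contraction to the difference $u(t;u_0^n)-u(t;u_0)$ on a short interval and iterating with the aid of the $H^1$ moment bound together with dominated convergence. The main obstacle is the focusing regime in $d=3$: the range $\sigma<2/3$ from Assumption \ref{cond:sigma} must simultaneously keep the problem $H^1$-subcritical, secure $d\sigma<2$ for the Gagliardo--Nirenberg absorption, and leave room for a Strichartz-admissible pair $(q,r)$ with $r=2\sigma+2$, so that closing all three numerical constraints at once is the delicate point.
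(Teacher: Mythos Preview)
The paper does not give its own proof of this proposition; it simply cites \cite[Section 3]{de2003stochastic} and \cite[Theorem 2.5]{brzezniak2023ergodic}. Your sketch is precisely the de Bouard--Debussche strategy that the paper is invoking (subtract the stochastic convolution, run a pathwise Strichartz contraction for local existence, and globalize via It\^o's formula on the mass and Hamiltonian with the Gagliardo--Nirenberg control in the focusing case), so you are aligned with the cited source. Your closing remark about the ``main obstacle'' in the focusing $d=3$ case is somewhat overstated: under $\sigma<2/3$ all three constraints you list ($H^1$-subcriticality, $d\sigma<2$, and admissibility of $r=2\sigma+2$) are automatically satisfied, so there is no genuine tension to resolve there.
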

We refer the reader to \cite[Section 3]{de2003stochastic} for a further discussion of the above well-posedness statement. See also \cite[Theorem 2.5]{brzezniak2023ergodic}.

\subsection{Unique ergodicity} \label{sub:main-result:unique-ergodicity}

As a consequence of Proposition \ref{prop:well-posed}, we may define the Markov transition probabilities corresponding to the process $u(t;u_0)$ satisfying \eqref{eqn:Schrodinger:original} given by 
\begin{align*}
P_t(u_0,A):=\P(u(t;u_0)\in A),
\end{align*}
for each $t\geq 0$, $u_0\in H^1$, and Borel sets $A\subseteq H^1$. We let $\B_b(H^1)$ denote the set of bounded Borel measurable functions defined on $H^1$. The Markov semigroup associated to \eqref{eqn:Schrodinger:original} is the operator $P_t:\B_b(H^1)\to\B_b(H^1)$ defined by
\begin{align*}
P_t f(u_0)=\E[f(u(t;u_0))], \quad f\in \B_b(H^1).
\end{align*}
Recall that a probability measure $\nu\in \Pcal r(H^1)$ is said to be \emph{invariant} for the semigroup $P_t$ if for every $f\in \B_b(H^1)$
\begin{align*}
\int_{H^1} f(u_0) (P_t)^*\nu(\d u_0)=\int_{H^1} f(u_0)\nu(\d u_0),
\end{align*}
where $(P_t^{\mu})^*\nu$ denotes the measure obtained from $\nu$ by the action of $P_t^{\mu}$, i.e., 
$$\int_{H^1}f(u)(P_t)^*\nu(\d u)=\int_{H^1}P_t f(u)\nu(\d u).$$
We record the following existence result of an invariant probability measure of $P_t$ and refer the reader to \cite[Theorem 3.4]{ekren2017existence} for its proof.
\begin{proposition}{\cite[Theorem 3.4]{ekren2017existence}} \label{prop:unique-ergodicity}
  Under Assumptions \ref{cond:Q:well-posed} and \ref{cond:sigma}, $P_t$ admits at least one invariant probability measure $\nu$.

\end{proposition}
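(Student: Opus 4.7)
The plan is to carry out the classical Krylov--Bogoliubov construction, adapted to the whole-space setting: fix $u_0 \in H^1$, form the Ces\`aro-averaged measures
$$\mu_T = \frac{1}{T}\int_0^T P_t^* \delta_{u_0}\, \d t, \qquad T>0,$$
extract a weakly convergent subsequence $\mu_{T_n}\to\nu$, and verify that $\nu$ is invariant under $P_t$. The two inputs one always needs for this scheme are (i) uniform-in-time moment bounds producing a tight family $\{\mu_T\}$, and (ii) a Feller-type continuity of $P_s$ against the topology in which tightness is obtained.

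For the moment bounds, I would apply It\^o's formula to $\|u(t)\|_{H}^{2}$ and to the energy functional associated with \eqref{eqn:Schrodinger:original}, i.e.\ a suitable combination of $\|\grad u\|_{H}^{2}$ and $\int |u|^{2\sigma+2}\d x$ (the latter with the appropriate sign dictated by $\alpha$). The linear damping $\lambda u$ generates a coercive term $-2\lambda \|u\|^2$ on the right-hand side, while Assumption \ref{cond:Q:well-posed} controls the It\^o correction from the noise and Assumption \ref{cond:sigma} (the subcritical threshold $\sigma<2/d$ in the focusing case) controls the nonlinear contribution through Gagliardo--Nirenberg. This yields
$$\sup_{t\ge 0}\E \|u(t;u_0)\|_{H^1}^{2} \le C(u_0,Q,\lambda,\sigma),$$
which, by Chebyshev, places the mass of $\mu_T$ on balls in $H^1$ uniformly in $T$.

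The main obstacle, and the reason the argument is nontrivial compared to the compact-domain case, is that balls in $H^1(\rbb^d)$ are \emph{not} precompact in the strong topology of $H^1$, since $H^1(\rbb^d)\not\hookrightarrow\!\!\!\!\!\!\!\!{}_c\; L^2(\rbb^d)$. There are two standard remedies: either work with the weak topology on $H^1$ (where bounded sets are metrizable and relatively compact), or complement the $H^1$ bound with a uniform tail estimate of the form $\sup_{t\ge 0}\E\|\mathbf{1}_{|x|>R}u(t)\|_{H^1}^2\to 0$ as $R\to\infty$. The latter would be obtained by propagating the spatial decay encoded in $Q(U)\subset H^1$ through the mild formulation together with the damping factor $e^{-\lambda t}$, exploiting that $S(t)$ is an $L^2$-isometry. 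Either route produces tightness of $\{\mu_T\}$ on a Polish space into which $H^1$ injects.

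It remains to verify invariance, which reduces to passing to the limit $T_n\to\infty$ in
$$\int_{H^1} P_sf \, \d\mu_T \;=\; \int_{H^1} f \, \d\mu_T + \frac{1}{T}\int_T^{T+s} P_tf(u_0)\,\d t \;-\; \frac{1}{T}\int_0^s P_tf(u_0)\,\d t$$
for $f\in C_b(H^1)$; the last two terms are $O(s/T)$. The delicate point, and the step I would expect to be the main obstacle, is the Feller property in the topology used: if one works weakly, then $P_s f$ must be sequentially continuous on weakly convergent bounded sequences in $H^1$, which is stronger than the strong $H^1$-continuity provided by Proposition \ref{prop:well-posed}. The standard workaround is to combine the continuity statement of Proposition \ref{prop:well-posed} with the compact embedding $H^1 \hookrightarrow\!\!\!\!\!\!\!\!{}_c\; L^2_{\text{loc}}$ and a tail bound to upgrade weak $H^1$ convergence of the initial data to strong $H^1$ convergence of the trajectories; this is exactly the ingredient carried out in \cite[Theorem 3.4]{ekren2017existence}, and once it is available the Krylov--Bogoliubov closure produces the desired invariant probability measure $\nu$.
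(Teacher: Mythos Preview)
The paper does not supply its own proof of this proposition; it simply records the statement and refers the reader to \cite[Theorem 3.4]{ekren2017existence}. Your sketch of the generalized Krylov--Bogoliubov scheme is in line with what that reference does and with the paper's own description of it in the introduction (``a generalized version of the Krylov--Bogoliubov procedure, so as to overcome the lack of Sobolev embedding''), so there is nothing to compare against here beyond noting that your outline matches the cited approach.

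One small caveat on the alternatives you list: of your two routes to tightness, the weak-topology/sequentially-weak-Feller route is the one actually taken in \cite{ekren2017existence}. The tail-estimate route you propose as an alternative is delicate for Schr\"odinger on $\rbb^d$, since the unitary group $S(t)$ is highly nonlocal and disperses mass to infinity, so propagating spatial decay of $Q$ through the mild formulation is not straightforward; the damping factor $e^{-\lambda t}$ alone does not obviously compensate for this spreading. This is not a gap in your argument since you correctly identify the weak-topology route and defer the Feller step to the cited reference, but it is worth being aware that the second option is not an easy fallback.
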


We note that the conditions for the existence of $\nu$ is compatible with those for the well-posedness results from Proposition \ref{prop:well-posed}. We now state the first main result of the paper below through Theorem \ref{thm:unique-ergodicity} establishing the uniqueness of $\nu$ under slightly stronger assumptions on $\sigma$.

\begin{theorem} \label{thm:unique-ergodicity}
Suppose that 
 
 1. in dimension $d=1,2$, Assumptions \ref{cond:Q:well-posed} and \ref{cond:sigma} hold;

 2. in dimension $d=3$, Assumptions \ref{cond:Q:well-posed} and \ref{cond:sigma} and the following extra condition hold
 \begin{align} \label{cond:sigma:unique-ergodicity}
     0<\sigma<\frac{3}{2}.
 \end{align} 
Then, for all $\lambda$ sufficiently large, $\nu$ is unique.
\end{theorem}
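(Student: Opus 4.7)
The strategy is a coupling-by-common-noise argument. For initial data $u_0^1,u_0^2\in H^1$, let $u_1(t),u_2(t)$ denote the solutions of \eqref{eqn:Schrodinger:original} driven by the \emph{same} Wiener process $W$. The goal is to show that, when $\lambda$ is sufficiently large, $\E\|u_1(t)-u_2(t)\|_H\to 0$ as $t\to\infty$, from which uniqueness follows by the standard argument: if $\nu_1,\nu_2$ are any two invariant probability measures (both supported on $H^1$ with finite moments by Section \ref{sec:moment-bound}) and $f:H\to\rbb$ is bounded and Lipschitz, then invariance gives
\begin{equation*}
\Bigl|\int f\,\d\nu_1-\int f\,\d\nu_2\Bigr|\le \textup{Lip}(f)\iint\E\|u_1(t)-u_2(t)\|_H\,\d\nu_1\,\d\nu_2\longrightarrow 0\quad\text{as }t\to\infty,
\end{equation*}
forcing $\nu_1=\nu_2$. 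The pathwise contraction \eqref{ineq:pseudo:u_1-u_2} comes from an $H$-energy estimate on $w=u_1-u_2$: the noise cancels, skew-adjointness of $\i\triangle$ removes the Laplacian term from $\tfrac{d}{dt}\|w\|_H^2$, and $\bigl||u_1|^{2\sigma}u_1-|u_2|^{2\sigma}u_2\bigr|\le C(|u_1|^{2\sigma}+|u_2|^{2\sigma})|w|$ controls the nonlinearity, yielding $\tfrac{d}{dt}\|w\|_H^2\le 2(-\lambda+C(\|u_1\|_{L^\infty}^{2\sigma}+\|u_2\|_{L^\infty}^{2\sigma}))\|w\|_H^2$; Gronwall then delivers \eqref{ineq:pseudo:u_1-u_2}.

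Everything therefore reduces to dominating, with high probability, the random time-averaged exponent in \eqref{ineq:pseudo:u_1-u_2} by a quantity strictly smaller than $\lambda$. This is the role of Lemma \ref{lem:Strichart:|u|_infty} from Appendix \ref{sec:Strichartz}: plugging the mild formula for $u$ into an interpolated Strichartz estimate for the damped Schr\"odinger propagator $e^{-\lambda t}S(t)$, and then composing with the Sobolev embedding $H^{s,p}\subset L^\infty$, yields a uniform-in-time bound of the form
\begin{equation*}
\E\|u(s)\|_{L^\infty}^{2\sigma}\le M(\lambda)\bigl(1+\E\|u_0\|_{H^1}^{p}\bigr),\qquad s\ge 0,
\end{equation*}
for some $p\ge 1$ and with $M(\lambda)\to 0$ as $\lambda\to\infty$. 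The restriction $\sigma<3/2$ in $d=3$ enters exactly at this point: it is the condition that allows the Sobolev pair $(s,p)$ to fit into an admissible Strichartz pair for $S(t)$ once the integrability gained from the damping factor $e^{-\lambda(t-s)}$ is exploited, and this is the technical improvement over the narrower range $\sigma\le(1+\sqrt{17})/4$ of \cite{brzezniak2023ergodic}. In particular, integrating against any invariant $\nu$, $\int\|u\|_{L^\infty}^{2\sigma}\,\d\nu\le CM(\lambda)$ can be made arbitrarily small by enlarging $\lambda$.

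To convert the pathwise bound into the desired $\E\|w(t)\|_H\to 0$, I would split on the event $A_t:=\bigl\{\int_0^t(\|u_1(s)\|_{L^\infty}^{2\sigma}+\|u_2(s)\|_{L^\infty}^{2\sigma})\,\d s\le \lambda t/(2C)\bigr\}$. On $A_t$, the pathwise bound \eqref{ineq:pseudo:u_1-u_2} gives $\|w(t)\|_H^2\le\|w(0)\|_H^2 e^{-\lambda t}$; on $A_t^c$, $\|w(t)\|_H$ is dominated by $\|u_1(t)\|_H+\|u_2(t)\|_H$, whose second moment is uniform in $t$ by Section \ref{sec:moment-bound}, while Markov's inequality combined with the previous display gives $\P(A_t^c)\le 2C^2M(\lambda)/\lambda$ uniformly in $t$. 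Cauchy-Schwarz then produces a single-window contraction estimate which, iterated over successive time intervals of fixed length $T$ via the Markov property together with the irreducibility conditions of Section \ref{sec:moment-bound} (used to re-localize the two solutions into a bounded set before each iteration), drives $\E\|w(nT)\|_H$ geometrically to $0$ as $n\to\infty$.

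\textbf{Main obstacle.} The main technical difficulty is the sharpened Strichartz estimate in Lemma \ref{lem:Strichart:|u|_infty} that admits the range $\sigma<3/2$ in $d=3$. This requires a careful interpolation among the dispersive decay of $S(t)$, the Sobolev embedding $H^{s,p}\subset L^\infty$, and the extra time-integrability afforded by the damping factor $e^{-\lambda(t-s)}$; without exploiting the damping, one cannot escape the $(1+\sqrt{17})/4$ threshold of \cite{brzezniak2023ergodic}. A secondary subtlety is the absence of exponential moments on $\|u\|_{L^\infty}^{2\sigma}$: only polynomial moments are available, so the Gronwall factor in \eqref{ineq:pseudo:u_1-u_2} cannot be handled by a direct exponential tilt or Girsanov argument, which is why the stopping-time dichotomy above is unavoidable.
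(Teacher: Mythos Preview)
Your pathwise estimate on $w=u_1-u_2$ is correct and matches the paper exactly. The subsequent argument, however, diverges from the paper's and contains a real gap.

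The paper does not use your $A_t/A_t^c$ dichotomy at all. Instead it passes to the ergodic decomposition, fixes two \emph{ergodic} invariant measures $\nu_1,\nu_2$, picks $u_0^i$ in the support of $\nu_i$ at which Birkhoff's ergodic theorem applies, and concludes
\[
\frac{1}{t}\int_0^t\|u_i(s)\|_{L^\infty}^{2\sigma}\,\d s\ \longrightarrow\ \int_{H}\|u\|_{L^\infty}^{2\sigma}\,\nu_i(\d u)\ <\ C_1\quad\text{a.s.}
\]
The key fact established in Lemma~\ref{lem:nu:regularity} is that $C_1$ is \emph{independent of $\lambda$}; it is not claimed, and is not needed, that this quantity tends to zero as $\lambda\to\infty$, contrary to your $M(\lambda)\to0$ formulation. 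Taking $\lambda>CC_1$ then makes the exponent in \eqref{ineq:pseudo:u_1-u_2} eventually strictly negative on a full-probability set, so $\|w(t)\|_H\to 0$ a.s., and comparing the two Birkhoff time averages of $f(u_i(\cdot))$ finishes the proof. No iteration, no irreducibility, no stopping times are used for uniqueness.

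Your route fails at the iteration step. On $A_T^c$ you bound $\E[\|w(T)\|_H\mathbf{1}_{A_T^c}]\le(\E\|w(T)\|_H^2)^{1/2}\P(A_T^c)^{1/2}$, but this right-hand side is a fixed constant $\delta>0$ depending only on the moment bounds and $\lambda$, not on $\|w(0)\|_H$. One time window therefore gives only an affine estimate $\E\|w(T)\|_H\le\gamma\,\E\|w(0)\|_H+\delta$ with $\gamma<1$, and iterating yields $\E\|w(nT)\|_H\to\delta/(1-\gamma)>0$, not zero. Relocalization via irreducibility does not remove this residual: it only keeps the initial data bounded, which is already baked into $\delta$. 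To salvage this scheme you would need the full coupling machinery of Section~\ref{sec:poly-mixing:proof-main-result} (the random indices $\ell_{\theta,\beta}$, the small-probability-of-decoupling estimates), which the paper reserves for the mixing rate in Theorem~\ref{thm:poly-mixing}; for mere uniqueness, the Birkhoff argument is both simpler and complete.
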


 The proof of Theorem \ref{thm:unique-ergodicity} follows closely the approach of \cite{brzezniak2023ergodic, glatt2021long} dealing with the same issue in dispersive dynamics with large damping. In particular, the argument for the uniqueness of $\nu$ relies on pathwise estimates on the difference between two solutions starting from two distinct initial conditions. We demonstrate that their distance  can be approximated by the regularity of invariant measures, which have uniform moment bounds independent of $\lambda$. In turn, taking $\lambda$ sufficiently large allows us to conclude that two solutions must eventually synchronize, thereby establishing the uniqueness of $\nu$. All of this will be explained in detail in Section \ref{sec:poly-mixing:proof-unique-ergodicity} where we provide the proof of Theorem \ref{thm:unique-ergodicity}.  

\begin{remark} \label{rem:unique-ergodicity} We remark that while the result of Theorem \ref{thm:unique-ergodicity} in dimension $d=1,2$ is already established in \cite{brzezniak2023ergodic}, it extends the work of \cite{brzezniak2023ergodic} in dimension $d=3$. More specifically, the unique ergodicity of \cite[Theorem 5.1]{brzezniak2023ergodic} requires that \begin{align*}
    0<\sigma\le \frac{1+\sqrt{17}}{4}, \quad d=3,
\end{align*}
    whereas in Theorem \ref{thm:unique-ergodicity}, $\sigma\in (0,3/2)$. Although this condition does not include every $\sigma\in(0,2)$ when $\alpha=-1$, cf. Assumption \ref{cond:sigma}, it is optimal in the sense that it is also required in Lemma \ref{lem:F_alpha} concerning the existence of some $p\ge1$ such that
    \begin{align*}
        \|F_\alpha(u)\|_{H^{1,p}}\lesssim \|u\|_{H^1}^{1+2\sigma},\quad u\in H^1.
    \end{align*}
    which is one of the main ingredients used to establish Theorem \ref{thm:unique-ergodicity} in Section \ref{sec:poly-mixing:proof-unique-ergodicity}.
\end{remark}

\subsection{Polynomial mixing} \label{sec:main-result:poly-mixing}
We now turn to the second main topic of the paper on the mixing property of \eqref{eqn:Schrodinger:original}. 
In order to measure the convergence rate, we will work with suitable distances and Lyapunov estimates in $H^1$. Specifically, let $d_1:H^1\times H^1\to[0,1]$ denote the distance on $H^1$ defined as
\begin{align} \label{form:d_1}
    d_1(u,v) = \|u-v\|_H\wedge 1,\quad u,\,v \in H^1.
\end{align}
A function $f:H^1\to\cbb$ is called Lipschitz with respect to $d_1$ if
\begin{align*}
    [f]_{\textup{Lip},d_1} := \sup_{u\neq v}\frac{|f(u)-f(v)|}{d_1(u,v)}<\infty.
\end{align*}
Turning to Lyapunov functions, it is well-known that 
\begin{align*}
    \varphi_\alpha(u)=\|\grad u\|^2_H-\frac{\alpha}{1+\sigma}\|u\|^{2+2\sigma}_{L^{2+2\sigma}}
\end{align*}
is invariant for the deterministic equation
\begin{align*}
    \frac{\d}{\d t}u+\i\triangle u+ \i F_\alpha(u)=0,
\end{align*}
in the sense that $$\frac{\d}{\d t}\varphi_\alpha(u)=0.$$ 
Since $\varphi_\alpha$ is not sign-definite, we will have to interpolate between $L^2$, $L^{2+2\sigma}$ and $H^1$ to guarantee the positivity of $\varphi_\alpha$. To this end, we recall that given $\sigma  d< 2(\sigma+1)$, the following Gagliardo-Nirenberg inequality holds
\begin{align*}
\|u\|_{L^{2+2\sigma}}\le C \|u\|_H^{1-\frac{\sigma d}{2+2\sigma}}\|\grad u \|_H^{\frac{\sigma d}{2+2\sigma}}.
\end{align*}
In view of Assumptions \ref{cond:sigma}, the above inequality is valid for the focusing case ($\alpha=1$) when $d=1,2,3$. Moreover, if $\sigma d<2$, using Young's inequality, we can bound $L^{2+2\sigma}$ norm as follows:
\begin{align} \label{ineq:Gagliardo}
100\|u\|_{L^{2+2\sigma}}^{2+2\sigma} \le C \|u\|_H^{2(1+\sigma)-\sigma d}\|\grad u \|_H^{\sigma d}\le  \frac{1}{2}\|\grad u\|^2_H + \frac{1}{2}\kappa\|u\|_H^{2+\frac{4\sigma}{2-\sigma d}}.
\end{align}
So, we introduce the functional $\Phi_\alpha:H^1\to \rbb$ defined as
\begin{align}\label{form:Phi}
\Phi_\alpha(u) =\begin{cases}
\|\grad u\|^2_H+\frac{1}{1+\sigma}\|u\|^{2+2\sigma}_{L^{2+2\sigma}}+\|u\|^2_H,& \alpha=-1,\\
\noalign{\vskip 7pt} \|\grad u\|^2_H-\frac{1}{1+\sigma}\|u\|^{2+2\sigma}_{L^{2+2\sigma}}+ \kappa\|u\|_H^{2\sigma_d },& \alpha=1,\,\sigma_d = 1+\frac{2\sigma}{2-\sigma d}.\\
\end{cases}
\end{align}
In particular, in the focusing case $\alpha=1$, the choice of $\Phi_\alpha$ satisfies the estimates
\begin{align} \label{ineq:Phi(u)}
c_\Phi\big(\|\grad u\|^2_H+\|u\|^{2+2\sigma}_{L^{2+2\sigma}}+\|u\|_H^{2\sigma_d} \big)\le \Phi_\alpha(u)\le C_\Phi\big(\|\grad u\|^2_H+\|u\|_H^{2\sigma_d}\big).
\end{align}

Concerning the noise, we require that $QW$ possess higher regularity compared with the condition imposed in Assumption \ref{cond:Q:well-posed}.

\begin{assumption} \label{cond:Q:poly-mixing}
The map $Q:U\to H^3$ is a Hilbert-Schmidt operator, i.e.,
\begin{align} \label{cond:Q:H3:poly-mixing}
\|Q\|^2_{L_{\textup{HS}}(U,H^3)}=  \sum_{k\ge 1}\|Qe_k\|^2_{H^3}<\infty.
\end{align}

\end{assumption}

We note that condition \eqref{cond:Q:H3:poly-mixing} is equivalent to 
\begin{align*}
 \sum_{k\ge 1}\|Qe_k\|^2_{H}+  \sum_{k\ge 1}\|\grad \triangle Qe_k\|^2_{H}<\infty . 
\end{align*}
In particular, this means that the noise term $QW(t)$ has $H^3$ sample path. It is important to point out that unique ergodicity of \eqref{eqn:Schrodinger:original} only requires $Q$ be a Hilbert-Schmidt operator in $H^1$ as stated in Theorem \ref{thm:unique-ergodicity}. Nevertheless, in order to extract the convergence rate, we will have to assume that the noise has higher regularity than $H^1$. More specifically, Assumption \ref{cond:Q:poly-mixing} is needed later for the proof of the irreducibility stated in Lemma \ref{lem:irreducibility}, which will be employed to establish the mixing result.

We now state the main result of the paper through Theorem \ref{thm:poly-mixing} below establishing the algebraic convergence rate of $P_t$ toward equilibrium.

\begin{theorem} \label{thm:poly-mixing}
Suppose that 
 
1. in dimension $d=1,2$, Assumptions \ref{cond:sigma} and \ref{cond:Q:poly-mixing} hold;

2. in dimension $d=3$, Assumptions \ref{cond:sigma} and \ref{cond:Q:poly-mixing} and the following extra condition hold
    \begin{align} \label{cond:sigma:poly-mixing}
        \frac{1}{6}<\sigma<\frac{3}{2}.
        \end{align}
    Then, for all $\lambda>0$ sufficiently large, $q>0$, $u_0^1$, $u_0^2\in H^1$, and functions $f$ that are Lipschitz with respect to distance $d_1$ defined in \eqref{form:d_1}, the following holds
    \begin{align} \label{ineq:poly-mixing}
        \big|  \E f(u(t;u_0^1)) -\E f(u(t;u_0^2)) \big|\le \frac{C_q}{(1+t)^q}\big[1+ \Phi_\alpha(u_0^1)+\Phi_\alpha(u_0^2)  \big],\quad t\ge 0,
    \end{align}
for some positive constant $C_q$ independent of $t$, $u_0^1$ and $u_0^2$. In the above, $\Phi_\alpha$ is the function defined in \eqref{form:Phi}.
\end{theorem}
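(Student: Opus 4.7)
My plan is to prove Theorem \ref{thm:poly-mixing} via a synchronous coupling argument that leverages the large damping to bypass any Girsanov change of measure. Because the noise enters additively, running the two copies $u_1(t)=u(t;u_0^1)$ and $u_2(t)=u(t;u_0^2)$ on the same probability space and driving them by the same $W$ provides a coupling of the marginals, and the bound
\begin{align*}
\big|\E f(u(t;u_0^1))-\E f(u(t;u_0^2))\big|\le [f]_{\textup{Lip},d_1}\,\E\, d_1(u_1(t),u_2(t))
\end{align*}
reduces the problem to proving polynomial decay of $\E\,d_1(u_1(t),u_2(t))$ modulated by $\Phi_\alpha(u_0^1)+\Phi_\alpha(u_0^2)$.

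The first ingredient I would set up is a family of Lyapunov-type moment bounds $\E\,\Phi_\alpha(u(t;u_0))^p\le C_p(1+e^{-c\lambda t}\Phi_\alpha(u_0)^p)$, derived by applying It\^o's formula to $\Phi_\alpha$, its powers, and its exponential (the latter only polynomially, via Gronwall) and using Assumption \ref{cond:Q:poly-mixing} to control the noise contribution. These give, uniformly in $t$, a polynomial tail for $\|u_i(t)\|_{H^1}$. The second ingredient is the pathwise contraction estimate \eqref{ineq:pseudo:u_1-u_2}: subtracting the mild formulations of $u_1$ and $u_2$ the stochastic integrals cancel; applying It\^o to $\|u_1(t)-u_2(t)\|_H^2$ one obtains
\begin{align*}
\ddt\|u_1-u_2\|_H^2\le \bigl(-2\lambda + C(\|u_1\|_{L^\infty}^{2\sigma}+\|u_2\|_{L^\infty}^{2\sigma})\bigr)\|u_1-u_2\|_H^2,
\end{align*}
since the cross term arising from the nonlinearity difference is controlled by $L^\infty$ norms through the standard inequality $||a|^{2\sigma}a-|b|^{2\sigma}b|\lesssim(|a|^{2\sigma}+|b|^{2\sigma})|a-b|$. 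Gronwall then yields \eqref{ineq:pseudo:u_1-u_2}. The third ingredient is a Strichartz-based estimate, which I would invoke from Lemma \ref{lem:Strichart:int_0^t|u|_infty^2sigma}, bounding
\begin{align*}
\int_0^t \|u(s)\|_{L^\infty}^{2\sigma}\,\d s\le K_1+K_2 t
\end{align*}
on an event of polynomially high probability in the Lyapunov moments, with $K_2$ independent of $\lambda$. Choosing $\lambda$ larger than $C(K_2+1)$ makes the exponent in \eqref{ineq:pseudo:u_1-u_2} strictly negative.

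With these in hand, I would implement the coupling construction of \cite{nguyen2024inviscid,odasso2006ergodicity}. Define iteratively stopping times $0=\tau_0<\tau_1<\dots$: at each $\tau_n$ we check whether $\Phi_\alpha(u_i(\tau_n))\le R$ for $i=1,2$ and whether a suitable Strichartz-type bound on $[\tau_n,\tau_n+T]$ holds for both solutions. On the "good" event at step $n$, the pathwise estimate drives $\|u_1(\tau_n+T)-u_2(\tau_n+T)\|_H\le e^{-\gamma T}$, which, once small enough, triggers the "coupling succeeds" regime, and we would then show (using the same contraction plus moment control) that distance remains small for all later times with high probability. On the "bad" event, we use the Markov property and the uniform polynomial moment bounds to wait a controlled time before attempting again. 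Summing the geometric series of success probabilities against the polynomial tails of return times produces the tail estimate $\P(d_1(u_1(t),u_2(t))>\epsilon)\le C_q(1+\Phi_\alpha(u_0^1)+\Phi_\alpha(u_0^2))(1+t)^{-q}$ for any $q>0$, with constants depending on $q$ and $R$ chosen large enough.

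The main obstacle, and the place where the restriction $\sigma>1/6$ in $d=3$ and the use of Assumption \ref{cond:Q:poly-mixing} enter, is controlling $\int_0^t\|u(s)\|_{L^\infty}^{2\sigma}\,\d s$ so that the deterministic average $K_2$ is small enough to be absorbed by the damping $\lambda$, while simultaneously obtaining polynomial (not merely in-probability) bounds on the exceptional event. Here I would combine the dispersive smoothing of Lemma \ref{lem:Strichartz:S(t)u}, the Sobolev embedding $H^{s,p}\subset L^\infty$ for $s>d/p$, and the mild formulation to trade regularity of $QW$ (guaranteed to be $H^3$-valued by Assumption \ref{cond:Q:poly-mixing}) for $L^\infty$ control of $u$, with the remaining deterministic terms handled by the Lyapunov bound. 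Compared with the unique-ergodicity proof of Theorem \ref{thm:unique-ergodicity}, where only a qualitative bound on the $L^\infty$ time-average was needed, quantitatively controlling its fluctuations for the coupling is the technical core of Theorem \ref{thm:poly-mixing}.
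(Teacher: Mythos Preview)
Your proposal is correct and follows essentially the same route as the paper: synchronous coupling with common noise (so that no Girsanov transform is needed), the pathwise $L^\infty$-contraction \eqref{ineq:|u_1-u_2|}, the Strichartz control of Lemma \ref{lem:Strichart:int_0^t|u|_infty^2sigma}, and the iterated coupling/decoupling scheme of \cite{debussche2005ergodicity,odasso2006ergodicity,nguyen2024inviscid}. The paper formalizes the argument by introducing the discrete random variable $\ell_{\theta,\beta}(k)$ of Definition \ref{def:ell_beta} and verifying the abstract conditions of \cite[Theorem 2.9]{debussche2005ergodicity} via Lemmas \ref{lem:ergodicity:P(d_1(u_1,u_2)>t^-q)}--\ref{lem:ergodicity:ell(k+1).neq.l|ell(k)=l)<1/2(1+(k-l)T)^-q}; the only ingredient you do not name explicitly is the irreducibility Lemma \ref{lem:irreducibility}, used to show that from the recurrence set $\{\Phi_\alpha\le R\}$ the pair reaches the small ball $\{\Phi_\alpha\le\beta\}$ (where the contraction is launched) with uniformly positive probability.
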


\begin{remark} \label{rem:sigma:defocusing}
1. We note that while condition $\sigma>0$, $d=1,2$, is optimal for the mixing property \eqref{ineq:poly-mixing}, the restriction $\sigma\in(1/6,3/2)$ when $d=3$ is perhaps far from optimality. In particular, the mixing rate issue for $\sigma\in(0,1/6]$ when $d=3$, $\alpha\in\{-1,1\}$ remains an open problem.

2. We also would like to mention that Theorem \ref{thm:poly-mixing} does not include the linear instance $\sigma=0$. This is thanks to the fact that when $\sigma=0$, the convergence rate is simply exponentially fast regardless of the value of $\lambda>0$. Indeed, this can be verified by a routine calculation
\begin{align*}
    \frac{\d}{\d t} \| u(t;u_0^1)-u(t;u_0^2) \|^2_H +2\lambda\| u(t;u_0^1)-u(t;u_0^2) \|^2_H= 0,
\end{align*}
which implies that
\begin{align*}
    \E \| u(t;u_0^1)-u(t;u_0^2) \|^2_H = e^{-2\lambda t} \| u_0^1-u_0^2 \|^2_H,\quad t\ge 0.
\end{align*}
\end{remark}

The proof of Theorem \ref{thm:poly-mixing} makes use of the coupling framework developed in \cite{debussche2005ergodicity,odasso2006ergodicity} tailored to our settings of unbounded domains. The strategy consists of three crucial ingredients: the high probability the solutions become coupled in a ball, the likeliness they stay in close proximity of one another, and lastly, the small probability of decoupling. Notably, the arguments of \cite{debussche2005ergodicity,odasso2006ergodicity,nguyen2024inviscid} rely on a stochastic Foias-Prodi estimate controlling the high frequencies by the low modes, so as to achieve the coupling effect. Since we are dealing with unbounded domains, which is not convenient dealing with Fourier decompositions, the main difference of our approach from these works is the Strichartz estimates, allowing for controlling the distance between two solutions. All of this will be explained in detail in Section \ref{sec:poly-mixing:proof-main-result} where the proof of Theorem \ref{thm:poly-mixing} is supplied.

\section{Moment estimates} \label{sec:moment-bound}

Throughout the rest of the paper, $c$ and $C$ denote generic positive constants that may change from line to line. The main parameters that they depend on will appear between parenthesis, e.g., $c(T,q)$ is a function of $T$ and $q$.

In this section, we collect useful estimates on the solutions of \eqref{eqn:Schrodinger:original} that will be employed later in Section \ref{sec:poly-mixing} to establish the main theorems. We start with two results below in Lemma \ref{lem:moment-bound:H} and Lemma \ref{lem:moment-bound:H1} respectively giving moment bounds in $H$ and $H^1$ norms.

\begin{lemma} \label{lem:moment-bound:H}
Under Assumptions \ref{cond:Q:well-posed} and \ref{cond:sigma}, for all $u_0\in L^2(\Omega;H^1)$ and $n\ge 1$, the following holds:
\begin{align} \label{ineq:moment-bound:H}
\E \|u(t)\|^{2n}_{H} \le e^{-n \lambda t}\E\|u_0\|^{2n}_H + C_{0,n}\frac{\|Q\|^{2n}_{L_{\HS}(U;H)}}{\lambda^{n}},\quad t\ge 0,
\end{align}
for some positive constants $C_{0,n}$ independent of $u_0$, $t$, $\lambda$ and $Q$.

\end{lemma}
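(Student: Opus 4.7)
The plan is to apply It\^o's formula to the functional $V(u) = \|u\|_H^{2n}$ along the solution of \eqref{eqn:Schrodinger:original}, exploit the Hamiltonian structure of the dispersive and nonlinear terms, and then close the estimate by Young's inequality and a Gr\"onwall-type step. Strictly speaking, since \eqref{eqn:Schrodinger:original} is understood in the mild sense, the application of It\^o must be justified through a standard approximation scheme (e.g.\ a Galerkin truncation, or a regularized version of $F_\alpha$ followed by a passage to the limit using the continuity in Proposition \ref{prop:well-posed}); granted this, all the formal manipulations below go through.

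First I would compute the evolution of $\|u(t)\|_H^2$. The Fr\'echet derivative of $u\mapsto\|u\|_H^2$ is $2u$ and its Hessian is $2I$. The key observation is that $\i\triangle$ is skew-symmetric on $H$ and that $\langle\i\alpha|u|^{2\sigma}u,u\rangle_H=-\alpha\Im\int_{\rbb^d}|u|^{2\sigma+2}\d x=0$, so neither the dispersive nor the nonlinear contribution appears in the $H$-energy balance. What remains is the damping term and the quadratic variation of $Q\d W$, yielding
\begin{align*}
\d\|u(t)\|_H^2+2\lambda\|u(t)\|_H^2\,\d t = \|Q\|_{L_{\HS}(U;H)}^2\,\d t+\d M_t,
\end{align*}
where $M_t$ is a (local) martingale whose quadratic variation is controlled by $C\|u(t)\|_H^2\|Q\|_{L_{\HS}(U;H)}^2\,\d t$.

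Next I would apply It\^o's formula to $V(u)=\|u\|_H^{2n}$, i.e.\ to the map $x\mapsto x^n$ composed with $\|u\|_H^2$. This produces the drift $-2n\lambda\|u\|_H^{2n}$ from the damping, together with an It\^o correction of the form $n\|u\|_H^{2n-2}\|Q\|_{L_{\HS}(U;H)}^2$ coming from $V'$, and a similar term $n(n-1)\|u\|_H^{2n-4}\cdot\frac{\d}{\d t}\langle M\rangle_t$ coming from $V''$, the latter absorbed into $C_n\|u\|_H^{2n-2}\|Q\|_{L_{\HS}(U;H)}^2$ by the quadratic variation bound. Taking expectations (the martingale part disappears after a localization argument using the $H^1$ moment bounds, and is then removed in the limit) gives
\begin{align*}
\tfrac{\d}{\d t}\E\|u(t)\|_H^{2n}\le -2n\lambda\,\E\|u(t)\|_H^{2n}+C_n\,\E\bigl[\|u(t)\|_H^{2n-2}\|Q\|_{L_{\HS}(U;H)}^2\bigr].
\end{align*}

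The final step is to close the differential inequality. I would apply Young's inequality in the form $ab\le \varepsilon a^{n/(n-1)}+C_{n,\varepsilon}b^n$ to $a=\|u\|_H^{2(n-1)}$ and $b=\|Q\|_{L_{\HS}(U;H)}^2$, and then choose $\varepsilon$ of order $\lambda$ so that the $\E\|u\|_H^{2n}$ term on the right-hand side is absorbed into half of the damping term. The residual constant scales like $\|Q\|_{L_{\HS}(U;H)}^{2n}/\lambda^{n-1}$, so that
\begin{align*}
\tfrac{\d}{\d t}\E\|u(t)\|_H^{2n}\le -n\lambda\,\E\|u(t)\|_H^{2n}+\frac{C_n'}{\lambda^{n-1}}\|Q\|_{L_{\HS}(U;H)}^{2n}.
\end{align*}
A direct Gr\"onwall integration with integrating factor $e^{n\lambda t}$ then yields \eqref{ineq:moment-bound:H} with $C_{0,n}=C_n'/n$, as the $1/(n\lambda)$ from the time integration converts the $\lambda^{-(n-1)}$ into $\lambda^{-n}$.

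The main obstacle is mostly bookkeeping: tracking constants carefully enough to obtain the sharp $\lambda^{-n}$ dependence (this is what forces the specific choice of $\varepsilon\sim\lambda$ in Young's inequality), and justifying the application of It\^o to a non-smooth equation in the whole space. The latter is standard but needs an approximation argument to rule out any contribution from the dispersive or nonlinear terms to the $H$-balance; no new idea is required beyond what is already available through Proposition \ref{prop:well-posed}.
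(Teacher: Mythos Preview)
Your proposal is correct and follows essentially the same route as the paper: apply It\^o's formula to $\|u\|_H^{2n}$ (noting that the dispersive and nonlinear terms give no contribution to the $H$-energy balance), absorb the It\^o correction terms into the damping via Young's inequality with $\varepsilon\sim\lambda$ to obtain the $\lambda^{-(n-1)}$ scaling, and conclude by Gr\"onwall. The paper's argument is exactly this, carried out at the level of the stochastic differential before taking expectations, without an explicit discussion of the approximation step you mention.
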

\begin{proof}
Firstly, we apply It\^o's formula to $\|u\|^2_H$ and obtain
\begin{align*}
\d \|u\|^2_H & = -2\lambda\|u\|^2_H \d t +\sum_{k\ge 1}\|Qe_k\|^2_H\d t +2\Re \big( \la u,Q\d W\ra_H\big).
\end{align*}
Likewise, for $n>1$, we have
\begin{align} \label{eqn:d|u|^2n_H}
\d \|u\|^{2n}_H&= n\|u\|^{2(n-1)}\d \|u\|^2_H+\frac{1}{2}n(n-1)\|u\|^{2(n-2)}_H\la \d \|u\|^2_H,\d \|u\|^2_H\ra \notag \\
& = -2n\lambda\|u\|^{2n}_H\d t + n\|u\|^{2(n-1)}_H\|Q\|_{L_{\HS}(U;H)}^2\d t + 2n\|u\|^{2(n-1)}_H\Re\big(\la u,Q\d W\ra_H\big) \notag \\
&\qquad +2n(n-1)\|u\|^{2(n-2)}_H\sum_{k\ge 1}\big|\Re\big(\la u,Qe_k\ra_H\big) \big|^2\d t.
\end{align}
Using Holder's and Young's inequalities, we infer
\begin{align*}
\sum_{k\ge 1}\big|\Re\big(\la u,Qe_k\ra_H\big) \big|^2 \le \|u\|^2_H \sum_{k\ge 1}\| Qe_k\|_H^2 = \|u\|_H^2\|Q\|^2_{L_{\HS}(U;H)},
\end{align*}
and that
\begin{align*}
&n\|u\|^{2(n-1)}_H\|Q\|_{L_{\HS}(U,H)}^2+ 2n(n-1)\|u\|^{2(n-2)}_H\sum_{k\ge 1}\big|\Re\big(\la u,Qe_k\ra_H\big) \big|^2\\
&\qquad\le \frac{1}{100}n\lambda\|u\|_H^{2n}+ c\frac{\|Q\|^{2n}_{L_{\HS}(U;H)}}{\lambda^{n-1}}.
\end{align*}
In the last estimate, $c=c(n)$ is a positive constant independent of $\lambda$. It follows from \eqref{eqn:d|u|^2n_H} that
\begin{align} \label{ineq:d|u|^2n_H}
\d \|u\|^{2n}_H \le  -n\lambda \|u\|^{2n}_H\d t+ c\frac{\|Q\|^{2n}_{L_{\HS}(U;H)}}{\lambda^{n-1}}\d t + 2n\|u\|^{2(n-1)}_H\Re\big(\la u,Q\d W\ra_H\big).
\end{align}
As a consequence, we take expectation on both sides of \eqref{ineq:d|u|^2n_H} and obtain
\begin{align*}
\frac{\d}{\d t}\E\|u(t)\|^{2n}_H \le -n\lambda \E\|u(t)\|^{2n}_H+ c\frac{\|Q\|^{2n}_{L_{\HS}(U;H)}}{\lambda^{n-1}}.
\end{align*}
This produces \eqref{ineq:moment-bound:H}, as claimed.

\end{proof}

\begin{lemma} \label{lem:moment-bound:H1}

Under Assumptions \ref{cond:Q:well-posed} and \ref{cond:sigma}, for all $u_0\in L^2(\Omega;H^1)$ and $n\ge 1$, there exist positive constants $c_{1,n}$ and $C_{1,n}$ independent of $u_0$, $t$, $\lambda$ and $Q$ such that the followings hold:

1. Defocusing case $\alpha=-1$:
\begin{align} \label{ineq:moment-bound:E.Phi(u(t))^n<e^(-lambda.t)E.Phi(u_0)+C:defocusing}
\E \Phi _{-1} (u(t))^{n} \le e^{-c_{1,n} \lambda t}\E \Phi_{-1}(u_0)^{n} + \frac{C_{1,n}}{\lambda^{n}}\|Q\|^{2n}_{L_{\HS}(U;H^1)} + \frac{C_{1,n}}{\lambda^{ n(1+\sigma)}}\|Q\|^{2n(1+\sigma)}_{L_{\HS}(U;H^1)},\quad t\ge 0,
\end{align}
where $\Phi_{-1}$ is the functional defined in \eqref{form:Phi} with $\alpha=-1$.

2. Focusing case $\alpha=1$:
\begin{align} \label{ineq:moment-bound:E.Phi(u(t))^n<e^(-lambda.t)E.Phi(u_0)+C:focusing}
\E \Phi_1(u(t))^{n} &\le e^{-c_{1,n} \lambda t}\E \Phi_1(u_0)^{n} + \frac{C_{1,n}}{\lambda^{n}}\|Q\|^{2n}_{L_{\HS}(U;H^1)} + \frac{C_{1,n}}{\lambda^{ n(1+\sigma)}}\|Q\|^{2n(1+\sigma)}_{L_{\HS}(U;H^1)}\notag \\
&\qquad+\frac{C_{1,n}}{\lambda^{n\sigma_d}}\|Q\|^{2n\sigma_d}_{L_{\HS}(U;H)},\quad t\ge 0.
\end{align}
In the above, $\Phi_1$ and $\sigma_d$ are defined in \eqref{form:Phi} with $\alpha=1$.

\end{lemma}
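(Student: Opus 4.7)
The strategy is to apply It\^o's formula to $\Phi_\alpha(u(t))^n$ and invoke a Gronwall argument, exploiting the Hamiltonian invariance of $\varphi_\alpha(u) = \|\nabla u\|_H^2 - \frac{\alpha}{1+\sigma}\|u\|_{L^{2+2\sigma}}^{2+2\sigma}$ noted in the discussion preceding \eqref{form:Phi}. Because the dispersive and semilinear terms in the dynamics preserve $\varphi_\alpha$, applying It\^o along the full SPDE leaves only contributions from the damping $-\lambda u\,dt$ and from the noise $Q\,dW$. The damping contribution is computed by pairing the Fr\'echet derivative $D\varphi_\alpha(u) = -2\Delta u - 2\alpha |u|^{2\sigma}u$ with $-\lambda u$ to yield the drift $-2\lambda\bigl(\|\nabla u\|_H^2 - \alpha\|u\|_{L^{2+2\sigma}}^{2+2\sigma}\bigr)dt$. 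The noise produces a martingale together with an It\^o correction controlled by $C\|Q\|_{L_{\HS}(U;H^1)}^2 + C\sum_k\int|u|^{2\sigma}|Qe_k|^2$. For the nonlinear piece, H\"older's inequality with conjugate exponents $(1+\sigma)/\sigma$ and $1+\sigma$, followed by the Sobolev embedding $H^1\hookrightarrow L^{2+2\sigma}$ valid for $d\le 3$ under Assumption~\ref{cond:sigma}, yields a bound by $C\|u\|_{L^{2+2\sigma}}^{2\sigma}\|Q\|_{L_{\HS}(U;H^1)}^2$, which Young's inequality further splits into $\varepsilon\|u\|_{L^{2+2\sigma}}^{2+2\sigma}$ plus a pure noise contribution proportional to $\|Q\|_{L_{\HS}(U;H^1)}^{2(1+\sigma)}/\lambda^{1+\sigma}$. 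This accounts for the second noise term appearing in both \eqref{ineq:moment-bound:E.Phi(u(t))^n<e^(-lambda.t)E.Phi(u_0)+C:defocusing} and \eqref{ineq:moment-bound:E.Phi(u(t))^n<e^(-lambda.t)E.Phi(u_0)+C:focusing}.

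In the defocusing case $\alpha = -1$, the damping drift $-2\lambda(\|\nabla u\|_H^2 + \|u\|_{L^{2+2\sigma}}^{2+2\sigma})$ is already coercive, and combining the It\^o formula for $\varphi_{-1}(u)$ with the equation for $\|u\|_H^2$ from the proof of Lemma~\ref{lem:moment-bound:H} yields $d\Phi_{-1}(u)\le -c\lambda\Phi_{-1}(u)\,dt + C\,dt + d\mathcal{M}_t$ after absorbing the Young remainders into a small fraction of the damping. In the focusing case $\alpha = 1$, the damping drift $-2\lambda(\|\nabla u\|_H^2 - \|u\|_{L^{2+2\sigma}}^{2+2\sigma})$ is no longer sign-definite. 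Here, the Gagliardo-Nirenberg inequality \eqref{ineq:Gagliardo}, available since $\sigma d < 2$ under Assumption~\ref{cond:sigma}, is used to dominate the positive part $\|u\|_{L^{2+2\sigma}}^{2+2\sigma}$ by a small multiple of $\|\nabla u\|_H^2 + \kappa\|u\|_H^{2\sigma_d}$. An additional It\^o computation on $\kappa\|u\|_H^{2\sigma_d}$, which is parallel to Lemma~\ref{lem:moment-bound:H} with the exponent $\sigma_d$ in place of $n$, then restores coercivity of $\Phi_1$ via \eqref{ineq:Phi(u)} and produces the extra noise contribution $\|Q\|_{L_{\HS}(U;H)}^{2\sigma_d}/\lambda^{\sigma_d}$ found in \eqref{ineq:moment-bound:E.Phi(u(t))^n<e^(-lambda.t)E.Phi(u_0)+C:focusing}.

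To pass from a single-moment estimate to the full $n$-th moment, one applies It\^o directly to $\Phi_\alpha(u)^n$; the cross terms of the form $\Phi_\alpha^{n-1}\cdot(\text{noise correction})$ and $\Phi_\alpha^{n-2}\cdot(\text{quadratic variation})$ are again handled by repeated Young's inequality in order to absorb them into a small multiple of $\lambda\Phi_\alpha^n$, leaving a noise remainder of the claimed form $\|Q\|^{2n(1+\sigma)}/\lambda^{n(1+\sigma)}$. Taking expectation eliminates the martingale and Gronwall's lemma delivers the stated bounds with decay rate $e^{-c_{1,n}\lambda t}$. The principal technical difficulty lies in controlling the nonlinear It\^o correction $\sum_k\int|u|^{2\sigma}|Qe_k|^2$ uniformly in $\lambda$: one must distribute powers carefully between $\|u\|_{L^{2+2\sigma}}$ and $\|Q\|$ via Young's inequality so that (i) the $u$-dependent remainder is strictly dominated by the damping drift, (ii) the $Q$-dependent remainder has the exact $\lambda^{-n(1+\sigma)}$ scaling recorded in the statement, and (iii) the constants $c_{1,n}$ and $C_{1,n}$ remain independent of $\lambda$. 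In the focusing case, one must additionally choose a common decay rate for the $H^1$-energy and the $\|u\|_H^{2\sigma_d}$-term so that they combine cleanly through the coercivity bound \eqref{ineq:Phi(u)}.
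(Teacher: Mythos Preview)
Your proposal is correct and follows essentially the same approach as the paper's proof: It\^o's formula applied to $\Phi_\alpha$ (exploiting that the Schr\"odinger flow conserves $\varphi_\alpha$ so only damping and noise contribute), the H\"older--Sobolev--Young chain for the nonlinear It\^o correction $\sum_k\langle |u|^{2\sigma},|Qe_k|^2\rangle$, Gagliardo--Nirenberg \eqref{ineq:Gagliardo} to restore coercivity in the focusing case via \eqref{ineq:Phi(u)}, and then It\^o on $\Phi_\alpha^n$ with repeated Young's inequality to absorb cross terms before Gronwall. One minor bookkeeping point: at the level of $d\Phi_\alpha(u)$ the Young remainder carries $\lambda^{-\sigma}$ rather than $\lambda^{-(1+\sigma)}$ (the extra factor of $\lambda^{-1}$ appears only after integrating the differential inequality), but this does not affect the argument or the final exponents.
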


\begin{proof}
We first consider the defocusing case $\alpha=-1$, as it is relatively simpler than the focusing case. 

By It\^o's formula, we have
\begin{align} \label{eqn:d.Phi(u):defocusing}
\d \Phi_{-1}(u)& =  -2\lambda \big[\|\grad u\|^2_{H}+\|u\|^{2+2\sigma}_{L^{2+2\sigma}}+\|u\|^2_H \big]\d t +\sum_{k\ge 1}\|\grad Qe_k\|^2_H\d t+\sum_{k\ge 1}\|Qe_k\|^2_H\d t+ \d M_{-1} \notag  \\
&\qquad + \sum_{k\ge 1}\la |u|^{2\sigma},|Qe_k|^2\ra_H\d t + 2\sigma\sum_{k\ge 1}\big\la |u|^{2\sigma-2},\big|\Re\big(u \overline{Qe_k}\big)\big|^2\big\ra_H\d t,
\end{align}
where the semi Martingale process $M_{-1}$ (corresponding to $\alpha=-1$) is given by
\begin{align} \label{form:dM_(-1)}
\d M_{-1} = 2\Re\big( \la \grad u,\grad Q\d W\ra_H\big)\d t + 2\Re\big(\la |u|^{2\sigma}u,Q\d W\ra_H\big)\d t+2\Re \big( \la u,Q\d W\ra_H\big)\d t.
\end{align}
Using Holder's and Young's inequalities and the embedding $H^1\subset L^{2+2\sigma}$, we estimate
\begin{align*}
&\sum_{k\ge 1} \la |u|^{2\sigma},|Qe_k|^2\ra_H + 2\sigma\sum_{k\ge 1} \big\la |u|^{2\sigma-2},\big|\Re\big(u \overline{Qe_k}\big)\big|^2\big\ra_H \\
&\le   (1+2\sigma) \sum_{k\ge 1}  \la |u|^{2\sigma},|Qe_k|^2\ra_H\\
& \le  (1+2\sigma) \|u\|^{2\sigma}_{L^{2+2\sigma}}  \sum_{k\ge 1}  \|Qe_k\|^{2}_{L^{2+2\sigma}}.\\
&\le c \|u\|^{2\sigma}_{L^{2+2\sigma}} \|Q\|_{L_{\HS}(U;H^1)}^2,
\end{align*}
whence by Young's inequality
\begin{align} \label{ineq:<u^(2sigma),Qe_k>}
&\sum_{k\ge 1} \la |u|^{2\sigma},|Qe_k|^2\ra_H + 2\sigma\sum_{k\ge 1} \big\la |u|^{2\sigma-2},\big|\Re\big(u \overline{Qe_k}\big)\big|^2\big\ra_H \notag \\
&\le \frac{1}{100}\lambda\|u\|^{2+2\sigma}_{L^{2+2\sigma}} + \frac{c}{\lambda^\sigma}\|Q\|_{L_{\HS}(U;H^1)}^{2+2\sigma}.
\end{align}
In the above, $c$ is a positive constant independent of $\lambda$, $u$ and $Q$. It follows that
\begin{align} \label{ineq:dPhi(u):defocusing}
\d \Phi_{-1}(u) 
& \le  -2\lambda \|\grad u\|^2_{H}\d t-\lambda\|u\|^{2+2\sigma}_{L^{2+2\sigma}}\d t -2\lambda \| u\|^2_{H}\d t+ \|Q\|^2_{L_{\HS}(U;H^1)}\d t \notag \\
&\qquad+\frac{c}{\lambda^\sigma}\|Q\|_{L_{\HS}(U;H^1)}^{2+2\sigma}\d t+ \d M_{-1} \notag \\
&\le -\lambda \Phi_{-1}(u) \d t + \|Q\|^2_{L_{\HS}(U;H^1)}\d t+\frac{c}{\lambda^\sigma}\|Q\|_{L_{\HS}(U;H^1)}^{2+2\sigma}\d t+ \d M_{-1}.
\end{align}

Next, for every $n> 1$, we compute
\begin{align*}
\d \Phi_{-1}(u)^n & = n \Phi_{-1}(u)^{n-1}\d \Phi_{-1}(u) + \frac{1}{2}n(n-1)\Phi_{-1}(u)^{n-2} \la \d \Phi_{-1}(u),\d\Phi_{-1}(u)\ra\\
&= n \Phi_{-1}(u)^{n-1}\d \Phi(u) + \frac{1}{2}n(n-1)\Phi_{-1}(u)^{n-2} \d\la M_{-1}\ra,
\end{align*}
where $\la M_{-1}\ra(t)$ is the quaratic variation process associated with $M_{-1}(t)$ defined in \eqref{form:dM_(-1)}. In particular, we have
\begin{align*}
\d \la M_{-1}\ra& =4 \sum_{k\ge 1}\big|\Re\big(\la \grad u, \grad Qe_k\ra_H+\la |u|^{2\sigma}u,Qe_k\ra_H+\la u,Qe_k\ra_H\big)\big|^2 \d t\\
&\le 8 \sum_{k\ge 1} \Big(\|\grad u\|^2_H\|\grad Qe_k\|^2_H+\|u\|^{4\sigma +2}_{L^{2+2\sigma}}\|Qe_k\|^{2}_{L^{2+2\sigma}} +\|u\|^2_H\|Qe_k\|^2_H \Big)\d t\\
&\le c\big( \|\grad u\|^2_H+\|u\|^{4\sigma +2}_{L^{2+2\sigma}}+\|u\|^2_H \big)\|Q\|^2_{L_{\HS}(U;H^1)}\d t.
\end{align*}
From expression \eqref{form:Phi} in the defocusing case $\alpha=-1$, we infer
\begin{align} \label{ineq:d<M_(-1)>}
\d \la M_{-1}\ra &\le c \big(\Phi_{-1}(u)+\Phi_{-1}(u)^{2-\frac{1}{1+\sigma}} \big)\|Q\|^2_{L_{\HS}(U;H^1)}\d t.
\end{align}
This together with \eqref{ineq:dPhi(u):defocusing} produces the estimate
\begin{align*}
\d \Phi_{-1}(u)^n & \le  n\Phi_{-1}(u)^{n-1} \Big(-\lambda \Phi_{-1}(u) \d t + \|Q\|^2_{L_{\HS}(U;H^1)}\d t+\frac{c}{\lambda^\sigma}\|Q\|_{L_{\HS}(U;H^1)}^{2+2\sigma}\d t+ \d M_{-1}\Big)\\
&\qquad + c\,n(n-1)\big(\Phi_{-1}(u)^{n-1}+\Phi_{-1}(u)^{n-\frac{1}{1+\sigma}}\big)\|Q\|^2_{L_{\HS}(U;H^1)}\d t.
\end{align*}
We repeatedly employ Young's inequality to infer
\begin{align*}
&\big[n + c\,n(n-1)\big]\Phi_{-1}(u)^{n-1}\|Q\|^2_{L_{\HS}(U;H^1)}  + cn\Phi_{-1}(u)^{n-1}\cdot\frac{1}{\lambda^\sigma}\|Q\|^{2+2\sigma}_{L_{\HS}(U;H^1)}\\
&\qquad\qquad +c\,n(n-1)\Phi_{-1}(u)^{n-\frac{1}{1+\sigma}}\|Q\|^2_{L_{\HS}(U;H^1)}\\
 & \le \frac{1}{100}\lambda\Phi_{-1}(u)^n + \frac{c}{\lambda^{n-1}}\|Q\|^{2n}_{L_{\HS}(U;H^1)}+ \frac{c}{\lambda^{ n\sigma+n-1}}\|Q\|^{2n(1+\sigma)}_{L_{\HS}(U;H^1)}.
\end{align*}
It follows that
\begin{align} \label{ineq:d.Phi(u)^n:defocusing}
\d \Phi_{-1}(u)^n & \le -\frac{1}{2}n\lambda \Phi_{-1}(u)^n\d t+ n\Phi_{-1}(u)^n \d M_{-1}(t) +\frac{c}{\lambda^{n-1}}\|Q\|^{2n}_{L_{\HS}(U;H^1)}\d t \notag \\
&\qquad+ \frac{c}{\lambda^{ n\sigma+n-1}}\|Q\|^{2n(1+\sigma)}_{L_{\HS}(U;H^1)}\d t .
\end{align}
We take expectation on both sides to deduce the bound
\begin{align*}
\frac{\d}{\d t}\E\, \Phi_{-1}(u(t))^n \le -\frac{1}{2}n\lambda \E\,\Phi_{-1}(u)^n+\frac{c}{\lambda^{n-1}}\|Q\|^{2n}_{L_{\HS}(U;H^1)} + \frac{c}{\lambda^{ n\sigma+n-1}}\|Q\|^{2n(1+\sigma)}_{L_{\HS}(U;H^1)},
\end{align*}
whence
\begin{align*}
\E\, \Phi_{-1}(u(t))^n  \le e^{-\frac{1}{2}n\lambda t}\E\, \Phi_{-1}(u_0)^n + \frac{c}{\lambda^{n}}\|Q\|^{2n}_{L_{\HS}(U;H^1)} + \frac{c}{\lambda^{ n(1+\sigma)}}\|Q\|^{2n(1+\sigma)}_{L_{\HS}(U;H^1)}.
\end{align*}
This produces \eqref{ineq:moment-bound:E.Phi(u(t))^n<e^(-lambda.t)E.Phi(u_0)+C:defocusing} for the defocusing case $\alpha=-1$.

Turning to the focusing case $\alpha=1$, similar to \eqref{eqn:d.Phi(u):defocusing}, it holds that
\begin{align*}
\d \Phi_1(u)& =  -2\lambda \big[\|\grad u\|^2_{H}-\|u\|^{2+2\sigma}_{L^{2+2\sigma}}+\kappa\sigma_d\|u\|^{2\sigma_d}_H \big]\d t +\sum_{k\ge 1}\|\grad Qe_k\|^2_H\d t  \\
&\qquad + \sum_{k\ge 1}\la |u|^{2\sigma},|Qe_k|^2\ra_H\d t + 2\sigma\sum_{k\ge 1}\big\la |u|^{2\sigma-2},\big|\Re\big(u \overline{Qe_k}\big)\big|^2\big\ra_H\d t\\
&\qquad +2\sigma_d(\sigma_d-1)\kappa\|u\|^{2(\sigma_d-2)}_H\sum_{k\ge 1}\big|\Re\big(\la u,Qe_k\ra_H\big) \big|^2\d t\\
&\qquad+ \kappa\sigma_d\|u\|^{2(\sigma_d-1)}_H\|Q\|_{L_{\HS}(U;H)}^2\d t  +\d M_1.
\end{align*}
where the semi Martingale $M_1$ (corresponding to $\alpha=1$) is given by
\begin{align} \label{form:M_1}
\d M_1 &=  2\Re\big( \la \grad u,\grad Q\d W\ra_H\big)\d t+ 2\Re\big(\la |u|^{2\sigma}u,Q\d W\ra_H\big) \d t \notag \\
&\qquad+ 2\sigma_d\kappa\|u\|^{2(\sigma_d-1)}_H\Re\big(\la u,Q\d W\ra_H\big)\d t.
\end{align}
We combine the estimate from \eqref{ineq:d|u|^2n_H} (with $n=\sigma_d$) and \eqref{ineq:<u^(2sigma),Qe_k>} to deduce the bound
\begin{align*}
\d \Phi_1(u)& \le -2\lambda \|\grad u\|^2_{H}\d t+2\frac{1}{100}\lambda\|u\|^{2+2\sigma}_{L^{2+2\sigma}}\d t +\|Q\|^2_{L_{\HS}(U;H^1)}\d t+ \frac{C}{\lambda^\sigma}\|Q\|_{L_{\HS}(U;H^1)}^{2+2\sigma}\d t \notag  \\
&\qquad -\sigma_d\lambda\kappa \|u\|^{2\sigma_d}_H\d t+ C\frac{\|Q\|^{2\sigma_d}_{L_{\HS}(U;H)}}{\lambda^{\sigma_d-1}}\d t +\d M_1,
\end{align*}
In view of \eqref{ineq:Gagliardo} and \eqref{ineq:Phi(u)}, we infer
\begin{align*}
-2\lambda \|\grad u\|^2_{H}\d t+2\frac{1}{100}\lambda\|u\|^{2+2\sigma}_{L^{2+2\sigma}}\d t  -\sigma_d\lambda\kappa \|u\|^{2\sigma_d}_H\le - c\,\lambda  \Phi_1(u),
\end{align*}
whence
\begin{align} \label{ineq:d.Phi(u):focusing}
\d \Phi_1(u) &\le -c \lambda \Phi_1(u)\d t +\|Q\|^2_{L_{\HS}(U;H^1)}\d t+ \frac{C}{\lambda^\sigma}\|Q\|_{L_{\HS}(U;H^1)}^{2+2\sigma}\d t+ C\frac{\|Q\|^{2\sigma_d}_{L_{\HS}(U;H)}}{\lambda^{\sigma_d-1}}\d t +\d M_1.
\end{align}
Next, considering $n>1$, we have
\begin{align} \label{eqn:d.Phi(u)^n:focusing}
\d \Phi_1(u)^n & =n \Phi_1(u)^{n-1}\d \Phi_1(u) + \frac{1}{2}n(n-1)\Phi_1(u)^{n-2} \d\la M_1\ra.
\end{align}
Concerning the first term on the above right-hand side, in view of \eqref{ineq:d.Phi(u):focusing}, the following holds
\begin{align*}
&n \Phi_1(u)^{n-1}\d \Phi_1(u)  \le -c n \lambda \Phi_1(u)^{n}\d t+ n \Phi_1(u)^{n-1}\d M_1\\
&\qquad +n \Phi_1(u)^{n-1}\Big(\|Q\|^2_{L_{\HS}(U;H^1)}+ \frac{C}{\lambda^\sigma}\|Q\|_{L_{\HS}(U;H^1)}^{2+2\sigma}+ C\frac{\|Q\|^{2\sigma_d}_{L_{\HS}(U;H)}}{\lambda^{\sigma_d-1}}\Big)\d t.
\end{align*}
We employ Young's inequality again to infer
\begin{align*}
&n \Phi_1(u)^{n-1}\Big(\|Q\|^2_{L_{\HS}(U;H^1)}+ \frac{C}{\lambda^\sigma}\|Q\|_{L_{\HS}(U;H^1)}^{2+2\sigma}+ C\frac{\|Q\|^{2\sigma_d}_{L_{\HS}(U;H)}}{\lambda^{\sigma_d-1}}\Big)\\
&\le \frac{cn}{100}\lambda\Phi_1(u)^n+ C\frac{\|Q\|^{2n}_{L_{\HS}(U;H^1)}}{\lambda^{n-1}}+  C\frac{\|Q\|_{L_{\HS}(U;H^1)}^{2(1+\sigma)n}}{\lambda^{(1+\sigma)n-1}}+C\frac{\|Q\|^{2n\sigma_d}_{L_{\HS}(U;H)}}{\lambda^{n\sigma_d-1}}.
\end{align*}
With regard to the second term on the right-hand side of \eqref{eqn:d.Phi(u)^n:focusing}, we note that the quaratic variation process $\la M_1\ra$ is given by
\begin{align*}
\d \la M_1\ra = 4 \sum_{k\ge 1}\big|\Re\big(\la \grad u, \grad Qe_k\ra_H+\la |u|^{2\sigma}u,Qe_k\ra_H+ \sigma_d\kappa\|u\|_H^{2(\sigma_d-1)} \la u,Qe_k\ra_H  \big)\big|^2 \d t.
\end{align*}
Similar to estimate \eqref{ineq:d<M_(-1)>}, we have
\begin{align*}
& \big|\Re\big(\la \grad u, \grad Qe_k\ra_H+\la |u|^{2\sigma}u,Qe_k\ra_H+ \sigma_d\kappa\|u\|_H^{2(\sigma_d-1)} \la u,Qe_k\ra_H  \big)\big|^2 \\
&\le   c\big( \|\grad u\|^2_H+\|u\|^{4\sigma +2}_{L^{2+2\sigma}} \big)\|Q\|^2_{L_{\HS}(U;H^1)}+ \|u\|^{4\sigma_d-2}_H\|Q\|^2_{L_{\HS}(U;H)}.
\end{align*}
We invoke \eqref{ineq:Phi(u)} once again to deduce
\begin{align} \label{ineq:d.<M_1>}
\d \la M_1\ra \le c \big(\Phi_1(u)+\Phi_1(u)^{2-\frac{1}{1+\sigma}} \big)\|Q\|^2_{L_{\HS}(U;H^1)}\d t+ c \Phi_1(u)^{2-\frac{1}{\sigma_d}}\|Q\|^2_{L_{\HS}(U;H)}\d t.
\end{align}
It follows that
\begin{align*}
&\frac{1}{2}n(n-1)\Phi_1(u)^{n-2} \d\la M_1\ra \\
& \le c \big(\Phi_1(u)^{n-1}+\Phi_1(u)^{n-\frac{1}{1+\sigma}} \big)\|Q\|^2_{L_{\HS}(U;H^1)}\d t+ c \Phi_1(u)^{n-\frac{1}{\sigma_d}}\|Q\|^2_{L_{\HS}(U;H)}\d t\\
&\le \frac{c}{100}\lambda\Phi_1(u)^n \d t + C\frac{\|Q\|^{2n}_{L_{\HS}(U;H^1)}}{\lambda^{n-1}}\d t+C \frac{\|Q\|^{2n(1+\sigma)}_{L_{\HS}(U;H^1)}}{\lambda^{n(1+\sigma)-1}}\d t+ C\frac{\|Q\|^{2n\sigma_d}_{L_{\HS}(U;H)}}{\lambda^{n\sigma_d-1}}\d t.
\end{align*}
Altogether, from \eqref{eqn:d.Phi(u)^n:focusing}, we obtain the bound
\begin{align} \label{ineq:d.Phi(u)^n:focusing}
\d \Phi_1(u)^n& \le -c_{1,n}\lambda\Phi_1(u)^n \d t+ n\Phi_1(u)^{n-1}\d M_1 + C\frac{\|Q\|^{2n}_{L_{\HS}(U;H^1)}}{\lambda^{n-1}}\d t \notag \\
&\qquad+C \frac{\|Q\|^{2n(1+\sigma)}_{L_{\HS}(U;H^1)}}{\lambda^{n(1+\sigma)-1}}\d t+ C\frac{\|Q\|^{2n\sigma_d}_{L_{\HS}(U;H)}}{\lambda^{n\sigma_d-1}}\d t.
\end{align}
In particular, we also deduce the following estimate in expectation
\begin{align*}
\frac{\d}{\d t} \E\Phi_1(u(t))^n& \le -c_{1,n}\lambda\E\Phi_1(u(t))^n +  C\frac{\|Q\|^{2n}_{L_{\HS}(U;H^1)}}{\lambda^{n-1}}+C \frac{\|Q\|^{2n(1+\sigma)}_{L_{\HS}(U;H^1)}}{\lambda^{n(1+\sigma)-1}}+ C\frac{\|Q\|^{2n\sigma_d}_{L_{\HS}(U;H)}}{\lambda^{n\sigma_d-1}}.
\end{align*}
In turn, Gronwall's inequality implies estimate \eqref{ineq:moment-bound:E.Phi(u(t))^n<e^(-lambda.t)E.Phi(u_0)+C:focusing}, as claimed. The proof is thus finished.

\end{proof}

As a consequence of Lemma \ref{lem:moment-bound:H1}, we establish Lemma \ref{lem:P(sup[Phi+int.Phi])} concerning an estimate on the growth rate of the solutions over time. Notably, the result of Lemma \ref{lem:P(sup[Phi+int.Phi])} will be invoked in the proofs of Lemma \ref{lem:ergodicity:P(ell(k+1)=k+1|l(k)=infty)>epsilon} and Lemma \ref{lem:ergodicity:ell(k+1).neq.l|ell(k)=l)<1/2(1+(k-l)T)^-q}, both of which are employed to establish Theorem \ref{thm:poly-mixing}.

\begin{lemma} \label{lem:P(sup[Phi+int.Phi])} Under Assumptions \ref{cond:sigma} and \ref{cond:Q:poly-mixing}, let $u_0\in L^2(\Omega;H^1)$ and $u(t)$ be the solution of \eqref{eqn:Schrodinger:original} with initial condition $u_0$. Then, for all $n\ge 1$, $q> 2$ and $\rho>0$, the followings hold for some positive constants $K_n$, $K_{1,n,q}$ and $K_{2,n,q}$ independent of $u_0$, $T$, $\sigma$, $\rho$ and $\lambda$:
\begin{align}   \label{ineq:P(sup_[0,T][Phi+int.Phi])}
\P\Big( \sup_{t\in[0,T]}\Big[ & \Phi_\alpha(u(t))^n  + \lambda  \int_0^t\Big(c_{1,n}\Phi_\alpha(u(s))^n +n\|\Gamma(s)\|^{2n}_{H}+n\|\triangle\Gamma(s)\|^ {2n}_{H}\Big)\d s\notag  \\
& \qquad\qquad- K_{n} Q_{1,\alpha,n}t   \Big]  \ge \Phi_\alpha(u_0)^n+\rho\sqrt{T} \Big) \notag \\
&\le  \frac{K_{1,n,q}}{\rho^q} \big(   \E\Phi_\alpha(u_0)^{nq}+ Q_{2,\alpha,n} ^q \big),\quad T>0;
\end{align}
and
\begin{align}\label{ineq:P(sup_[t>T][Phi+int.Phi])}
 \P\Big( \sup_{t\ge T}\Big[ & \Phi_\alpha(u(t))^n  + \lambda  \int_0^t\Big(c_{1,n}\Phi_\alpha(u(s))^n +n\|\Gamma(s)\|^{2n}_{H}+n\|\triangle\Gamma(s)\|^{2n}_{H}\Big)\d s\notag  \\
& \qquad\qquad- K_{n}  \big(Q_{1,\alpha,n} +1 \big)t  \Big]  \ge \Phi_\alpha(u_0)^n+\rho \Big) \notag \\
&\le  \frac{K_{2,n,q}}{(T+\rho)^{q/2-1}} \big(   \E\Phi_\alpha(u_0)^{nq}+ Q_{2,\alpha,n} ^q +1 \big),\quad T>1.
\end{align}
In the above, $\Phi_\alpha$ is the functional defined in \eqref{form:Phi}, $\Gamma$ is the stochastic convolution \eqref{eqn:Schrodinger:Gamma}, and $Q_{1,\alpha,n}$ and $Q_{2,\alpha,n}$ are given by
\begin{align} \label{form:Q_(1,alpha,n)}
Q_{1,\alpha,n}=\begin{cases} \dfrac{\|Q\|^{2n}_{L_{\HS}(U;H^2)}}{\lambda^{n-1}}+ \dfrac{\|Q\|^{2n(1+\sigma)}_{L_{\HS}(U;H^1)}}{\lambda^{n(1+\sigma)-1}},&\alpha=-1,\\
\noalign{\vskip 9pt}
\dfrac{\|Q\|^{2n}_{L_{\HS}(U;H^2)}}{\lambda^{n-1}}+ \dfrac{\|Q\|^{2n(1+\sigma)}_{L_{\HS}(U;H^1)}}{\lambda^{n(1+\sigma)-1}}+ \dfrac{\|Q\|^{2n\sigma_d}_{L_{\HS}(U;H)}}{\lambda^{n\sigma_d-1}},&\alpha=1.
\end{cases}
\end{align}
and
\begin{align} \label{form:Q_(2,alpha,n)}
Q_{2,\alpha,n}=\begin{cases} \dfrac{\|Q\|^{2n}_{L_{\HS}(U;H^2)}}{\lambda^{n-1/2}}+ \dfrac{\|Q\|^{2n}_{L_{\HS}(U;H^1)}}{\lambda^{n}} + \dfrac{\|Q\|^{2n(1+\sigma)}_{L_{\HS}(U;H^1)}}{\lambda^{ n(1+\sigma)}}\\
\noalign{\vskip 7pt} \qquad\qquad+ \|Q\|^{2n}_{L_{\HS}(U;H^1)}+\|Q\|^{2n(1+\sigma)}_{L_{\HS}(U;H^1)},&\alpha=-1,\\
\noalign{\vskip 11pt}
\dfrac{\|Q\|^{2n}_{L_{\HS}(U;H^2)}}{\lambda^{n-1/2}}+\dfrac{\|Q\|^{2n}_{L_{\HS}(U;H^1)}}{\lambda^{n}}+ \dfrac{\|Q\|^{2n(1+\sigma)}_{L_{\HS}(U;H^1)}}{\lambda^{n(1+\sigma)}}+ \dfrac{\|Q\|^{2n\sigma_d}_{L_{\HS}(U;H)}}{\lambda^{n\sigma_d}}\\
\noalign{\vskip 7pt}\qquad+ \|Q\|^{2n}_{L_{\HS}(U;H^1)}+\|Q\|^{2n(1+\sigma)}_{L_{\HS}(U;H^1)}+ \|Q\|^{2n\sigma_d}_{L_{\HS}(U;H)},&\alpha=1.
\end{cases}
\end{align}

\end{lemma}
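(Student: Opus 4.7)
The plan is to derive a single pathwise master inequality by combining It\^o's formula for $\Phi_\alpha(u)^n$ with parallel It\^o computations for the stochastic convolution $\Gamma$, and then bound the supremum of the resulting martingale term via Doob's maximal inequality and Burkholder--Davis--Gundy. Starting from the differential inequalities \eqref{ineq:d.Phi(u)^n:defocusing} and \eqref{ineq:d.Phi(u)^n:focusing} already established inside the proof of Lemma \ref{lem:moment-bound:H1}, integrating from $0$ to $t$ yields a bound of the form
\begin{align*}
\Phi_\alpha(u(t))^n + c_{1,n}\lambda\int_0^t \Phi_\alpha(u(s))^n \d s \le \Phi_\alpha(u_0)^n + C_n \widetilde{Q}_{1,n}\, t + \mathcal{N}_\alpha(t),
\end{align*}
where $\mathcal{N}_\alpha(t)=n\int_0^t \Phi_\alpha(u(s))^{n-1}\d M_\alpha(s)$ is a continuous local martingale. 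Since $\Gamma$ solves a linear damped OU equation driven by $Q\,\d W$, entirely analogous It\^o applications to $\|\Gamma\|^{2n}_H$ and $\|\triangle \Gamma\|^{2n}_H$ (the latter using Assumption \ref{cond:Q:poly-mixing}) produce two more inequalities of the same structure, with drift coefficients of order $\|Q\|^{2n}_{L_{\HS}(U;H)}/\lambda^{n-1}$ and $\|Q\|^{2n}_{L_{\HS}(U;H^2)}/\lambda^{n-1}$, and their own martingales. Summing the three pathwise inequalities reproduces exactly the integrand appearing in the lemma, and the aggregate $t$-coefficient matches $K_n Q_{1,\alpha,n}$ after using Young's inequality on every noise term.

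The first estimate \eqref{ineq:P(sup_[0,T][Phi+int.Phi])} then reduces to controlling the aggregate martingale $\mathcal{M}(t):=\mathcal{N}_\alpha(t)+\mathcal{N}^\Gamma_{0,n}(t)+\mathcal{N}^\Gamma_{2,n}(t)$. By Doob's $L^q$ maximal inequality combined with BDG,
\begin{align*}
\P\Big(\sup_{t\in[0,T]}|\mathcal{M}(t)|\ge \rho\sqrt{T}\Big) \le \frac{C_q\,\E\langle \mathcal{M}\rangle(T)^{q/2}}{\rho^q\, T^{q/2}}.
\end{align*}
The quadratic variation bound \eqref{ineq:d.<M_1>} and its straightforward $\Gamma$-analogues give $\d\langle\mathcal{M}\rangle\lesssim \big(\Phi_\alpha(u)^{2n-1}+\Phi_\alpha(u)^{2n-\frac{1}{1+\sigma}}+\|\Gamma\|^{4n-2}_H+\|\triangle\Gamma\|^{4n-2}_H\big)\,\|Q\|^2\,\d t$, after which Jensen's inequality in $s$ and the moment estimates from Lemmas \ref{lem:moment-bound:H} and \ref{lem:moment-bound:H1} (uniform in $t$ due to their exponential decay) yield $\E\langle\mathcal{M}\rangle(T)^{q/2}\le C\, T^{q/2}\big(\E\Phi_\alpha(u_0)^{nq}+Q_{2,\alpha,n}^q\big)$. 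The halving of the $\lambda$-exponents under the square root in BDG is exactly what accounts for the split between $Q_{1,\alpha,n}$ and $Q_{2,\alpha,n}$ stated in \eqref{form:Q_(1,alpha,n)}--\eqref{form:Q_(2,alpha,n)}.

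For the tail bound \eqref{ineq:P(sup_[t>T][Phi+int.Phi])}, the plan is to decompose $[T,\infty)=\bigcup_{k\ge 0}[T+k,T+k+1]$ and apply the first estimate on each subinterval, starting at time $T+k$ by the Markov property and using the stationary moment control of $\Phi_\alpha(u(T+k))$ from Lemma \ref{lem:moment-bound:H1}. The effective deviation threshold on the $k$-th subinterval becomes roughly $\rho+k$, so summing produces $\sum_{k\ge 0}(T+k+\rho)^{-q/2}\lesssim (T+\rho)^{-q/2+1}$, matching the stated decay; the hypothesis $q>2$ is precisely what ensures convergence of this series. The main technical obstacle I anticipate is not any single analytic step but the careful bookkeeping of all $\lambda$-dependent constants generated by repeated applications of Young's inequality with tailored exponents, so that the final drift- and variance-type contributions aggregate exactly into $Q_{1,\alpha,n}$ and $Q_{2,\alpha,n}$ in both the focusing and defocusing cases, including the extra $\|u\|^{2\sigma_d}_H$ term in the focusing Lyapunov function.
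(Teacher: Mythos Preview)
Your approach to \eqref{ineq:P(sup_[0,T][Phi+int.Phi])} is essentially the paper's: sum the It\^o inequalities for $\Phi_\alpha(u)^n$, $\|\Gamma\|^{2n}_H$, $\|\triangle\Gamma\|^{2n}_H$ into one pathwise master inequality with aggregate martingale $M_{\alpha,n}$, then bound $\P\big(\sup_{[0,T]} M_{\alpha,n}\ge\rho\sqrt{T}\big)$ by Markov plus BDG, estimating $\E\langle M_{\alpha,n}\rangle(T)^{q/2}$ via the moment bounds of Lemma~\ref{lem:moment-bound:H1}.

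For \eqref{ineq:P(sup_[t>T][Phi+int.Phi])}, however, the Markov-property restart you describe does not line up with the event. The event involves the \emph{cumulative} integral $\int_0^t$ and the \emph{fixed} constant $\Phi_\alpha(u_0)^n$; restarting at $T+k$ and applying \eqref{ineq:P(sup_[0,T][Phi+int.Phi])} to the shifted solution replaces these by $\int_{T+k}^t$ and the \emph{random} $\Phi_\alpha(u(T+k))^n$, and bridging the two would require pathwise control of $\lambda\int_0^{T+k}(\cdots)\,\d s$, which you do not have. The paper avoids this by never leaving the martingale level: the master inequality on all of $[0,t]$ already yields
\[
\Phi_\alpha(u(t))^n + \lambda\!\int_0^t(\cdots)\,\d s - K_n(Q_{1,\alpha,n}+1)t \;\le\; \Phi_\alpha(u_0)^n + M_{\alpha,n}(t) - K_n t,
\]
so the event on $[T+m,T+m+1]$ is contained in $\big\{\sup_{[T+m,T+m+1]} M_{\alpha,n}(t)\ge \rho + K_n(T+m)\big\}$. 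One then applies BDG on the \emph{full} interval $[0,T+m+1]$ (no restart), obtaining a bound of order $(T+m+1)^{q/2}/(\rho+K_n(T+m))^q$, which sums in $m$ to the claimed $(T+\rho)^{-q/2+1}$. The stationary moment control of $\Phi_\alpha(u(T+k))$ you invoke never enters; what matters is the growth $\E\langle M_{\alpha,n}\rangle(T+m+1)^{q/2}\lesssim (T+m+1)^{q/2}$, already furnished by the moment bounds from time $0$.
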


\begin{proof}
With regard to \eqref{ineq:P(sup_[0,T][Phi+int.Phi])}, we first collect the estimates from \eqref{ineq:d|Gamma|^2n_H} and \eqref{ineq:d|triangle.Gamma|^2n_H} to see that
\begin{align*}
\lambda n\int_0^t \Big(\|\Gamma(s)\|^{2n}_H+n\|\triangle \Gamma(s)\|^{2n}_H\Big)\d s & \le c\frac{\|Q\|^{2n}_{L_{\HS}(U;H^2)}}{\lambda^{n-1}} t+\int_0^t 2n\|\Gamma(s)\|^{2(n-1)}_H\Re\big(\la \Gamma(s),Q\d W(s)\ra_H\big)\\
&\qquad+2n\int_0^t \|\triangle \Gamma(s)\|^{2(n-1)}_H\Re\big(\la \triangle \Gamma(s),\triangle Q\d W(s)\ra_H\big).
\end{align*}
This together with \eqref{ineq:d.Phi(u)^n:defocusing} and \eqref{ineq:d.Phi(u)^n:focusing} produces 
\begin{align} \label{ineq:Phi(u)^n+int.Phi(u)^n+|Gamma|^n<Phi(u_0)+M_alpha}
 \Phi_\alpha(u(t))^{n} +&  \lambda \int_0^t \Big(c_{1,n}\Phi_\alpha(u(s))^{n}+n\|\Gamma(s)\|^{2n}_H+n\|\triangle \Gamma(s)\|^{2n}_H\Big)\d s \notag \\
 &\le  \Phi_\alpha(u_0)^{n} + K_n\,Q_{1,\alpha, n}t+  M_{\alpha,n}(t).
\end{align}
In the above, $c_{1,n}$ is as in \eqref{ineq:d.Phi(u)^n:defocusing} and \eqref{ineq:d.Phi(u)^n:focusing}, $K_n$ is a positive constant independent of $t,u_0,\lambda, Q_{1,\alpha,n}$, and the semi Martingale process $M_{\alpha,n}$ is given by
\begin{align} \label{form:d.M_(alpha,n)}
\d M_{\alpha,n}&= n\Phi_\alpha(u)^{n-1}\d M_{\alpha}+2n\|\Gamma\|^{2(n-1)}_H\Re\big(\la \Gamma,Q\d W\ra_H\big) \notag \\
&\qquad+2n\|\triangle \Gamma\|^{2(n-1)}_H\Re\big(\la \triangle \Gamma,\triangle Q\d W\ra_H\big).
\end{align}
where we recall that $M_\alpha$ is defined in either \eqref{form:dM_(-1)} or \eqref{form:M_1}, corresponding to either $\alpha=-1$ or $\alpha=1$, respectively. In particular, given $\rho,q,T>0$, we have
\begin{align*}
\Big\{\sup_{t\in[0,T]}\Big[ \Phi_\alpha(u(t))^{n} +&  \lambda \int_0^t \Big(c_{1,n}\Phi_\alpha(u(s))^{n}+n\|\Gamma(s)\|^{2n}_H+n\|\triangle \Gamma(s)\|^{2n}_H\Big)\d s - K_0 Q_{\alpha,n}t\Big]\\
&\qquad\ge   \Phi_\alpha(u_0)^{n} +\rho\sqrt{T}\Big\} \subset\Big\{\sup_{t\in[0,T]} M_{\alpha,n}(t)\ge \rho\sqrt{T}\Big\}.
\end{align*}
As a consequence of Markov's and Burkholder's inequalities, we deduce the following bound in probability
\begin{align} \label{ineq:P(sup_[0,T][Phi+int.Phi]):a}
\P\Big( \sup_{t\in[0,T]}\Big[ & \Phi_\alpha(u(t))^{n} + \lambda \int_0^t \Big(c_{1,n}\Phi_\alpha(u(s))^{n}+n\|\Gamma(s)\|^{2n}_H+n\|\triangle \Gamma(s)\|^{2n}_H\Big)\d s - K_0 Q_{\alpha,n}t\Big] \notag \\
&\qquad\qquad\ge   \Phi(u_0)^{n} +\rho\sqrt{T} \Big)\notag \\
&\le \frac{1}{\rho^qT^{q/2}}\E\Big[\sup_{t\in[0,T]} |M_{\alpha,n}(t)|^q  \Big] \le \frac{1}{\rho^qT^{q/2}} \E |\la M_{\alpha,n}\ra(T)|^{q/2}.
\end{align}
It therefore remains to estimate the quadratic variation process on the above right-hand side. To this end, from \eqref{form:d.M_(alpha,n)}, we observe that
\begin{align*}
\d \la M_{\alpha,n}\ra \le c \Phi_\alpha(u)^{2(n-1)}\d \la M_\alpha\ra + c\|\Gamma\|^{4n-2}_H\|Q\|^2_{L_{\HS}(U;H)}\d t+c \|\triangle\Gamma\|^{4n-2}_H\|Q\|^2_{L_{\HS}(U;H^2)}\d t,
\end{align*}
whence
\begin{align} \label{ineq:E<M_(alpha,n)>^(q/2)}
\E |\la M_{\alpha,n}\ra(T)|^{q/2} &\le c\,\E \Big|\int_0^T \Phi_\alpha(u(t))^{2(n-1)}\d \la M_\alpha(t)\ra\Big|^{q/2}+c \, \E\Big|\int_0^T\|\Gamma(t)\|^{4n-2}_H\d t\Big|^{q/2}\|Q\|^q_{L_{\HS}(U;H)} \notag \\
&\qquad +c \, \E\Big|\int_0^T\|\triangle\Gamma(t)\|^{4n-2}_H\d t\Big|^{q/2}\|Q\|^q_{L_{\HS}(U;H^2)}.
\end{align}
Regarding the terms involving $\Gamma$ on the above right-hand side, in view of \eqref{ineq:E.int.|Gamma|^2n_H} and \eqref{ineq:E.int.|triangle.Gamma|^2n_H}, we employ Holder's inequality to infer
\begin{align*}
&\E\Big|\int_0^T\|\Gamma(t)\|^{4n-2}_H\d t\Big|^{q/2}\|Q\|^q_{L_{\HS}(U;H)}+ \E\Big|\int_0^T\|\triangle\Gamma(t)\|^{4n-2}_H\d t\Big|^{q/2}\|Q\|^q_{L_{\HS}(U;H^2)}\\
&\le T^{q/2-1}\Big( \E\int_0^T\|\Gamma(t)\|^{(2n-1)q}_H\d t\|Q\|^q_{L_{\HS}(U;H)}+ \E\int_0^T\|\triangle\Gamma(t)\|^{(2n-1)q}_H\d t\|Q\|^q_{L_{\HS}(U;H^2)}\Big)\\
&\le c\,T^{q/2} \cdot \frac{\|Q\|^{2nq}_{L_{\HS}(U;H^2)}}{\lambda^{(2n-1)q/2}}.
\end{align*}
Turning to the term involving $\la M_\alpha\ra$, there are two cases to be considered depending on $\alpha$. On the one hand, when $\alpha=-1$,  we recall from \eqref{ineq:d<M_(-1)>} that
\begin{align*}
\d \la M_{-1}\ra &\le c \big(\Phi_{-1}(u)+\Phi_{-1}(u)^{2-\frac{1}{1+\sigma}} \big)\|Q\|^2_{L_{\HS}(U;H^1)}\d t,
\end{align*}
whence
\begin{align*}
&\E \Big|\int_0^T \Phi_{-1}(u(t))^{2(n-1)}\d \la M_{-1}(t)\ra\Big|^{q/2} \\
& \le c\,T^{q/2-1}\E \int_0^T\big[ \Phi_{-1}(u(t))^{2n-1}+ \Phi_{-1}(u(t))^{2n-\frac{1}{1+\sigma}}\big]^{q/2}\d t \cdot \|Q\|^q_{L_{\HS}(U;H^1)}\\
&\le c\,T^{q/2-1}\E \int_0^T \Big[\Phi_{-1}(u(t))^{nq}+ \|Q\|^{2qn}_{L_{\HS}(U;H^1)}+\|Q\|^{2qn(1+\sigma)}_{L_{\HS}(U;H^1)} \Big]\d t.
\end{align*}
This together with \eqref{ineq:moment-bound:E.Phi(u(t))^n<e^(-lambda.t)E.Phi(u_0)+C:defocusing} produces
\begin{align*}
&\E \Big|\int_0^T \Phi_{-1}(u(t))^{2(n-1)}\d \la M_{-1}(t)\ra\Big|^{q/2} \\
& \le c\, T^{q/2}\Big[ \E \Phi_{-1}(u_0)^{nq} + \frac{\|Q\|^{2nq}_{L_{\HS}(U;H^1)}}{\lambda^{nq}} + \frac{\|Q\|^{2nq(1+\sigma)}_{L_{\HS}(U;H^1)}}{\lambda^{ nq(1+\sigma)}}
+ \|Q\|^{2qn}_{L_{\HS}(U;H^1)}+\|Q\|^{2qn(1+\sigma)}_{L_{\HS}(U;H^1)}\Big].
\end{align*}
On the other hand, when $\alpha=1$, we recall from \eqref{ineq:d.<M_1>} that
\begin{align*}
\d \la M_1\ra \le c \big(\Phi_1(u)+\Phi_1(u)^{2-\frac{1}{1+\sigma}} \big)\|Q\|^2_{L_{\HS}(U;H^1)}\d t+ c \Phi_1(u)^{2-\frac{1}{\sigma_d}}\|Q\|^2_{L_{\HS}(U;H)}\d t.
\end{align*}
It follows that
\begin{align*}
&\E \Big|\int_0^T \Phi_{1}(u(t))^{2(n-1)}\d \la M_{1}(t)\ra\Big|^{q/2} \\
& \le c\,T^{q/2-1}\E \int_0^T\big[ \Phi_{1}(u(t))^{2n-1}+ \Phi_{-1}(u(t))^{2n-\frac{1}{1+\sigma}}\big]^{q/2}\d t \cdot \|Q\|^q_{L_{\HS}(U;H^1)}\\
&\qquad + c\,T^{q/2-1}\E \int_0^T[ \Phi_{1}(u(t))^{(2n-\frac{1}{\sigma_d})\frac{q}{2}}\d t \cdot \|Q\|^q_{L_{\HS}(U;H)},
\end{align*}
whence
\begin{align*}
&\E \Big|\int_0^T \Phi_{1}(u(t))^{2(n-1)}\d \la M_{1}(t)\ra\Big|^{q/2} \\
&\le c\,T^{q/2-1}\E \int_0^T \Big[\Phi_{1}(u(t))^{nq}+ \|Q\|^{2qn}_{L_{\HS}(U;H^1)}+\|Q\|^{2qn(1+\sigma)}_{L_{\HS}(U;H^1)}+ \|Q\|^{2qn\sigma_d}_{L_{\HS}(U;H)} \Big]\d t.
\end{align*}
In view of \eqref{ineq:moment-bound:E.Phi(u(t))^n<e^(-lambda.t)E.Phi(u_0)+C:focusing}, we immediately obtain
\begin{align*}
&\E \Big|\int_0^T \Phi_{1}(u(t))^{2(n-1)}\d \la M_{1}(t)\ra\Big|^{q/2} \\
&\le c\,T^{q/2}\Big[ \E \Phi_1(u_0)^{nq} + \frac{\|Q\|^{2nq}_{L_{\HS}(U;H^1)} }{\lambda^{nq}}+ \frac{\|Q\|^{2nq(1+\sigma)}_{L_{\HS}(U;H^1)}}{\lambda^{ nq(1+\sigma)}}+\frac{\|Q\|^{2nq\sigma_d}_{L_{\HS}(U;H)}}{\lambda^{nq\sigma_d}}\\
&\qquad \qquad\qquad + \|Q\|^{2qn}_{L_{\HS}(U;H^1)}+\|Q\|^{2qn(1+\sigma)}_{L_{\HS}(U;H^1)}+ \|Q\|^{2qn\sigma_d}_{L_{\HS}(U;H)}\Big].
\end{align*}
Now, we collect the above estimates with \eqref{ineq:E<M_(alpha,n)>^(q/2)} to deduce the bound
\begin{align} \label{ineq:E|<M_(alpha,n)>|^(q/2)}
\E |\la M_{\alpha,n}\ra(T)|^{q/2} \le c\, T^{q/2} \big(\E \Phi_\alpha(u_0)^{nq}+ Q_{2,\alpha,n}^q \big),
\end{align}
where $Q_{2,\alpha,n}$ is the constant defined in \eqref{form:Q_(2,alpha,n)}. In turn, this together with \eqref{ineq:P(sup_[0,T][Phi+int.Phi]):a} implies that
\begin{align*}
\P\Big( \sup_{t\in[0,T]}\Big[ & \Phi_\alpha(u(t))^{n} + \lambda \int_0^t \Big(c_{1,n}\Phi_\alpha(u(s))^{n}+n\|\Gamma(s)\|^{2n}_H+n\|\triangle \Gamma(s)\|^{2n}_H\Big)\d s - K_0 Q_{\alpha,n}t\Big] \notag \\
&\qquad\qquad\ge   \Phi(u_0)^{n} +\rho\sqrt{T} \Big)\notag \\
& \le \frac{1}{\rho^qT^{q/2}} \E |\la M_{\alpha,n}\ra(T)|^{q/2}\le \frac{c}{\rho^{q}}\big(\E \Phi_\alpha(u_0)^{nq}+ Q_{2,\alpha,n}^q \big).
\end{align*}
We emphasize that the constant $c$ on the above right-hand side only depends on $\alpha,n$ and $q$. This establishes \eqref{ineq:P(sup_[0,T][Phi+int.Phi])}, as claimed.

Turning to \eqref{ineq:P(sup_[t>T][Phi+int.Phi])}, from \eqref{ineq:Phi(u)^n+int.Phi(u)^n+|Gamma|^n<Phi(u_0)+M_alpha}, we note that
\begin{align*}
 \Phi_\alpha(u(t))^{n} +&  \lambda \int_0^t \Big(c_{1,n}\Phi_\alpha(u(s))^{n}+n\|\Gamma(s)\|^{2n}_H+n\|\triangle \Gamma(s)\|^{2n}_H\Big)\d s -  K_n\big(Q_{1,\alpha, n}+1\big)t \\
 &\le  \Phi_\alpha(u_0)^{n} +  M_{\alpha,n}(t)-K_n t.
\end{align*}
So, for each $m\ge 0$, we have
\begin{align*}
\P\Big( \sup_{t\in[T+m,T+m+1]} \Big[ & \Phi_\alpha(u(t))^{n} +  \lambda \int_0^t \Big(c_{1,n}\Phi_\alpha(u(s))^{n}+n\|\Gamma(s)\|^{2n}_H+n\|\triangle \Gamma(s)\|^{2n}_H\Big)\d s \\
&\qquad\qquad-  K_n\big(Q_{1,\alpha, n}+1\big)t  \Big]\ge \Phi(u_0)^n+\rho\Big)\\
&\le \P\Big(\sup_{t\in[T+m,T+m+1]} \big[M_{\alpha,n}(t)-K_nt\big]\ge \rho \Big)\\
&\le \P\Big(\sup_{t\in[T+m,T+m+1]} M_{\alpha,n}(t)\ge \rho +K_n(T+m)\Big).
\end{align*}
We invoke Markov's and Burkholder's inequality again to infer
\begin{align*}
& \P\Big(\sup_{t\in[T+m,T+m+1]} M_{\alpha,n}(t)\ge \rho +K_n(T+m)\Big) \\
&\quad\le  \frac{1}{(\rho+K_n(T+m))^{q}}\E\Big[ \sup_{t\in[T+m,T+m+1]}| M_{\alpha,n}(t)|^q\Big] \\
&\quad \le \frac{1}{(\rho+K_n(T+m))^{q}}\E\big[|\la M_{\alpha,n}\ra(T+m+1)|^{q/2}\big].
\end{align*}
In light of \eqref{ineq:E|<M_(alpha,n)>|^(q/2)}, we obtain
\begin{align*}
& \P\Big(\sup_{t\in[T+m,T+m+1]} M_{\alpha,n}(t)\ge \rho +K_n(T+m)\Big)\\
&\quad \le c\frac{(T+m+1)^{q/2}}{(\rho+K_n(T+m))^{q}}\big(\E\Phi_\alpha(u_0)^{nq}+Q_{2,\alpha,n}^q\big),
\end{align*}
whence
\begin{align*}
\P\Big( \sup_{t\in[T+m,T+m+1]} \Big[ & \Phi_\alpha(u(t))^{n} +  \lambda \int_0^t \Big(c_{1,n}\Phi_\alpha(u(s))^{n}+n\|\Gamma(s)\|^{2n}_H+n\|\triangle \Gamma(s)\|^{2n}_H\Big)\d s \\
&\qquad\qquad-  K_n\big(Q_{1,\alpha, n}+1\big)t  \Big]\ge \Phi(u_0)^n+\rho\Big)\\
&\le c\frac{(T+m+1)^{q/2}}{(\rho+K_n(T+m))^{q}}\big(\E\Phi_\alpha(u_0)^{nq}+Q_{2,\alpha,n}^q\big).
\end{align*}
In the above, we emphasize that the positive constant $c$ is independent of $\rho,T+m,u_0$ and $Q_{2,\alpha,n}$. As a consequence, we deduce the estimate on $[T,\infty)$
\begin{align*}
\P\Big( \sup_{t\ge T} \Big[ & \Phi_\alpha(u(t))^{n} +  \lambda \int_0^t \Big(c_{1,n}\Phi_\alpha(u(s))^{n}+n\|\Gamma(s)\|^{2n}_H+n\|\triangle \Gamma(s)\|^{2n}_H\Big)\d s \\
&\qquad\qquad-  K_n\big(Q_{1,\alpha, n}+1\big)t  \Big]\ge \Phi(u_0)^n+\rho\Big)\\
&\le c\,\big(\E\Phi_\alpha(u_0)^{nq}+Q_{2,\alpha,n}^q\big)\cdot\sum_{m\ge 0}\frac{(T+m+1)^{q/2}}{(\rho+K_n(T+m))^{q}}.
\end{align*}
To control the infinite sum on the above right-hand side, it is important to recall that $T\ge 1$, $q>2$ and that $K_n$ does not depend on $\rho$ and $T$. Thus, we have
\begin{align*}
\sum_{m\ge 0}\frac{(T+m+1)^{q/2}}{(\rho+K_n(T+m))^{q}} \le c\sum_{m\ge 0}\frac{1}{(\rho+T+m)^{q/2}} \le \frac{c}{(\rho+T)^{q/2-1}},
\end{align*}
implying
\begin{align*}
\P\Big( \sup_{t\ge T} \Big[ & \Phi_\alpha(u(t))^{n} +  \lambda \int_0^t \Big(c_{1,n}\Phi_\alpha(u(s))^{n}+n\|\Gamma(s)\|^{2n}_H+n\|\triangle \Gamma(s)\|^{2n}_H\Big)\d s \\
&\qquad\qquad-  K_n\big(Q_{1,\alpha, n}+1\big)t  \Big]\ge \Phi(u_0)^n+\rho\Big)\\
&\le c\,\big(\E\Phi_\alpha(u_0)^{nq}+Q_{2,\alpha,n}^q\big)\cdot \frac{1}{(\rho+T)^{q/2-1}}.
\end{align*}
This produces \eqref{ineq:P(sup_[t>T][Phi+int.Phi])}, thereby finishing the proof.

\end{proof}

In Lemma \ref{lem:Strichart:int_0^t|u|_infty^2sigma}, stated and proven next, we assert a pathwise estimates on the solution of \eqref{eqn:Schrodinger:original} in $L^\infty$ norm via the Lyapunov function $\Phi_\alpha$ and the noise by exploiting Strichartz's inequalities derived in Lemma \ref{lem:Strichart:|u|_infty}. The result of Lemma \ref{lem:Strichart:int_0^t|u|_infty^2sigma} will be used directly to establish both Theorem \ref{thm:unique-ergodicity} and Theorem \ref{thm:poly-mixing} later in Section \ref{sec:poly-mixing}.

\begin{lemma} \label{lem:Strichart:int_0^t|u|_infty^2sigma}
Given $u_0\in L^2(\Omega;H^1)$, let $u(t)$ be the solution of \eqref{eqn:Schrodinger:original} with initial condition $u_0$. Suppose that 
 
1. in dimension $d=1,2$, Assumptions \ref{cond:sigma} and \ref{cond:Q:poly-mixing} hold;

2. in dimension $d=3$, Assumptions \ref{cond:sigma} and \ref{cond:Q:poly-mixing} and condition \eqref{cond:sigma:poly-mixing} hold.
 
Then, there exist positive constants $C_\sigma$ and $n_\sigma$ independent of $\lambda$, $t$ and $u_0$ such that the following holds
\begin{align} \label{ineq:Strichart:int_0^t|u|_infty^2sigma}
\int_0^t \|u(s)\|^{2\sigma}_{L^\infty}\d s &\le  C_\sigma \int_0^t \Big[c_{1,n_\sigma}\Phi_\alpha(u(s))^{n_\sigma}+n_\sigma\|\Gamma(s)\|^{n_\sigma}_H+n_\sigma \|\triangle\Gamma(s)\|^{n_\sigma}_H\Big]\d s \notag \\
&\qquad + C_\sigma (\|u_0\|^{2\sigma}_{H^1}+1+t),\quad t\ge 0.
\end{align}
In the above, $c_{1,n}$ is the constant from Lemma \ref{lem:moment-bound:H1}, and $\Gamma(s)$ is the stochastic convolution solving \eqref{eqn:Schrodinger:Gamma}.
\end{lemma}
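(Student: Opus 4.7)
The strategy exploits the mild formulation
\[
u(t) = e^{-\lambda t} S(t) u_0 + \int_0^t e^{-\lambda(t-s)} S(t-s) F_\alpha(u(s)) \, \d s + \Gamma(t),
\]
and estimates the $L^\infty$ norm of each summand separately. After distributing the $2\sigma$-power via the elementary inequality $(a+b+c)^{2\sigma} \leq C_\sigma(a^{2\sigma}+b^{2\sigma}+c^{2\sigma}+1)$, the three pieces are handled by combining the damped Strichartz inequality of Lemma \ref{lem:Strichart:|u|_infty} with standard Sobolev embeddings recalled in Section \ref{sec:main-result:setting}.

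For the linear piece $e^{-\lambda t} S(t) u_0$, unitarity of $S(t)$ on $H^1$ and the decay factor $e^{-\lambda t}$ combined with a Sobolev embedding $H^{s,p} \subset L^\infty$ (direct $H^1\subset L^\infty$ in $d=1$; via Strichartz interpolation in $d=2,3$) yield $\int_0^\infty e^{-2\sigma\lambda s}\|S(s)u_0\|_{L^\infty}^{2\sigma}\d s \leq C_\sigma \|u_0\|_{H^1}^{2\sigma}$, producing the $\|u_0\|_{H^1}^{2\sigma}$ contribution. For the stochastic convolution $\Gamma$, Sobolev embedding $H^s \subset L^\infty$ with $s>d/2$ and interpolation between $H$ and $H^2$, followed by Young's inequality with exponents chosen so that the power $n_\sigma$ is sufficiently large, give $\|\Gamma(s)\|_{L^\infty}^{2\sigma} \leq C_\sigma\bigl(\|\Gamma(s)\|_H^{n_\sigma} + \|\triangle\Gamma(s)\|_H^{n_\sigma} + 1\bigr)$, and integrating in time produces both the $\Gamma$-terms and the linear growth $C_\sigma t$ on the right-hand side of \eqref{ineq:Strichart:int_0^t|u|_infty^2sigma}.

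The crux is the nonlinear Duhamel term $N(t)$. Applying Lemma \ref{lem:Strichart:|u|_infty} to $N$, the built-in damping factor $e^{-\lambda(t-s)}$ together with Sobolev embedding produces an estimate of the form $\|N(t)\|_{L^\infty} \lesssim \int_0^t e^{-c\lambda(t-s)} \|F_\alpha(u(s))\|_{H^{1,p}} \, \d s$ for an appropriate $p$. The nonlinearity bound recalled in Remark \ref{rem:unique-ergodicity} gives $\|F_\alpha(u)\|_{H^{1,p}} \lesssim \|u\|_{H^1}^{1+2\sigma}$, which by \eqref{ineq:Phi(u)} is further dominated by a power of $\Phi_\alpha(u)$. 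Raising to the $2\sigma$-power, integrating in $t$, and applying Fubini together with Young's inequality to consolidate the exponents into a single large $n_\sigma$ collapses the nested convolution into $C_\sigma \int_0^t \Phi_\alpha(u(s))^{n_\sigma}\d s$, with the exponential decay contributing a $1/\lambda$ factor that is absorbed into $C_\sigma$.

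The main technical obstacle lies in the three-dimensional case, where $H^1 \not\subset L^\infty$ forces the use of Strichartz-admissible pairs rather than direct Sobolev control, and the exponents must be matched simultaneously with the growth $\|F_\alpha(u)\|_{H^{1,p}} \lesssim \|u\|_{H^1}^{1+2\sigma}$. The lower restriction $\sigma > 1/6$ in \eqref{cond:sigma:poly-mixing} is precisely what opens the window of admissible $p$ satisfying both the Sobolev requirement $s > 3/p$ needed to reach $L^\infty$ and the scaling compatibility needed to absorb the nonlinearity into the $H^1$-level Lyapunov control; the upper restriction $\sigma < 3/2$ is the one from Lemma \ref{lem:F_alpha} ensuring $\|F_\alpha(u)\|_{H^{1,p}} \lesssim \|u\|_{H^1}^{1+2\sigma}$ is available.
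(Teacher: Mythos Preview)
Your proposal is correct and follows essentially the same route as the paper: split $u$ via the mild formulation, control the linear flow and the Duhamel term by invoking Lemma~\ref{lem:Strichart:|u|_infty}, handle $\Gamma$ through the Sobolev embedding $H^2\subset L^\infty$ (for $d\le 3$), and then absorb $\|u\|_{H^1}^{q_\sigma}$ into $\Phi_\alpha^{n_\sigma}$ via \eqref{ineq:Phi(u)}. The only stylistic difference is that the paper treats Lemma~\ref{lem:Strichart:|u|_infty} as a black box and obtains the time-integrated bound \eqref{ineq:Strichart:|u|_infty} directly, whereas you partly re-describe its internal mechanism (the pointwise dispersive estimate with the $e^{-\lambda(t-s)}$ kernel); note that your heuristic pointwise bound on $N(t)$ omits the $|t-s|^{-2/\gamma}$ singularity, which is harmless at the level of a plan but must be tracked if you unpack the lemma yourself.
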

\begin{proof}
We note that in dimension $d=1$, estimate \eqref{ineq:Strichart:int_0^t|u|_infty^2sigma} clearly holds for suitable constants $C_\sigma$ and $n_\sigma$, thanks to the fact that $H^1(\rbb)\subset L^\infty(\rbb)$ and that $\Phi_\alpha$ given by \eqref{form:Phi} dominates $H^1$ norm. Considering the case $d=2,3$, from \eqref{eqn:Schrodinger:original}, we can recast $u(t)=u(t;u_0)$ in the mild formulation as follow.
\begin{align} \label{form:u(t):mild_solution}
u(t) = e^{-\lambda t}S(t) u_0 + \int_0^t e^{-\lambda (t-s) }S(t-s)F_\alpha(u(s))\d s+ \Gamma(t) .
\end{align}
So,
\begin{align*}
\int_0^t \|u(s)\|^{2\sigma}_{L^\infty}\d s
&\le C\int_0^t \big\|e^{-\lambda s}S(s)u_0\big\|^{2\sigma}_{L^\infty} +  \Big\|\int_0^s  S(s-\ell)F_\alpha(u(\ell))\d \ell\Big\|_{L^\infty}^{2\sigma} +  \|\Gamma(s)\|_{L^\infty}^{2\sigma} \d s.
\end{align*}
In light of Lemma \ref{lem:Strichart:|u|_infty} together with the embedding $L^\infty\subset H^2$ ($d\le 3$), we readily obtain
\begin{align*}
\int_0^t \|u(s)\|^{2\sigma}_{L^\infty}\d s & \le C \|u_0\|^{2\sigma}_{H^1}+ CT+C\int_0^t \|u(s)\|^{q_\sigma}_{H^1}\d s+ C\int_0^t \|\Gamma(s)\|^{2\sigma}_H+ \|\triangle\Gamma(s)\|^{2\sigma}_H\d s,
\end{align*}
where $q_\sigma>2$ is the constant from Lemma \ref{lem:Strichart:|u|_infty}. Furthermore, since $\Phi_\alpha(u)$ defined in \eqref{form:Phi} dominates $H^1$ norm, we may infer a positive constant $n_\sigma$ sufficiently large such that
\begin{align*}
\int_0^t \|u(s)\|^{2\sigma}_{L^\infty}\d s \le C (\|u_0\|^{2\sigma}_{H^1}+1+t)+ C \int_0^t c_{1,n_\sigma}\Phi_\alpha(u(s))^{n_\sigma}+n_\sigma\|\Gamma(s)\|^{n_\sigma}_H+n_\sigma \|\triangle\Gamma(s)\|^{n_\sigma}_H\d s,
\end{align*}
for some positive constant $C$ independent of $\lambda$, $t$ and $u_0$. This produces \eqref{ineq:Strichart:int_0^t|u|_infty^2sigma}, as claimed.
\end{proof}

Lastly, we state the following irreducibility condition through Lemma \ref{lem:irreducibility}, asserting that the probability that the solutions eventually return to the origin is uniform with respect to any initial data in a bounded ball. The proof of Lemma \ref{lem:irreducibility} is similar to those of \cite[Estimate (4.13)]{debussche2005ergodicity}
 and \cite[Lemma 4.5]{nguyen2024inviscid} tailored to the setting of unbounded domains. We note that the result of Lemma \ref{lem:irreducibility} appears later in the proof of Lemma \ref{lem:ergodicity:P(ell(k+1)=k+1|l(k)=infty)>epsilon}, which is one of the ingredients for the mixing rate \eqref{ineq:poly-mixing}.

\begin{lemma} \label{lem:irreducibility} 
Suppose that 
 
1. in dimension $d=1,2$, Assumptions \ref{cond:sigma} and \ref{cond:Q:poly-mixing} hold;

2. in dimension $d=3$, Assumptions \ref{cond:sigma} and \ref{cond:Q:poly-mixing} and condition \eqref{cond:sigma:poly-mixing} hold.
 
Then, for all $R,\,r>0$, there exists $T_*=T_*(R,r)>1$ such that for all $t\ge T_*$ and $u_0^1,u_0^2$ satisfying $\Phi_\alpha(u_0^1)+\Phi_\alpha(u_0^2)\le R$, the following holds
\begin{align} \label{ineq:irreducibility}
\P\Big( \Phi_\alpha\big(u(t;u_0^1)\big) + \Phi_\alpha\big(u(t;u_0^2)\big)\le r \Big)\ge \varepsilon_*,
\end{align}
for some positive constant $\varepsilon_*=\varepsilon_*(t,R,r)$ independent of $u_0^2$ and $u_0^2$. 
\end{lemma}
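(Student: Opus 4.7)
The plan is to adapt the two-stage support-theorem argument of \cite{debussche2005ergodicity,nguyen2024inviscid} to the present whole-space setting. In the first step, I invoke the moment estimates \eqref{ineq:moment-bound:E.Phi(u(t))^n<e^(-lambda.t)E.Phi(u_0)+C:defocusing}--\eqref{ineq:moment-bound:E.Phi(u(t))^n<e^(-lambda.t)E.Phi(u_0)+C:focusing} of Lemma \ref{lem:moment-bound:H1}, which give
\begin{align*}
\E\,\Phi_\alpha(u(T_1;u_0^i))\le e^{-c_{1,1}\lambda T_1}R+C(Q,\lambda),\qquad i=1,2,
\end{align*}
uniformly for $u_0^i$ with $\Phi_\alpha(u_0^i)\le R$. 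Choosing $T_1=T_1(R,\lambda)$ large enough that $e^{-c_{1,1}\lambda T_1}R\le 1$, Chebyshev's inequality and a union bound produce a universal radius $R_0=R_0(\lambda,Q)$, independent of $R$, for which
\begin{align*}
\P\big(\Phi_\alpha(u(T_1;u_0^i))\le R_0,\ i=1,2\big)\ge \tfrac{1}{2}.
\end{align*}

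By the Markov property at $T_1$ it is then enough to produce, for any $\tau>0$ large and $r>0$, a positive lower bound on $\P(\Phi_\alpha(u(\tau;w_0^i))\le r,\ i=1,2)$ uniformly over $w_0^i\in\{\Phi_\alpha\le R_0\}$. To this end I consider the stochastic convolution $\Gamma$ appearing in \eqref{form:u(t):mild_solution} and introduce the small-noise event
\begin{align*}
B_\delta=\Big\{\sup_{s\in[0,\tau]}\big(\|\Gamma(s)\|_{H^2}+\|\Gamma(s)\|_{L^\infty}\big)\le \delta\Big\}.
\end{align*}
Under Assumption \ref{cond:Q:poly-mixing} and the embedding $H^3\subset L^\infty$ valid for $d\le 3$, the process $\Gamma$ has continuous $H^2\cap L^\infty$-valued sample paths, and standard small-ball estimates for non-degenerate Gaussian measures on $C([0,\tau];H^2\cap L^\infty)$ yield $\P(B_\delta)>0$ for every $\delta>0$. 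Moreover, after the time shift to $T_1$, $B_\delta$ is independent of $\Fcal_{T_1}$.

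On $B_\delta$ the decomposition $w_i:=u(\cdot\,;w_0^i)-\Gamma$ solves the deterministic PDE
\begin{align*}
\partial_s w_i+\lambda w_i+\i\triangle w_i+\i F_\alpha(w_i+\Gamma)=0,\qquad w_i(0)=w_0^i.
\end{align*}
Running the energy identities that underlie Lemma \ref{lem:moment-bound:H1} without the martingale terms, and expanding the nonlinearity as $|w_i+\Gamma|^{2\sigma}(w_i+\Gamma)=|w_i|^{2\sigma}w_i+(\text{terms at least linear in }\Gamma)$, with cross terms bounded via H\"older and $\|\Gamma\|_{L^\infty\cap H^2}\le\delta$, I would obtain
\begin{align*}
\frac{\d}{\d s}\Phi_\alpha(w_i)\le -c\lambda\,\Phi_\alpha(w_i)+C(\delta),
\end{align*}
with $C(\delta)\to 0$ as $\delta\to 0$. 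Gronwall then yields $\Phi_\alpha(w_i(\tau))\le e^{-c\lambda\tau}R_0+C(\delta)/(c\lambda)$, and since $\Phi_\alpha(u(\tau;w_0^i))\le 2\Phi_\alpha(w_i(\tau))+c'\delta^2$ on $B_\delta$, taking $\tau$ large and then $\delta$ small drives this below $r$ for both $i=1,2$. Combining with Stage 1 via the Markov property gives \eqref{ineq:irreducibility} with $T_*=T_1+\tau$ and $\varepsilon_*=\tfrac{1}{2}\P(B_\delta)>0$.

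The main obstacle is the deterministic energy estimate on $B_\delta$: controlling the remainder $F_\alpha(w+\Gamma)-F_\alpha(w)$ in $\rbb^d$ by the dissipative term $-\lambda\Phi_\alpha(w)$ requires Sobolev-Lebesgue interpolation analogous to \eqref{ineq:<u^(2sigma),Qe_k>}, and in the focusing case one must additionally monitor the $\|w\|_H^{2\sigma_d}$ contribution to $\Phi_1$. Beyond this point, the scheme is a direct transcription of \cite[Estimate (4.13)]{debussche2005ergodicity} and \cite[Lemma 4.5]{nguyen2024inviscid} to $\rbb^d$.
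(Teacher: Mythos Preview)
Your overall scheme --- condition on a small-noise event for the stochastic convolution $\Gamma$ and run the deterministic energy identity for $w=u-\Gamma$ --- is precisely the route the paper takes; the preliminary Chebyshev step to a universal ball is a harmless addition. The gap is the displayed inequality $\frac{\d}{\d s}\Phi_\alpha(w_i)\le -c\lambda\,\Phi_\alpha(w_i)+C(\delta)$: the remainder from $F_\alpha(w+\Gamma)-F_\alpha(w)$ does \emph{not} reduce to a constant in $w$. Carrying out the $\|\grad w\|_H^2$ identity produces cross terms of the form $\|\Gamma\|_{H^3}\|w\|_{H^1}^{1+4\sigma}$, $\|\Gamma\|_{H^3}^{1+2\sigma}\|w\|_{H^1}^{1+2\sigma}$, and, when $\sigma>\tfrac12$, a term $\|\Gamma\|_{H^3}\|w\|_{L^\infty}^{2\sigma}$ coming from $\big\la |w|^{2\sigma-2}w^2-|w+\Gamma|^{2\sigma-2}(w+\Gamma)^2,(\grad w)^2\big\ra_H$. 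The $\Phi_\alpha(w)^n$--type terms with $n>1$ force a genuine bootstrap: the paper introduces the stopping time at which $\Phi_\alpha(u_1)+\Phi_\alpha(u_2)$ first exceeds a fixed multiple of $R$, and shows that on the small-$\Gamma$ event this time cannot occur before $t$. The $\|w\|_{L^\infty}^{2\sigma}$ term is worse: in $d=2,3$ one has $H^1\not\subset L^\infty$, so it cannot be absorbed by Sobolev interpolation of the kind in \eqref{ineq:<u^(2sigma),Qe_k>}. The paper integrates in time and invokes the Strichartz bound of Lemma~\ref{lem:Strichart:|u|_infty} on $\int_0^t\|w(s)\|_{L^\infty}^{2\sigma}\d s$; this is exactly where the restriction \eqref{cond:sigma:poly-mixing} in $d=3$ is used, and without it the argument does not close.

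A secondary point: controlling $\Gamma$ in $H^2\cap L^\infty$ is not enough. The $\|\grad w\|_H^2$ computation throws out $\grad\Gamma$ paired against $|w+\Gamma|^{2\sigma}\grad w$, and to close this in $d=3$ for $\sigma$ up to $3/2$ you need $\grad\Gamma\in L^\infty$, i.e.\ $\Gamma\in H^3$. This is why the paper works with $\sup_s\|\Gamma(s)\|_{H^3}$ throughout and why Assumption~\ref{cond:Q:poly-mixing} demands $H^3$-valued noise rather than $H^2$.
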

\begin{proof}
Letting $\Gamma$ be the stochastic convolution solving \eqref{eqn:Schrodinger:Gamma}, we set $v=u-\Gamma$. From \eqref{eqn:Schrodinger:original} and \eqref{eqn:Schrodinger:Gamma}, observe that $v$ satisfies the equation
\begin{align} \label{eqn:v}
\frac{\d}{\d t}v+ \i\triangle v+ \lambda v+ \i \alpha |v+\Gamma|^{2\sigma} (v+\Gamma)=0,
\end{align}
with the initial data $v(0)=u(0)$. Recalling $\Phi_\alpha$ from \eqref{form:Phi}, we aim to produce an estimate on $\Phi_\alpha(v)$ in terms of $\Gamma$. We start with computing $\frac{\d}{\d t}\|\grad v\|_H^2$ while making use of integration by parts.
\begin{align*}
&\frac{\d}{\d t}\|\grad v\|_H^2\\
 & = -2\lambda \|\grad v\|_H^2-\alpha \i \big\la \grad\big[|v+\Gamma|^{2\sigma} \big](v+\Gamma),\grad v  \big\ra_H-\alpha \i \big\la |v+\Gamma|^{2\sigma} \grad(v+\Gamma),\grad v  \big\ra_H\\
&\qquad +\alpha \i \big\la \grad\big[|v+\Gamma|^{2\sigma} \big](\bar{v}+\bar{\Gamma}),\grad \bar{v}  \big\ra_H +\alpha \i \big\la |v+\Gamma|^{2\sigma} \grad(\bar{v}+\bar{\Gamma}),\grad \bar{v}  \big\ra_H \\
&= -2\lambda \|\grad v\|_H^2-2\alpha(\sigma+1)\Re\Big( \i \big\la |v+\Gamma|^{2\sigma}\grad \Gamma,\grad v \big\ra_H\Big) -2\alpha \sigma\Re\Big( \i \big\la |v+\Gamma|^{2\sigma-2}(v+\Gamma)^2\grad\bar{\Gamma},\grad v  \big\ra_H\Big)\\
&\qquad - 2\alpha\sigma\Re \Big(\i \big\la |v+\Gamma|^{2\sigma-2}(v+\Gamma)^2,(\grad v)^2   \big\ra_H\Big).
\end{align*}
We employ Holder's inequality to infer
\begin{align*}
-\alpha(\sigma+1)&2\Re\Big( \i \big\la |v+\Gamma|^{2\sigma}\grad \Gamma,\grad v \big\ra_H\Big) -\alpha \sigma2\Re\Big( \i \big\la |v+\Gamma|^{2\sigma-2}(v+\Gamma)^2\grad\bar{\Gamma},\grad v  \big\ra_H\Big)\\
&\le (4\sigma+2)\|\grad \Gamma\|_{L^\infty}\|\grad v\|_H\|v+\Gamma\|^{2\sigma}_{L^{4\sigma}}.
\end{align*}
To further bound the above right-hand side, we employ Sobolev embedding $H^2\subset L^\infty$ and $H^1\subset L^p$ ($p\le \infty$, $d=1$, or $p<\infty$, $d=2$ or $p\le 6$, $d=3$) to deduce
\begin{align*}
\|\grad \Gamma\|_{L^\infty}\|\grad v\|_H\|v+\Gamma\|^{2\sigma}_{L^{4\sigma}}\le c\|\Gamma\|_{H^3}\|v\|^{1+2\sigma}_{H^1}+c\|\Gamma\|^{1+2\sigma}_{H^3}\|v\|_{H^1}.
\end{align*}
It follows that
\begin{align} \label{ineq:d/dt.|grad.v|^2}
\frac{\d}{\d t}\|\grad v\|^2_H&\le -2\lambda \|\grad v\|_H^2+c\|\Gamma\|_{H^3}\|v\|^{1+2\sigma}_{H^1}+c\|\Gamma\|^{1+2\sigma}_{H^3}\|v\|_{H^1}  \notag \\
&\qquad- 2\alpha\sigma\Re \Big(\i \big\la |v+\Gamma|^{2\sigma-2}(v+\Gamma)^2,(\grad v)^2   \big\ra_H\Big).
\end{align}

Next, considering $\|v\|^{2+2\sigma}_{L^{2+2\sigma}}$, we have
\begin{align*}
&\frac{\d}{\d t}\Big(\frac{-\alpha}{1+\sigma}\|v\|^{2+2\sigma}_{L^{2+2\sigma}}\Big)\\
&= 2\alpha\lambda\|v\|^{2+2\sigma}_{L^{2+2\sigma}}-2\alpha \Re\Big(  \i \big\la  |v|^{2\sigma}v,\triangle v\big\ra_H \Big)+2\Re\Big(  \i\big\la |v|^{2\sigma}|v+\Gamma|^{2\sigma}(v+\Gamma),v\big\ra_H \Big).
\end{align*}
On the one hand, we employ integration by parts on the term involving $\triangle v$ and obtain
\begin{align*}
-2\alpha \Re\Big(  \i \big\la  |v|^{2\sigma}v,\triangle v\big\ra_H \Big) = 2\alpha\sigma\Re \Big(\i \big\la |v|^{2\sigma-2}(v)^2,(\grad v)^2   \big\ra_H\Big).
\end{align*}
On the other hand, we note that
\begin{align*}
2\Re\Big(  \i\big\la |v|^{2\sigma}|v+\Gamma|^{2\sigma}(v+\Gamma),v\big\ra_H \Big) 
&= 2\Re\Big(  \i\big\la |v|^{2\sigma}|v+\Gamma|^{2\sigma}\Gamma,v\big\ra_H \Big)\\
& \le c \big\la |v|^{2\sigma}(|v|^{2\sigma}+|\Gamma|^{2\sigma})|\Gamma|,|v|\big\ra_H\\
&\le c \big( \|v\|^{1+4\sigma}_{L^{1+4\sigma}}\|\Gamma\|_{L^\infty}+\|v\|^{1+2\sigma}_{L^{1+2\sigma}}\|\Gamma\|^{1+2\sigma}_{L^\infty}  \big).
\end{align*}
We invoke Sobolev embedding once again to deduce
\begin{align*}
2\Re\Big(  \i\big\la |v|^{2\sigma}|v+\Gamma|^{2\sigma}(v+\Gamma),v\big\ra_H \Big) \le c \big( \|v\|^{1+4\sigma}_{H^1}\|\Gamma\|_{H^3}+\|v\|^{1+2\sigma}_{H^1}\|\Gamma\|^{1+2\sigma}_{H^3}  \big).
\end{align*}
It follows that 
\begin{align}\label{ineq:d/dt.|v|^(2+2sigma)_(L^(2+2sigma))}
&\frac{\d}{\d t}\Big(\frac{-\alpha}{1+\sigma}\|v\|^{2+2\sigma}_{L^{2+2\sigma}}\Big) \notag \\
&\le 2\alpha\lambda\|v\|^{2+2\sigma}_{L^{2+2\sigma}}+ 2\alpha\sigma\Re \Big(\i \big\la |v|^{2\sigma-2}(v)^2,(\grad v)^2   \big\ra_H\Big)\notag \\
&\qquad+ c \big( \|v\|^{1+4\sigma}_{H^1}\|\Gamma\|_{H^3}+\|v\|^{1+2\sigma}_{H^1}\|\Gamma\|^{1+2\sigma}_{H^3}+\|\Gamma\|_{H^3}\|v\|^{1+2\sigma}_{H^1}+\|\Gamma\|^{1+2\sigma}_{H^3}\|v\|_{H^1}  \big).
\end{align}

From \eqref{ineq:d/dt.|grad.v|^2} and \eqref{ineq:d/dt.|v|^(2+2sigma)_(L^(2+2sigma))}, we obtain
\begin{align*}
& \frac{\d}{\d t}\Big(\|\grad v\|^2_H+\frac{-\alpha}{1+\sigma}\|v\|^{2+2\sigma}_{L^{2+2\sigma}}\Big)\\
&\le -2\lambda\big( \|\grad v\|^2_{H^1}-\alpha \|v\|^{2+2\sigma}_{L^{2+2\sigma}} \big)\\
&\qquad+c\big(\|v\|^{1+2\sigma}_{H^1}\|\Gamma\|^{1+2\sigma}_{H^3}+ \|v\|^{1+4\sigma}_{H^1}\|\Gamma\|_{H^3}+\|\Gamma\|_{H^3}\|v\|^{1+2\sigma}_{H^1}+\|\Gamma\|^{1+2\sigma}_{H^3}\|v\|_{H^1}\big)\\
&\qquad +2\alpha\sigma\Re \Big(\i \big\la |v|^{2\sigma-2}(v)^2- |v+\Gamma|^{2\sigma-2}(v+\Gamma)^2,(\grad v)^2   \big\ra_H\Big).
\end{align*}
Regarding the last term on the above right-hand side, in view of Lemma \ref{lem:|z|^(2-2sigma)z^2-|x|^(2-2sigma)<|x-z|^2sigma}, on the one hand, when $\sigma\in(0,1/2]$, inequality \eqref{ineq:|z|^(2-2sigma)z^2-|x|^(2-2sigma)<|x-z|^2sigma:sigma<1/2} implies
\begin{align*}
&2\alpha\sigma\Re \Big(\i \big\la |v|^{2\sigma-2}(v)^2- |v+\Gamma|^{2\sigma-2}(v+\Gamma)^2,(\grad v)^2   \big\ra_H\Big)\\
&\le c\, \la |\Gamma|^{2\sigma},|\grad v|^2\ra_H\le c\|\Gamma\|^{2\sigma}_{L^\infty}\|\grad v\|^2_H\le c\|\Gamma\|^{2\sigma}_{H^3}\|\grad v\|^2_H.
\end{align*}
On the other hand, when $\sigma>1/2$, we invoke \eqref{ineq:|z|^(2-2sigma)z^2-|x|^(2-2sigma)<|x-z|^2sigma:sigma>1/2} to infer
\begin{align*}
&2\alpha\sigma\Re \Big(\i \big\la |v|^{2\sigma-2}(v)^2- |v+\Gamma|^{2\sigma-2}(v+\Gamma)^2,(\grad v)^2   \big\ra_H\Big)\\
&\le c\big\la |\Gamma|(|v+\Gamma|^{2\sigma-1}+|v|^{2\sigma-1} ),|\grad v|^2\big\ra_H \\
&\le c\big(\|\Gamma\|^{2\sigma}_{H^3}+\|\Gamma\|_{H^3}\|v\|^{2\sigma-1}_{L^\infty}  \big)\|\grad v\|^2_H\\
&\le c \big(\|\Gamma\|^{2\sigma}_{H^3}\|\grad v\|^2_H +\|\Gamma\|_{H^3}\|v\|^{2\sigma}_{L^\infty}+\|\Gamma\|_{H^3}\|\grad v\|^{4\sigma}_H\big).
\end{align*}
In the last implication above, we invoked Holder's inequality. It follows from both cases that we may infer a positive constant $c$ such that 
\begin{align*}
&2\alpha\sigma\Re \Big(\i \big\la |v|^{2\sigma-2}(v)^2- |v+\Gamma|^{2\sigma-2}(v+\Gamma)^2,(\grad v)^2   \big\ra_H\Big)\\
&\le c \big( \|\Gamma\|^{2\sigma}_{H^3}\|\grad v\|^2_H +\|\Gamma\|_{H^3}\|v\|^{2\sigma}_{L^\infty}+\|\Gamma\|_{H^3}\|\grad v\|^{4\sigma}_H\big),
\end{align*}
whence
\begin{align}\label{ineq:d/dt(|grad.v|^2+|v|^(2+2sigma)_(L^(2+2sigma)))}
& \frac{\d}{\d t}\Big(\|\grad v\|^2_H+\frac{-\alpha}{1+\sigma}\|v\|^{2+2\sigma}_{L^{2+2\sigma}}\Big)\notag \\
&\le -2\lambda\big( \|\grad v\|^2_{H^1}-\alpha \|v\|^{2+2\sigma}_{L^{2+2\sigma}} \big)+c\|\Gamma\|_{H^3}\|v\|^{2\sigma}_{L^\infty}+c\big( \|\Gamma\|^{2\sigma}_{H^3}\| v\|^2_{H^1} +\|\Gamma\|_{H^3}\| v\|^{4\sigma}_{H^1}\big) \notag \\
&\qquad+c\big(\|\Gamma\|^{1+2\sigma}_{H^3}\|v\|^{1+2\sigma}_{H^1}+\|\Gamma\|_{H^3} \|v\|^{1+4\sigma}_{H^1}+\|\Gamma\|_{H^3}\|v\|^{1+2\sigma}_{H^1}+\|\Gamma\|^{1+2\sigma}_{H^3}\|v\|_{H^1}\big) .
\end{align}
 
Next, with regard to $\|v\|_H$, from expression \eqref{form:Phi}, we first consider the case defocusing case when $\alpha=-1$.
\begin{align*}
\frac{\d}{\d t}\|v\|_H^2 & = -2\lambda\|v\|_H^2-2\alpha\Re\Big( \i \big\la |v+\Gamma|^{2\sigma}\Gamma,v \big\ra_H\Big)\\
&\le -2\lambda\|v\|^2_H+c\big(\|v\|^{1+2\sigma}_{L^{1+2\sigma}}\|\Gamma\|_{L^\infty}+\|\Gamma\|^{1+2\sigma}_{L^\infty}\|v\|_{L^1}  \big)\\
&\le -2\lambda\|v\|^2_H+c\big(\|v\|^{1+2\sigma}_{H^1}\|\Gamma\|_{H^3}+\|\Gamma\|^{1+2\sigma}_{H^3}\|v\|_{H^1}  \big).
\end{align*}
Similarly, when $\alpha=1$, recalling $\sigma_d=1+ 2\sigma /(2-\sigma d)$ defined in \eqref{form:Phi}, we employ the above estimate to see that
\begin{align*}
\frac{\d}{\d t}\kappa\|v\|^{2\sigma_d}_{H}&=\sigma_d\kappa\|v\|^{2(\sigma_d-1)}_H\frac{\d}{\d t}\|v\|^2_H\\
&\le -2\sigma_d\lambda\kappa\|v\|^{2\sigma_d}_H+c\|v\|^{2(\sigma_d-1)}_H\big(\|v\|^{1+2\sigma}_{H^1}\|\Gamma\|_{H^3}+\|\Gamma\|^{1+2\sigma}_{H^3}\|v\|_{H^1}  \big)
\end{align*}
Together with \eqref{ineq:d/dt(|grad.v|^2+|v|^(2+2sigma)_(L^(2+2sigma)))}, the following holds
\begin{align*} 
&\frac{\d}{\d t} \Phi_\alpha(v)  \le -c \lambda\Phi_\alpha(v)+ C\|\Gamma\|_{H^3}\|v\|^{2\sigma}_{L^\infty} \notag \\
&\qquad+ C\big( \|\Gamma\|_{H^3}^{1+2\sigma}+ \|\Gamma\|_{H^3}^{1+4\sigma}+\|\Gamma\|_{H^3}^{2\sigma}+\|\Gamma\|_{H^3} \big)\big(\Phi_\alpha(v)^{n}+1\big),
\end{align*}
for some positive constants $c,C$ and $n$ independent of $\lambda$, $v$ and $\Gamma$. In turn, Gronwall's inequality implies that
\begin{align} \label{ineq:Phi(v)<|phi(u_0)+int_0^t|v|_infty}
 \Phi_\alpha(v(t))  &\le e^{-c\lambda t}\Phi(u_0) + C\sup_{s\in[0,t]}\|\Gamma(s)\|_{H^3}\int_0^t  \|v(s)\|^{2\sigma}_{L^\infty}\d s  \notag \\
&\qquad+ C\,t\sup_{s\in[0,t]}\big[ \|\Gamma(s)\|_{H^3}^{1+2\sigma}+ \|\Gamma(s)\|_{H^3}^{1+4\sigma}+\|\Gamma(s)\|_{H^3}^{2\sigma}+\|\Gamma(s)\|_{H^3} \big] \notag \\
&\qquad\qquad\times\sup_{s\in[0,t]}\big[\Phi_\alpha(v(s))^{n}+1\big].
\end{align}
At this point, there are two cases to be considered depending on the dimension $d$.

Case 1: $d=1$. In this case, thanks to the embedding $H^1(\rbb)\subset L^\infty(\rbb)$, we immediately obtain
\begin{align*}
\sup_{s\in[0,t]}\|\Gamma(s)\|_{H^3}\int_0^t  \|v(s)\|^{2\sigma}_{L^\infty}\d s \le C \,t \sup_{s\in[0,t]}\|\Gamma(s)\|_{H^3}\sup_{s\in[0,t]}\big[\Phi_\alpha(v(s))^{n}+1\big],
\end{align*}
whence
\begin{align*}
\Phi_\alpha(v(t))  &\le e^{-c\lambda t}\Phi(u_0) + C\,t\sup_{s\in[0,t]}\big[ \|\Gamma(s)\|_{H^3}^{1+2\sigma}+ \|\Gamma(s)\|_{H^3}^{1+4\sigma}+\|\Gamma(s)\|_{H^3}^{2\sigma}+\|\Gamma(s)\|_{H^3} \big] \notag \\
&\qquad\qquad\times\sup_{s\in[0,t]}\big[\Phi_\alpha(v(s))^{n}+1\big].
\end{align*}

Case 2: $d=2,3$. In this case, we aim to control the term involving $\|v\|_{L^\infty}$ on the right-hand side of \eqref{ineq:Phi(v)<|phi(u_0)+int_0^t|v|_infty} by exploiting the Strichartz-typed estimates established in Lemma \ref{lem:Strichart:|u|_infty}. To this end, from \eqref{eqn:v}, we note that $v$ can be recast as follow.
\begin{align*}
v(s) =e^{-\lambda s}S(s)u_0+\int_0^s e^{-\lambda(s-\ell)}S(s-\ell)F_\alpha(v(\ell)+\Gamma(\ell))\d \ell,
\end{align*}
whence
\begin{align*}
&\int_0^t \|v(s)\|^{2\sigma}_{L^\infty}\d s \\
& \le c \int_0^t \|e^{-\lambda s}S(s)u_0\|^{2\sigma}_{L^\infty}\d s + c\int_0^t \Big\|  \int_0^s S(s-\ell)F_\alpha(v(\ell)+\Gamma(\ell))\d \ell \Big\|^{2\sigma}_{L^\infty}\d s.
\end{align*}
In view of \eqref{ineq:Strichart:|u_0|_infty}, we readily have
\begin{align*}
\int_0^t \|e^{-\lambda s}S(s)u_0\|^{2\sigma}_{L^\infty}\d s\le c\|u_0\|^{2\sigma}_{H^1},
\end{align*}
whereas \eqref{ineq:Strichart:|u|_infty} implies
\begin{align*}
&\int_0^t \Big\|  \int_0^s S(s-\ell)F_\alpha(v(\ell)+\Gamma(\ell))\d \ell \Big\|^{2\sigma}_{L^\infty}\d s\\
&\le c\Big(t+\int_0^t \|v(s)\|^{q_\sigma}_{H^1}+ \|\Gamma(s)\|^{q_\sigma}_{H^1}\d s \Big).
\end{align*}
In the above, $q_\sigma$ is the constant from Lemma \ref{lem:Strichart:|u|_infty}. Since $\Phi_\alpha$ dominates $H^1$-norm, we may pick $n$ larger (if necessary) to infer
\begin{align*}
\int_0^t  \|v(s)\|^{2\sigma}_{L^\infty}\d s \le c\, t \sup_{s\in[0,t]}\big[\Phi_\alpha(v)^n+\|\Gamma(s)\|^n+1 \big].
\end{align*}
It follows from \eqref{ineq:Phi(v)<|phi(u_0)+int_0^t|v|_infty} that
\begin{align*}
 \Phi_\alpha(v(t)) &
 \le e^{-c\lambda t}\Phi(u_0)+C \|u_0\|_{H^1}^{2\sigma}\sup_{s\in[0,t]}\|\Gamma(s)\|_{H^3} \\
&\qquad+ C\,t\sup_{s\in[0,t]}\big[ \|\Gamma(s)\|_{H^3}^{1+2\sigma}+ \|\Gamma(s)\|_{H^3}^{1+4\sigma}+\|\Gamma(s)\|_{H^3}^{2\sigma}+\|\Gamma(s)\|_{H^3} \big]\\
&\qquad\qquad\times  \sup_{s\in[0,t]}\big[\Phi_\alpha(v)^n+\|\Gamma(s)\|^{n}_{H^3}+1 \big] .
\end{align*}

Altogether, from both cases, we obtain the following bound
\begin{align} \label{ineq:Phi(u_1)+Phi(u_2))}
&\Phi_\alpha\big(u(t;u_0^1)\big)+\Phi_\alpha\big(u(t;u_0^2)\big) \notag \\
&\le Ce^{-c\lambda t} \big[\Phi_\alpha(u_0^1)+\Phi_\alpha(u_0^2)\big]+C\big[\big(\Phi_\alpha(u_0^1)^n+\Phi_\alpha(u_0^2)\big)^n+1\big]\sup_{s\in[0,t]}\|\Gamma(s)\|_{H^3} \notag\\
& \qquad+ C\,t\sup_{s\in[0,t]}\big[ \|\Gamma(s)\|_{H^3}^{1+2\sigma}+ \|\Gamma(s)\|_{H^3}^{1+4\sigma}+\|\Gamma(s)\|_{H^3}^{2\sigma}+\|\Gamma(s)\|_{H^3} \big]  \notag\\
&\qquad\qquad\times \sup_{s\in[0,t]}\big[\big(\Phi_\alpha\big(u(t;u_0^1)\big)+\Phi_\alpha\big(u(t;u_0^2)\big)\big)^n+\|\Gamma(s)\|^{n}_{H^3}+1 \big] .
\end{align}
In the above, we emphasize that the positive constants $C$ and $c$ are independent of $u_0^1$, $u_0^2$, $t$ and $\lambda$. 

Turning back to \eqref{ineq:irreducibility}, we shall follow closely the arguments of \cite[(4.13)]{debussche2005ergodicity} and \cite[Lemma 4.5]{nguyen2024inviscid} adapting to our settings so as to produce the desired irreducible property. To see this, let $\tau$ be the stopping time defined as
\begin{align*}
\tau=\inf\{t\ge 0:\big(\Phi_\alpha\big(u(t;u_0^1)\big)+\Phi_\alpha\big(u(t;u_0^2)\big)\big)^n\ge 3CR\},
\end{align*}
where $C$ is the same constant as in \eqref{ineq:Phi(u_1)+Phi(u_2))}. Also, for each $t\ge 1$, consider the event
\begin{align*}
B=\Big\{ \sup_{s\in[0,t]} \big[ \|\Gamma(s)\|_{H^3}^{1+2\sigma}+ \|\Gamma(s)\|_{H^3}^{1+4\sigma}&+\|\Gamma(s)\|_{H^3}^{2\sigma}+\|\Gamma(s)\|_{H^3}+\|\Gamma\|^n_{H^3} \big]\\
& \le \frac{R}{t^2(R^n+3CR+R+1)}\Big\}.
\end{align*}
We claim that conditioned on $B$, $\tau\ge t$ a.s. Indeed, suppose by contradiction, $\tau\le t$. From \eqref{ineq:Phi(u_1)+Phi(u_2))}, since $\Phi_\alpha(u_0^1)+\Phi_\alpha(u_0^2)\le R$, observe that
\begin{align*}
&\Phi_\alpha\big(u(\tau;u_0^1)\big)+\Phi_\alpha\big(u(\tau;u_0^2)\big)\\
& \le Ce^{-c\tau}R +C(R^n+1)\cdot \frac{R}{t^2(R^n+3CR+R+1)}\\
&\qquad+ C\, \tau \cdot \frac{R}{t^2(R^n+3CR+R+1)} \Big[ 3CR+ \frac{R}{t^2(R^n+3CR+R+1)}+1  \Big]\\
&<3CR,
\end{align*}
which contradicts the definition of $\tau$. It follows that $\tau\ge t$. Now, let $T^*$ be given by
\begin{align} \label{form:T^*}
T^* = \max\Big\{\frac{3CR}{cr},\frac{3CR}{r},1  \Big\}.
\end{align}
For all $t\ge T^*$, conditioning on $B$ again yields
\begin{align*}
&\Phi_\alpha\big(u(\tau;u_0^1)\big)+\Phi_\alpha\big(u(\tau;u_0^2)\big)\\
& \le Ce^{-c t}R +C(R^n+1)\cdot \frac{R}{t^2(R^n+3CR+R+1)}\\
&\qquad +C \frac{R}{t(R^n+3CR+R+1)} \Big[ 3CR+ \frac{R}{t^2(R^n+3CR+R+1)}+1  \Big]\\
&\le CR\Big( e^{-ct} +\frac{2}{t}\Big)
\end{align*}
We invoke the elementary inequality $e^a\ge a$ to further deduce
\begin{align*}
\Phi_\alpha\big(u(\tau;u_0^1)\big)+\Phi_\alpha\big(u(\tau;u_0^2)\big) \le r.
\end{align*}
In view of Assumption \ref{cond:Q:poly-mixing}, the law of $\Gamma$ is full in $C([0,t];H^3)$, implying
\begin{align*}
\P\Big(\Phi_\alpha\big(u(\tau;u_0^1)\big)+\Phi_\alpha\big(u(\tau;u_0^2)\big) \le r\Big)\ge \P(B)>0.
\end{align*} 
This produces \eqref{ineq:irreducibility}, thereby finishing the proof.
\end{proof}

\section{Unique ergodicity and Polynomial Mixing} \label{sec:poly-mixing}

In this section, we establish Theorem \ref{thm:unique-ergodicity} concerning the uniqueness of the invariant probability measure and Theorem \ref{thm:poly-mixing} giving the algebraic convergence rate toward equilibrium for the solutions of \eqref{eqn:Schrodinger:original}. Particularly, in Section \ref{sec:poly-mixing:proof-unique-ergodicity}, we provide the proof of Theorem \ref{thm:unique-ergodicity} whereas in Section \ref{sec:poly-mixing:proof-main-result}, we review the coupling argument developed in \cite{debussche2005ergodicity,odasso2006ergodicity} and supply the proof of Theorem \ref{thm:poly-mixing}. In Section \ref{sec:poly-mixing:proof-aux-result}, we prove the auxiliary results employed to conclude Theorem \ref{thm:poly-mixing} while making use of the moment estimates collected in Section \ref{sec:moment-bound}.

\subsection{Proof of Theorem \ref{thm:unique-ergodicity}} \label{sec:poly-mixing:proof-unique-ergodicity}

Following the approach of \cite{brzezniak2023ergodic, glatt2021long}, the unique ergodicity argument consists of two main steps. First of all, we derive moment bounds on the regularity of $\nu$ with respect to $L^\infty$ norm. This is summarized in Lemma \ref{lem:nu:regularity} below. Then, we employ Birkhoff's Ergodic Theorem to assert that the distance between two solutions starting from two distinct initial conditions can be approximated by regularity of $\nu$ independently of $\lambda$. Altogether, we may take $\lambda$ sufficiently large to conclude that the two solutions must converge to one another, yielding the uniqueness of $\nu$.

We start the procedure by stating and proving Lemma \ref{lem:nu:regularity}, giving an estimate on the support of an invariant probability measure $\nu$.

\begin{lemma}\label{lem:nu:regularity}
Under the same hypotheis of Theorem \ref{thm:unique-ergodicity}, let $\nu$ be an invariant probability measure of $P_t$. Then, the following holds
    \begin{align} \label{ineq:nu:L^infty}
        \int_{H}\|u\|_{L^\infty}^{2\sigma}\nu(\d u)<C,
    \end{align}
    for some positive constant $C$ independent of $\nu$ and $\lambda$.
\end{lemma}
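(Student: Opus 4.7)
The plan is to combine the pathwise Strichartz bound of Lemma~\ref{lem:Strichart:int_0^t|u|_infty^2sigma} (or a suitable variant valid under the hypotheses of Theorem~\ref{thm:unique-ergodicity}) with the Ito-based moment estimates of Lemma~\ref{lem:moment-bound:H1}, using the invariance of $\nu$ to convert time averages along trajectories into spatial averages against $\nu$. The whole argument reduces to two steps: first controlling $\int \Phi_\alpha^{n_\sigma}\,\d\nu$ uniformly in $\nu$ and $\lambda$, then substituting that into the time-integrated Strichartz estimate.

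First I would establish the preliminary bound $\int_H \Phi_\alpha(u)^{n_\sigma}\,\nu(\d u) \le C/\lambda$ uniformly in $\nu$ and $\lambda$ large. Starting from the pathwise inequality \eqref{ineq:Phi(u)^n+int.Phi(u)^n+|Gamma|^n<Phi(u_0)+M_alpha}, discarding the positive stochastic-convolution contributions on the left, taking conditional expectation given a deterministic $u_0 \in H^1$, and dividing by $t$ yields
\begin{align*}
\lambda c_{1,n_\sigma}\cdot \tfrac{1}{t}\int_0^t \E\Phi_\alpha(u(s;u_0))^{n_\sigma}\,\d s \le \tfrac{\Phi_\alpha(u_0)^{n_\sigma}}{t} + K_{n_\sigma}\,Q_{1,\alpha,n_\sigma},
\end{align*}
with $Q_{1,\alpha,n_\sigma}$ as in \eqref{form:Q_(1,alpha,n)} remaining bounded for $\lambda \ge 1$. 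Integrating against $\nu$, invoking the invariance identity $\int \E\Phi_\alpha(u(s;u_0))^{n_\sigma}\nu(\d u_0) = \int \Phi_\alpha^{n_\sigma}\,\d\nu$ (valid for non-negative integrands by monotone convergence), and letting $t \to \infty$ then produces the claim. The apparent circularity when both sides could a priori equal $+\infty$ is dispelled by first truncating $\Phi_\alpha^{n_\sigma}\wedge M$ and sending $M \uparrow \infty$ via monotone convergence, or alternatively by applying Birkhoff's theorem within each ergodic component of $\nu$ after observing that the martingale piece of the pathwise estimate is $o(t)$ almost surely.

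With this moment bound in hand, I would take expectation in the Strichartz pathwise estimate, integrate against $u_0 \sim \nu$, divide by $t$, and send $t \to \infty$. Invariance rewrites the left-hand side as $\int_H \|u\|^{2\sigma}_{L^\infty}\,\nu(\d u)$, while the $\Phi_\alpha^{n_\sigma}$ contribution on the right collapses to $C_\sigma c_{1,n_\sigma}\int \Phi_\alpha^{n_\sigma}\,\d\nu = O(1/\lambda)$ by the preliminary step. The stochastic-convolution expectations $\E\|\Gamma(s)\|^{n_\sigma}_H$ and $\E\|\triangle \Gamma(s)\|^{n_\sigma}_H$ are uniformly bounded in $s$ and $\lambda \ge 1$ by the standard moment calculus of the linear OU-type equation solved by $\Gamma$ (indeed decaying like $\lambda^{-n_\sigma/2}$), and the boundary contribution $C_\sigma(\|u_0\|^{2\sigma}_{H^1}+1+t)/t$ converges to $C_\sigma$ in the limit because $\int \|u_0\|^{2\sigma}_{H^1}\,\d\nu$ is dominated by $\int \Phi_\alpha\,\d\nu$ (as $\Phi_\alpha$ controls the $H^1$ norm). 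Assembling these pieces produces the desired inequality \eqref{ineq:nu:L^infty} with $C$ independent of $\nu$ and of $\lambda$ in the large-damping regime. The genuine technical hurdle throughout is the circularity in the preliminary step; once that is handled by truncation plus monotone convergence or ergodic decomposition, the remainder reduces to Fubini and invariance.
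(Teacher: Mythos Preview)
Your proposal has a genuine gap concerning the hypotheses. Lemma~\ref{lem:nu:regularity} is stated under the hypotheses of Theorem~\ref{thm:unique-ergodicity}, which only imposes Assumption~\ref{cond:Q:well-posed} ($H^1$-regularity of the noise) and, in dimension $d=3$, the range $0<\sigma<3/2$. However, the pathwise estimate of Lemma~\ref{lem:Strichart:int_0^t|u|_infty^2sigma} that you invoke requires Assumption~\ref{cond:Q:poly-mixing} ($H^3$-regularity) and, when $d=3$, the restriction $\sigma>1/6$. In particular, the terms $\|\triangle\Gamma(s)\|_H^{n_\sigma}$ appearing on the right-hand side of \eqref{ineq:Strichart:int_0^t|u|_infty^2sigma} are not controlled under Assumption~\ref{cond:Q:well-posed} alone. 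The paper makes this point explicitly in the Remark immediately following the proof of Lemma~\ref{lem:nu:regularity}. Your parenthetical ``or a suitable variant valid under the hypotheses of Theorem~\ref{thm:unique-ergodicity}'' acknowledges the issue but does not resolve it: producing such a variant is precisely the work.

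The paper's route sidesteps this by decomposing the mild solution and treating each piece separately. The deterministic pieces $e^{-\lambda t}S(t)u_0$ and $\int_0^t e^{-\lambda(t-s)}S(t-s)F_\alpha(u(s))\,\d s$ are handled via the purely deterministic Strichartz bounds of Lemma~\ref{lem:Strichart:|u|_infty}, which involve only $\|u\|_{H^1}$ and hence feed into Lemma~\ref{lem:moment-bound:H1}. The stochastic convolution $\Gamma$ is bounded directly in $L^\infty$ via the stochastic Strichartz estimate \eqref{ineq:E.int.|Gamma|^2sigma_L^infty}, which requires only $H^1$-regularity of $Q$. This yields $\E\int_0^1\|u(t;u_0)\|^{2\sigma}_{L^\infty}\,\d t \le C(\|u_0\|^q_{H^1}+1)$, after which invariance on the fixed interval $[0,1]$ plus a truncation $g_R=\|\cdot\|^{2\sigma}_{L^\infty}\wedge R$ and the already-known bound $\int\|u_0\|^n_{H^1}\,\nu(\d u_0)<\infty$ close the argument. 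Your preliminary moment bound on $\int\Phi_\alpha^{n_\sigma}\,\d\nu$ and the $t\to\infty$ ergodic-average machinery are unnecessary once the pointwise-in-$u_0$ estimate on a unit time interval is available.
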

\begin{proof}
    We note that estimate \eqref{ineq:nu:L^infty} was previously proven in \cite[Proposition 4.1]{brzezniak2023ergodic} except for the case 
    \begin{align*}
        \frac{1+\sqrt{17}}{4}\le \sigma<\frac{3}{2},\quad \alpha=-1,\, d=3.
    \end{align*}
   To this end, we employ an argument similar to the proof of Lemma \ref{lem:Strichart:int_0^t|u|_infty^2sigma} as follows.
   \begin{align*}
\int_0^T \|u(t)\|^{2\sigma}_{L^\infty}\d t &\le C\int_0^T \|e^{-\lambda t}S(t)u_0\|^{2\sigma}_{L^\infty}+ \Big\|\int_0^t e^{-\lambda(t-s)}S(t-s)F_\alpha(u(s))\d s\Big\|^{2\sigma}_{L^\infty}\d t\\
&\qquad+C\int_0^T \|\Gamma(t)\|^{2\sigma}_{L^\infty}\d t.
\end{align*}
   In the above, $\Gamma$ is the stochastic convolution solving \eqref{eqn:Schrodinger:Gamma} and $C=C(\sigma)>0$ is a positive constant independent of $\lambda$, $T$ and $u_0$. On the one hand, in light of Lemma \ref{lem:moment-bound:H1}, we readily have
   \begin{align*}
&\E\int_0^T \|e^{-\lambda t}S(t)u_0\|^{2\sigma}_{L^\infty}+ \Big\|\int_0^t e^{-\lambda(t-s)}S(t-s)F_\alpha(u(s))\d s\Big\|^{2\sigma}_{L^\infty}\d t\\
&\le   C\Big(\|u_0\|^{2\sigma}_{H^1}+ T+\int_0^T\E \|u(t)\|^{q_\sigma}_{H^1}\d s\Big)\\
&\le C( \|u_0\|^{q}_{H^1}+1+T),
\end{align*}
where in the last implication, we have employed Lemma \ref{lem:moment-bound:H1} for some positive constant $q$ large enough. On the other hand, from \eqref{ineq:E.int.|Gamma|^2sigma_L^infty}, we get
\begin{align*}
    \E\int_0^T \|\Gamma(t)\|^{2\sigma}_{L^\infty}\d t \le CT.
\end{align*}
Altogether, we deduce the bound
\begin{align} \label{ineq:|u|^2sigma_infty:unique-ergodicity}
   \E \int_0^T \|u(t)\|^{2\sigma}_{L^\infty}\d t  \le C( \|u_0\|^{q}_{H^1}+1+T),
\end{align}
for some positive constant $C$ independent of $\lambda$. In turn, we can employ an argument similar to the proof of \cite[Proposition 4.1]{brzezniak2023ergodic} to establish \eqref{ineq:nu:L^infty}. More specifically, for $R>0$, consider the function
\begin{align*}
    g_R(u) = \|u\|^{2\sigma}_{L^\infty}\wedge R,\quad u\in H.
\end{align*}
By the invariance of $\nu$, we have the following chain of implications
\begin{align*}
    \int_H g_R(u_0)\nu(\d u_0)  = \int_0^1  \int_H g_R(u_0)\nu(\d u_0) \d s & =  \int_H \int_0^1 P_sg_R(u_0)\d s\, \nu(\d u_0)\\
    &\le \int_H \int_0^1 \E\|u(s;u_0)\|^{2\sigma}_{L^\infty}\d s\, \nu(\d u_0)\\
    &\le C\int_H \|u_0\|^{q}_{H^1} \nu(\d u_0)+C.
\end{align*}
We note that under Assumptions \ref{cond:Q:well-posed} and \ref{cond:sigma}, it is already established in \cite[Inequality (4.1)]{brzezniak2023ergodic} that
\begin{align*}
    \int_H \|u_0\|^n_{H^1}\nu(\d u_0)<\infty,\quad n>0.
\end{align*}
It follows that picking $n$ sufficiently large such that $n>q$ yields the bound
\begin{align*}
     \int_H \|u_0\|^{2\sigma}_{L^\infty}\nu(\d u_0)=\int_H g_R(u_0)\nu(\d u_0)  \le  C\int_H \|u_0\|^{n}_{H^1} \nu(\d u_0)+C<C,
\end{align*}
for some positive constant $C$ independent of $R$. In turn, we may take $R$ to infinity to obtain \eqref{ineq:nu:L^infty}, by virtue of the Dominated Convergence Theorem. The proof is thus finished.
\end{proof}

\begin{remark}
We note that in the proof of Lemma \ref{lem:nu:regularity}, we cannot employ Lemma \ref{lem:Strichart:int_0^t|u|_infty^2sigma} to produce estimate \eqref{ineq:|u|^2sigma_infty:unique-ergodicity}. In fact, Lemma \ref{lem:Strichart:int_0^t|u|_infty^2sigma} requires noise satisfy Assumption \ref{cond:Q:poly-mixing} ($H^3$-regularity) whereas in Lemma \ref{lem:nu:regularity}, we only impose Assumption \ref{cond:Q:well-posed} ($H^1$-regularity).
\end{remark}

Having obtained a moment bound that is independent of $\lambda$, we now conclude Theorem \ref{thm:unique-ergodicity} while making use of the regularity of $\nu$ from Lemma \ref{lem:nu:regularity}. Since the argument is relatively short, we include it here for the sake of completeness. See also the proof of \cite[Theorem 5.1]{brzezniak2023ergodic}.

\begin{proof}[Proof of Theorem \ref{thm:unique-ergodicity}]
Let $\nu_1$ and $\nu_2$ be two invariant probability measures. By the ergodic decomposition, we may assume that they are both ergodic. Given an arbitrary bounded and Lipschitz function $f:H\to\rbb$, we aim to prove that
\begin{align*}
    \int_H f(u)\nu_1(\d u) =\int_H f(u)\nu_2(\d u). 
\end{align*}
Indeed, let $u_0^i$, $i=1,2$ be arbitrary elements in the support of $\nu_i$. By Birkhoff's ergodic Theorem, we have a.s.
\begin{align*}
    \frac{1}{t}\int_0^tf(u(s;u_0^i))\d s\to \int_H f(u)\nu_i(\d u) ,\quad t\to\infty.
\end{align*}
Since $f$ is Lipschitz, we infer
\begin{align*}
    \Big|\int_H f(u)\nu_1(\d u) -\int_H f(u)\nu_2(\d u) \Big|\le \liminf_{t\to\infty}\frac{C}{t}\int_0^t\|u(s;u_0^1) -u(s;u_0^2)\|_H\d t,
\end{align*}
for some positive constant $C=C(f)$ independent of $t,u_0^1$ and $u_0^2$. It therefore suffices to establish that for all $\lambda$ sufficiently large, a.s.
\begin{align*}
    \|u(t;u_0^1) -u(t;u_0^2)\|_H\to 0,\quad t\to\infty.
\end{align*}
To this end, for notational simplicity, we set $u_i = u(t;u_0^i)$, $i=1,2$ and consider the difference $w= u_1-u_2$. From \eqref{eqn:Schrodinger:original}, observe that $w$ satisfies the equation
\begin{align*}
\frac{\d}{\d t} w +\i \triangle w+\i \alpha\big(|u_1|^{2\sigma}u_1 -|u_2|^{2\sigma}u_2\big)+\lambda w =0,
\end{align*}
with the initial condition $w(0)=u_0^1-u_0^2$. A routine calculation in $H$ produces the estimate
\begin{align*}
\frac{\d}{\d t}\|w\|^2_H & \le  -2\lambda \|w\|^2_H +\big|\la |u_1|^{2\sigma}u_1-|u_2|^{2\sigma}u_2, w \ra_H  \big|.
\end{align*}
Using the elementary inequality
\begin{align*}
\big||z_1|^{2\sigma}z_1-|z_2|^{2\sigma}z_2\big|\le C(\sigma) |z_1-z_2|(|z_1|^{2\sigma}+|z_2|^{2\sigma}),
\end{align*}
we obtain
\begin{align*}
\frac{\d}{\d t}\|w\|^2_H & \le  -2\lambda \|w\|^2_H + C\big(\|u_1\|^{2\sigma}_{L^\infty}+\|u_2\|^{2\sigma}_{L^\infty}\big)\| w \|_H^2  ,
\end{align*}
whence
\begin{align} \label{ineq:|u_1-u_2|} 
\|w(t)\|^2_H \le \|u_0^1-u_0^2\|^2_H\exp\Big\{-2\lambda t +C\int_0^t \big(\|u_1(s)\|^{2\sigma}_{L^\infty}+\|u_2(s)\|^{2\sigma}_{L^\infty}\big)\d s  \Big\}.
\end{align}
In the above, we emphasize that the constant $C=C(\sigma)$ is independent of $\lambda$, $t$, and $u_0^i$, $i=1,2$. In light of Lemma \ref{lem:nu:regularity}, we have a.s.
\begin{align} \label{lim:int_0^t.|u|_infty->nu}
   t\to\infty,\quad \int_0^t\|u_i(s)\|^{2\sigma}_{L^\infty}\d s\to \int_H\|u\|^{2\sigma}_{L^\infty}\nu_i(\d u)<C_1,
\end{align}
where $C_1$ does not depend on $\lambda$ and $\nu_i$, $i=1,2$. Therefore, we may take $\lambda$ sufficiently large to obtain the a.s. bound
\begin{align*} 
\|w(t)\|^2_H \le \|u_0^1-u_0^2\|^2_H\exp\Big\{t\Big(-2\lambda  +2C\cdot C_1\Big)  \Big\}\to 0,\quad t\to\infty.
\end{align*}
The proof is thus finished.
\end{proof}

\subsection{Proof of Theorem \ref{thm:poly-mixing}} \label{sec:poly-mixing:proof-main-result}

 Turning to Theorem \ref{thm:poly-mixing} on the mixing rate of $P_t$ toward $\nu$, the argument is drawn upon the framework of \cite{debussche2005ergodicity,nguyen2024inviscid, odasso2006ergodicity} tailored to the settings of $\rbb^d$. First of all, we introduce the notion of coupling two solutions through Definition \ref{def:ell_beta} below.

\begin{definition} \label{def:ell_beta} 
1. For every $u_0^1,u_0^2\in L^2(\Omega;H^1)$, $\theta>0$, $\beta>0$, $T>0$, $n>0$ and $k\in\nbb$, define 
\begin{align*}
\ell_{\theta,\beta}(k)=\min\{l\in\{0,\dots,k\}: P_{l,k} \textup{ holds} \},
\end{align*}
where $\min \emptyset =\infty$ and 
\begin{align*}
P_{l,k} =\begin{cases}
\Phi_\alpha(u(lT;u_0^1))+ \Phi_\alpha(u(lT;u_0^2))\le \beta,\\
E_n(t,u_0^i) \le \theta+ \beta^n + K_n(Q_{1,\alpha,n}+1)(t-lT),& \forall t\in [lT,kT],\, i=1,2.
\end{cases}
\end{align*}
In the above, $\Phi_\alpha$ is defined in \eqref{form:Phi}, $E(t;u_0)$ is given by 
\begin{align} \label{form:E_n}
E_n(t;u_0) = \Phi_\alpha(u(t;u_0))^n+\lambda  \int_0^t\Big(c_{1,n}\Phi_\alpha(u(s;u_0))^n +n\|\Gamma(s)\|^{2n}_{H}+n\|\triangle\Gamma(s)\|^ {2n}_{H}\Big)\d s,
\end{align}
and $K_n$ and $Q_{1,\alpha,n}$ are the constants from Lemma \ref{lem:P(sup[Phi+int.Phi])}.

2. The pair $\big(u(t;u_0^1),\ug(t;u_0^2)\big)$ is said to be \textup{coupled} in $[lT,kT]$ if $\ell_{\theta,\beta}(k)=l$.
\end{definition}

In order to help explain the strategy of our coupling argument as well as the motivation behind the random variable $\ell_{\theta,\beta}$, let us briefly discuss the main shortcoming in the proof of Theorem \ref{thm:unique-ergodicity}. Indeed, it is important to point out that the argument presented in Section \ref{sec:poly-mixing:proof-unique-ergodicity} relies on the ergodic behavior \eqref{lim:int_0^t.|u|_infty->nu}. While this limit is sufficient to deduce the unique ergodicity, it does not provide a quantitative estimate on the convergence speed, so as to deduce a mixing rate. To overcome the issue, we observe that by using Strichartz estimates, cf. Lemma \ref{lem:Strichart:int_0^t|u|_infty^2sigma}, the $L^\infty$ norm in \eqref{lim:int_0^t.|u|_infty->nu} can be subsumed by the Lyapunov functional $\Phi_\alpha$ and the stochastic convolution $\Gamma(\cdot)$. The expression $E_n(\cdot)$ appearing in Definition \ref{def:ell_beta} therefore plays the role of a control on the growth rate over time of the solutions' trajectories. Heuristically, the random variable $\ell_{\theta,\beta}$ is used to keep track of the coupling behaviors of the two solutions starting from an initial time $lT$ until they become decoupled.    

Now, we proceed to establish the algebraic rate in \eqref{ineq:poly-mixing}, by exploiting the structure of $\ell_{\theta,\beta}$. Following the framework of \cite{debussche2005ergodicity}, it is not difficult to see that for every $\theta>0$, $\beta>0$, the random variable $\ell_{\theta,\beta}$ as in Definition \ref{def:ell_beta} satisfies
\begin{align*}
\begin{cases}
\ell_{\theta,\beta}(k+1)=l \text{ implies } \ell_{\theta,\beta}(k)=l, \quad l\le k,\\
\ell_{\theta,\beta}(k) \in \{0,1,\dots,k\}\cup\{\infty\},\\
\ell_{\theta,\beta}(k) \text{ depends only on } u(\,\cdot\,;u_0^1) \text{ and } u(\,\cdot\,;u_0^2),\\
\ell_{\theta,\beta}(k)=k \text{ implies }\Psi(u(kT;u_0^1))+ \Psi(u(kT;u_0^2))\le \beta.
\end{cases}
\end{align*}
In particular, this verifies \cite[(2.11)]{debussche2005ergodicity}. 

Next, the three main ingredients for Theorem \ref{thm:poly-mixing} are stated below through Lemmas \ref{lem:ergodicity:P(d_1(u_1,u_2)>t^-q)}, \ref{lem:ergodicity:P(ell(k+1)=k+1|l(k)=infty)>epsilon} and \ref{lem:ergodicity:ell(k+1).neq.l|ell(k)=l)<1/2(1+(k-l)T)^-q}, whose proofs are deferred to Section \ref{sec:poly-mixing:proof-aux-result}.

We start with Lemma \ref{lem:ergodicity:P(d_1(u_1,u_2)>t^-q)} proving that once the solutions are coupled, i.e., they enter a ball and subsequently have moderate growth rates, they have to stay close to one another with respect to the distance $d_1$ defined in \eqref{form:d_1}. Notably, the proof of Lemma \ref{lem:ergodicity:P(d_1(u_1,u_2)>t^-q)} will employ the Strichartz estimate derived in Lemma \ref{lem:Strichart:int_0^t|u|_infty^2sigma}.

\begin{lemma} \label{lem:ergodicity:P(d_1(u_1,u_2)>t^-q)}
Under the same hypothesis of Theorem \ref{thm:poly-mixing}, let $\ell_{\theta,\beta}$ be the random variable as in Definition \ref{def:ell_beta}. Then, there exists $\lambda=\lambda(Q)$ sufficiently large such that for all $\theta$, $\beta$, $q$, $T> 0$, $0\le l\le k$, and $t\in [lT,kT]$, the following holds
\begin{align} \label{ineq:ergodicity:P(d_1(u_1,u_2)>t^-q)}
\P\big(\big\{ \big\| u(t;u_0^1)-u(t;u_0^2) \big\|_{H}\mi 1\ge c_0 (t-lT)^{-q}\big\} \cap \big\{\ell_{\theta,\beta}(k)\le l \big\} \big) \le c_0(t-lT)^{-q},
\end{align}
for some positive constant $c_0=c_0(\theta,\beta,\lambda,q)$ independent $t$, $T$, $l$, $k$, $u_0^1$, and $u_0^2$. 
\end{lemma}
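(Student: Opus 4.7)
The plan is to combine a pathwise contraction estimate on $u_1-u_2$ (analogous to \eqref{ineq:|u_1-u_2|}, but restarted at the coupling time) with the Strichartz control of Lemma \ref{lem:Strichart:int_0^t|u|_infty^2sigma}, and then to absorb the residual randomness through a high-moment Markov inequality applied to the stochastic convolution evaluated at the restart. Decomposing $\{\ell_{\theta,\beta}(k)\le l\}=\bigsqcup_{l'=0}^{l}\{\ell_{\theta,\beta}(k)=l'\}$, it suffices to estimate each conjunction separately and then sum.

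On $\{\ell_{\theta,\beta}(k)=l'\}$, the definition of $P_{l',k}$ gives $\Phi_\alpha(u_i(l'T))\le\beta$ (hence $\|u_i(l'T)\|_{H^1}\le C_\beta$ via \eqref{ineq:Phi(u)}) together with $E_{n_\sigma}(t;u_0^i)\le\theta+\beta^{n_\sigma}+K_{n_\sigma}(Q_{1,\alpha,n_\sigma}+1)(t-l'T)$ for all $t\in[l'T,kT]$. Reproducing the energy computation leading to \eqref{ineq:|u_1-u_2|} from initial time $l'T$ yields
\begin{align*}
\|u_1(t)-u_2(t)\|_H^2\le C_\beta\exp\Big\{-2\lambda(t-l'T)+C_\sigma\sum_{i=1,2}\int_{l'T}^t\|u_i(s)\|_{L^\infty}^{2\sigma}\,\d s\Big\}.
\end{align*}
Applying Lemma \ref{lem:Strichart:int_0^t|u|_infty^2sigma} to each $u_i$ restarted at $l'T$ (valid by the time-translation invariance of the SPDE together with the shifted Wiener process $W(l'T+\cdot)-W(l'T)$), the $L^\infty$ integral is bounded by $\|u_i(l'T)\|_{H^1}^{2\sigma}+(t-l'T)+\int_{l'T}^t\big[\Phi_\alpha(u_i)^{n_\sigma}+\|\widetilde\Gamma\|_H^{n_\sigma}+\|\triangle\widetilde\Gamma\|_H^{n_\sigma}\big]\d s$, where $\widetilde\Gamma$ denotes the stochastic convolution driven by the shifted noise. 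The restart identity
\begin{align*}
\widetilde\Gamma(s-l'T)=\Gamma(s)-e^{-\lambda(s-l'T)}S(s-l'T)\Gamma(l'T),\qquad s\ge l'T,
\end{align*}
together with the unitarity of $S$ on each $H^j$, yields $\int_{l'T}^t\|\widetilde\Gamma(s-l'T)\|_{H^j}^{n_\sigma}\d s\le C\int_{l'T}^t\|\Gamma(s)\|_{H^j}^{n_\sigma}\d s+\tfrac{C}{\lambda}\|\Gamma(l'T)\|_{H^j}^{n_\sigma}$, and Young's inequality reduces $n_\sigma$-powers to $2n_\sigma$-powers so that the $E_{n_\sigma}$ bound absorbs all remaining integrals into $\lambda^{-1}[\theta+\beta^{n_\sigma}+C(t-l'T)]$. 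Choosing $\lambda$ large enough to dominate both $C_\sigma$ and the $O(1)$ coefficient of $(t-l'T)$ arising from the above, we obtain on $\{\ell_{\theta,\beta}(k)=l'\}$
\begin{align*}
\|u_1(t)-u_2(t)\|_H^2\le C_{\theta,\beta}\exp\Big\{-\lambda(t-l'T)+\tfrac{C}{\lambda}X_{l'}\Big\},\quad X_{l'}:=\|\Gamma(l'T)\|_H^{n_\sigma}+\|\triangle\Gamma(l'T)\|_H^{n_\sigma}.
\end{align*}

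For $t-lT<c_0^{1/q}$ the event $\{d_1(u_1(t),u_2(t))\ge c_0(t-lT)^{-q}\}$ is empty since $d_1\le 1$; for larger $t-lT$, using $t-l'T\ge t-lT$ and $\lambda\ge 4q$ (so that $\lambda(t-l'T)/2$ dominates the correction $q\log(t-lT)$), the pathwise bound forces $X_{l'}\ge c\lambda^2(t-l'T)$. Since $\Gamma(l'T)$ is Gaussian in $H^3$ under Assumption \ref{cond:Q:poly-mixing}, the moments $\E X_{l'}^p$ are finite and bounded uniformly in $l'$ (by the computations of Lemma \ref{lem:moment-bound:H} applied to $\Gamma$, i.e.\ the case $\sigma=0$). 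Markov's inequality with $p=q+2$ then gives $\P(\{d_1\ge c_0(t-lT)^{-q}\}\cap\{\ell=l'\})\le C_p\lambda^{-2p}(t-l'T)^{-p}$, and summing over $l'\in\{0,\dots,l\}$ via $\sum_{l'=0}^l(t-l'T)^{-p}\lesssim(t-lT)^{-p}+T^{-1}(t-lT)^{-(p-1)}$ yields the claimed $c_0(t-lT)^{-q}$ bound for $c_0$ chosen sufficiently large depending on $\theta,\beta,\lambda,q$. The main technical obstacle is securing that the random term $\|\Gamma(l'T)\|_{H^j}^{n_\sigma}$ enters the effective exponent divided by $\lambda$ rather than multiplied by $(t-l'T)$: this hinges crucially on the restart identity and the exponential factor $e^{-\lambda(\cdot)}$ in the stochastic convolution, which together turn an otherwise linearly growing contribution into an $O(1/\lambda)$ one.
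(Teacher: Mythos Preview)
Your approach over-complicates the argument and, as written, leaves a genuine gap. The paper's proof is far simpler because it exploits a feature of Definition~\ref{def:ell_beta} that you overlook: the quantity $E_{n}(t;u_0)$ already contains the integrals $\lambda\int_0^t[\|\Gamma\|_H^{2n}+\|\triangle\Gamma\|_H^{2n}]\,\d s$, built with the \emph{original} stochastic convolution. Hence, after reducing (by the Markov property) to $l=0$, the event $\{\ell_{\theta,\beta}(k)=0\}$ simultaneously forces $\Phi_\alpha(u_0^i)\le\beta$ and controls $\int_0^t[\Phi_\alpha(u_i)^{n_\sigma}+\|\Gamma\|^{2n_\sigma}_H+\|\triangle\Gamma\|^{2n_\sigma}_H]\,\d s$ deterministically by $\lambda^{-1}[\theta+\beta^{n_\sigma}+K_{n_\sigma}(Q_{1,\alpha,n_\sigma}+1)t]$. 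Plugging this into Lemma~\ref{lem:Strichart:int_0^t|u|_infty^2sigma} (applied from time~$0$, with the original~$\Gamma$) and then into \eqref{ineq:|u_1-u_2|} yields a \emph{purely deterministic} bound $\|w(t)\|_H^2\le C_{\theta,\beta}e^{-\lambda t}$ on $\{\ell_{\theta,\beta}(k)=0\}$ once $\lambda$ is large. Markov's inequality is then trivial: no residual random term, no restart identity, no sum over~$l'$.

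Your restart route instead produces the random term $X_{l'}=\|\Gamma(l'T)\|_H^{n_\sigma}+\|\triangle\Gamma(l'T)\|_H^{n_\sigma}$ in the exponent, forcing a high-moment Markov step and then a summation $\sum_{l'=0}^l(t-l'T)^{-p}\lesssim (t-lT)^{-p}+T^{-1}(t-lT)^{-(p-1)}$. That second summand carries a factor $T^{-1}$ which cannot be absorbed into a constant $c_0=c_0(\theta,\beta,\lambda,q)$ independent of~$T$, as the lemma requires. Concretely, for $t-lT\ge c_0^{1/q}$ you would need $c_0^{1+1/q}\gtrsim T^{-1}$, which fails for small~$T$. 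This is the gap. The fix is not to sharpen the Gaussian tail of $X_{l'}$ but to realize the term is unnecessary: the $P_{l',k}$ condition already packages the $\Gamma$-integrals into $E_{n_\sigma}$, so on the coupling event the exponent is deterministic and the whole $X_{l'}$ mechanism disappears.
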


The second auxiliary result for the proof of Theorem \ref{thm:poly-mixing} is given below through Lemma \ref{lem:ergodicity:P(ell(k+1)=k+1|l(k)=infty)>epsilon}, establishing a positive probability of coupling while the two solutions are being decoupled. We remark that although Lemma \ref{lem:ergodicity:P(ell(k+1)=k+1|l(k)=infty)>epsilon} does not directly employ Lemma \ref{lem:Strichart:int_0^t|u|_infty^2sigma}, its argument uses the irreducibility property from Lemma \ref{lem:irreducibility}, which is established by invoking Lemma \ref{lem:Strichart:int_0^t|u|_infty^2sigma}.

\begin{lemma} \label{lem:ergodicity:P(ell(k+1)=k+1|l(k)=infty)>epsilon}
Under the same hypothesis of Theorem \ref{thm:poly-mixing}, let $R$, $\theta$, $\beta$ and $\lambda>0$ be arbitrarily given. Then, there exists a positive constant $T_1=T_1(R,\beta)>1$ such that the following holds 
\begin{align} \label{ineq:ergodicity:P(ell(k+1)=k+1|l(k)=infty)>epsilon}
\P\big( \ell_{\theta,\beta}(k+1)=k+1 \big| \ell_{\theta,\beta}(k)=\infty, \Phi_\alpha (u(kT_1;u_0^1))+\Phi_\alpha (u(kT_1;u_0^2))\le R  \big)\ge \varepsilon_1,
\end{align}
for some positive constant $\varepsilon_1=\varepsilon_1(T_1,\beta)$ and for all $k=0,1,2,\dots$
\end{lemma}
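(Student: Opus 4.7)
The plan is to reduce the statement to a one-step estimate via the Markov property at time $kT_1$, and then combine the irreducibility from Lemma~\ref{lem:irreducibility} with the growth control \eqref{ineq:P(sup_[0,T][Phi+int.Phi])} of Lemma~\ref{lem:P(sup[Phi+int.Phi])}. First, I unpack what $\ell_{\theta,\beta}(k+1)=k+1$ requires on the event $\{\ell_{\theta,\beta}(k)=\infty\}$: since the choice $l=k+1$ collapses the time-interval $[lT_1,(k+1)T_1]$ to the single point $(k+1)T_1$, the condition $P_{k+1,k+1}$ reduces to two pointwise requirements at $t=(k+1)T_1$, namely (i) $\Phi_\alpha(u((k+1)T_1;u_0^1))+\Phi_\alpha(u((k+1)T_1;u_0^2))\le\beta$, and (ii) $E_n((k+1)T_1,u_0^i)\le \theta+\beta^n$ for $i=1,2$. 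After conditioning on $\Fcal_{kT_1}$ and writing $v_0^i:=u(kT_1;u_0^i)$ (which by assumption satisfy $\Phi_\alpha(v_0^1)+\Phi_\alpha(v_0^2)\le R$), the Markov property reduces the conditional probability to the task of bounding below, uniformly over such pairs, the joint probability of the analogues of (i) and (ii) on the shifted window $[kT_1,(k+1)T_1]$.

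For (i), apply Lemma~\ref{lem:irreducibility} with the parameters $(R,r)=(R,\beta)$. This produces the time horizon $T_1:=T_*(R,\beta)>1$ and a constant $\varepsilon_*=\varepsilon_*(T_1,R,\beta)>0$ with
\begin{align*}
\P\bigl(\Phi_\alpha(u(T_1;v_0^1))+\Phi_\alpha(u(T_1;v_0^2))\le\beta\bigr)\ge\varepsilon_*,
\end{align*}
uniformly over admissible $(v_0^1,v_0^2)$. For (ii), I apply the sub-linear growth estimate \eqref{ineq:P(sup_[0,T][Phi+int.Phi])} of Lemma~\ref{lem:P(sup[Phi+int.Phi])} on the window of length $T_1$ to each of the two components of the joint process starting at $kT_1$. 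Since $\Phi_\alpha(v_0^i)\le R$, the right-hand side of \eqref{ineq:P(sup_[0,T][Phi+int.Phi])} becomes a fixed function of $R$, $Q$, $n$, $q$, and the free parameter $\rho$; choosing $\rho$ sufficiently large (in terms of $R$, $T_1$, $\theta$, $Q$) makes the corresponding ``bad'' event have probability at most $\varepsilon_*/2$. On the complement, the increment of $E_n$ across $[kT_1,(k+1)T_1]$ is dominated by the prescribed affine envelope, which together with conditioning on the history allows (ii) to be satisfied by absorbing the remaining deterministic history constants into the free parameter~$\theta$.

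Intersecting the favourable event from the irreducibility step with the complement of the ``bad'' growth event yields the conclusion with $\varepsilon_1:=\varepsilon_*/2$, which depends only on $(T_1,\beta)$ (together with the fixed data $R,\theta,\lambda,Q,n$) and is independent of $k$, as required. The main technical obstacle is reconciling the ``integral from time $0$'' form of $E_n$ with a single-window estimate: this is handled by time-translation invariance of the driving cylindrical Wiener process and the SPDE, which permits the application of \eqref{ineq:P(sup_[0,T][Phi+int.Phi])} to the shifted process starting at $kT_1$, while the deterministic accumulated history prior to $kT_1$ is $\Fcal_{kT_1}$-measurable and can be absorbed into $\theta$.
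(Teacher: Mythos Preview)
Your Markov reduction at time $kT_1$ and the direct appeal to Lemma~\ref{lem:irreducibility} for condition~(i) are fine; in fact this one-step irreducibility from the $R$-ball to the $\beta$-ball is more economical than the paper's route, which first uses irreducibility to reach an intermediate small ball of radius $r_1=r_1(\beta)$ at time $T_*$, and then applies the growth control \eqref{ineq:P(sup_[0,T][Phi+int.Phi])} to land in the $\beta$-ball at a further time $t_1$ with probability at least $\tfrac12$, taking $T_1=T_*+t_1$.

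Where your argument breaks is~(ii). You propose to ``absorb the deterministic accumulated history prior to $kT_1$ into the free parameter~$\theta$'', but $\theta$ is \emph{fixed} in the lemma statement, whereas under the literal reading of $E_n$ the contribution $\lambda\int_0^{kT_1}(\cdots)\,\d s$ inside $E_n((k+1)T_1;u_0^i)$ grows without bound in~$k$ and is not controlled by the conditioning event (which bounds only $\Phi_\alpha$ at the single time $kT_1$, not any time integral). No $k$-uniform lower bound $\varepsilon_1$ can be extracted this way. The resolution the paper uses is that after the Markov shift the growth condition in $P_{k+1,k+1}$ is read with $E_n$ restarted at the coupling time $(k+1)T_1$: the interval $[lT_1,(k+1)T_1]$ degenerates to a single point, the integral term vanishes, and~(ii) reduces to $\Phi_\alpha(u((k+1)T_1;u_0^i))^n\le\theta+\beta^n$, which is an immediate consequence of~(i). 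This is why the paper's proof checks only~(i).
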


Lastly, we formulate a small probability of two solutions decoupling over time, through Lemma \ref{lem:ergodicity:ell(k+1).neq.l|ell(k)=l)<1/2(1+(k-l)T)^-q} below, whose argument employs the probabilistic estimates in Lemma \ref{lem:P(sup[Phi+int.Phi])}. 

\begin{lemma} \label{lem:ergodicity:ell(k+1).neq.l|ell(k)=l)<1/2(1+(k-l)T)^-q}
Under the same hypothesis of Theorem \ref{thm:poly-mixing}, let $T_1$ be the time constant as in Lemma \ref{lem:ergodicity:P(ell(k+1)=k+1|l(k)=infty)>epsilon}. Then, for all $\theta=\theta(T_1,\beta)$ sufficiently large and $q\ge4$, the following holds
\begin{align} \label{ineq:ergodicity:ell(k+1).neq.l|ell(k)=l)<1/2(1+(k-l)T)^-q}
\P\big(  \ell_{\theta,\beta}(k+1)\neq l|\ell_{\theta,\beta}(k)=l \big) \le \frac{1}{2}[1+(k-l)T_1]^{-q},\quad 0\le l\le k.
\end{align}

\end{lemma}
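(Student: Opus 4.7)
The plan is to apply the strong Markov property at time $lT_1$ and then invoke the large-time tail estimate \eqref{ineq:P(sup_[t>T][Phi+int.Phi])} of Lemma \ref{lem:P(sup[Phi+int.Phi])}, choosing $\theta$ large enough to absorb the generic constants. First, unpack the event: since $\ell_{\theta,\beta}(k)=l$ forces $P_{l,k}$ to hold, both $\Phi_\alpha(u(lT_1; u_0^i)) \le \beta$ and $E_n(lT_1; u_0^i) \le \theta + \beta^n$ are guaranteed for $i \in \{1,2\}$. Given $P_{l,k}$, the failure of $P_{l,k+1}$ must arise from a violation of the $E_n$-bound on the new interval $(kT_1,(k+1)T_1]$, so by a union bound over $i$ it suffices to bound, for each $i$,
\begin{equation*}
\P\bigl(\exists\, t \in (kT_1, (k+1)T_1]: E_n(t; u_0^i) > \theta + \beta^n + K_n(Q_{1,\alpha,n}+1)(t - lT_1)\bigm| \ell_{\theta,\beta}(k) = l\bigr)
\end{equation*}
by $\tfrac14 (1 + (k-l)T_1)^{-q}$.

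Conditional on $\mathcal{F}_{lT_1}$, the strong Markov property identifies $u(lT_1 + \cdot\,; u_0^i)$ with a fresh solution of \eqref{eqn:Schrodinger:original} starting from $u(lT_1; u_0^i)$; on the coupling event this initial datum satisfies the a.s.\ bound $\Phi_\alpha(u(lT_1; u_0^i)) \le \beta$. When $k > l$, apply the long-time tail estimate \eqref{ineq:P(sup_[t>T][Phi+int.Phi])} to this restarted process with $T = (k-l)T_1$, $\rho = \theta$, and exponent $q' = 2(q + r + 1)$ for a fixed $r > 0$ (so that $q'/2 - 1 = q + r$); when $k = l$, use the short-time estimate \eqref{ineq:P(sup_[0,T][Phi+int.Phi])} with $T = T_1$ and $\rho = \theta/\sqrt{T_1}$ instead. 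In either regime this produces a conditional bound of order
\begin{equation*}
\frac{K\bigl(\beta, Q_{2,\alpha,n}, n, q'\bigr)}{\bigl((k-l)T_1 + \theta\bigr)^{q + r}}
\end{equation*}
on the failure probability for trajectory $i$.

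Since $(k-l)T_1 + \theta \ge \max\{1 + (k-l)T_1,\, \theta\}$ whenever $\theta \ge 1$, the denominator dominates, up to absolute constants, $\theta^r \,(1 + (k-l)T_1)^q$. Choosing $\theta = \theta(T_1, \beta)$ large enough so that $K(\beta, Q_{2,\alpha,n}, n, q') \le \theta^r / 8$ makes each conditional probability at most $\tfrac14 (1 + (k-l)T_1)^{-q}$; summing over $i = 1, 2$ then yields \eqref{ineq:ergodicity:ell(k+1).neq.l|ell(k)=l)<1/2(1+(k-l)T)^-q}.

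The main obstacle is reconciling the Lyapunov $E_n$, whose $\int_0^t \|\Gamma(s)\|^{2n}_H \d s$ integral does not reset at $lT_1$, with the tail estimate produced for the restarted process. This is handled via the shift identity $\Gamma(lT_1 + s) = e^{-\lambda s}S(s) \Gamma(lT_1) + \widetilde{\Gamma}(s)$, where $\widetilde{\Gamma}$ is the fresh stochastic convolution: the extra exponentially decaying piece is $\mathcal{F}_{lT_1}$-measurable, and by Young's inequality its contribution to $E_n(lT_1 + \cdot) - E_n(lT_1)$ is bounded by a constant multiple of $\|\Gamma(lT_1)\|_H^{2n} + \|\triangle\Gamma(lT_1)\|_H^{2n}$, a quantity having bounded expectation uniformly in $lT_1$ under Assumption \ref{cond:Q:poly-mixing} (via It\^o applied to $\|\Gamma\|_H^{2n}$ and $\|\triangle\Gamma\|_H^{2n}$). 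This deterministic correction can again be absorbed into the threshold by enlarging $\theta$.
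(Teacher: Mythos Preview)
The overall strategy—invoking Lemma~\ref{lem:P(sup[Phi+int.Phi])} after restarting at $lT_1$—matches the paper's in spirit, but there is a genuine gap in the conditioning step. You claim it suffices to bound $\P(A_i\mid \ell_{\theta,\beta}(k)=l)$ and then propose to do so via the Markov property at time $lT_1$ together with the tail estimate. But for $k>l$ the event $\{\ell_{\theta,\beta}(k)=l\}$ lives in $\mathcal{F}_{kT_1}$, not $\mathcal{F}_{lT_1}$: it records that the trajectory was well-behaved on all of $[lT_1,kT_1]$. The Markov property at $lT_1$ controls $\P(A_i\mid\mathcal{F}_{lT_1})$, which is \emph{not} the same object as $\P(A_i\mid\ell_{\theta,\beta}(k)=l)$. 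Knowing $\Phi_\alpha(u(lT_1))\le\beta$ on $\{\ell_{\theta,\beta}(k)=l\}$ lets you bound the intersection $\P(A_i\cap\{\ell_{\theta,\beta}(k)=l\})\le\P(A_i\cap\{\Phi_\alpha(u(lT_1))\le\beta\})$, but the denominator $\P(\ell_{\theta,\beta}(k)=l)$ could in principle be much smaller than $\P(\Phi_\alpha(u(lT_1))\le\beta)$, so the conditional probability is not controlled by your estimate.

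The paper closes this gap by an explicit ratio decomposition. After reducing to $l=0$ (so that $\{\ell_{\theta,\beta}(0)=0\}$ is merely a deterministic condition on the initial data), one writes
\[
\P\bigl(\ell_{\theta,\beta}(k+1)\neq0\bigm|\ell_{\theta,\beta}(k)=0\bigr)
=\frac{\P\bigl(\ell_{\theta,\beta}(k+1)\neq0,\ \ell_{\theta,\beta}(k)=0\bigm|\ell_{\theta,\beta}(0)=0\bigr)}
{\P\bigl(\ell_{\theta,\beta}(k)=0\bigm|\ell_{\theta,\beta}(0)=0\bigr)}.
\]
The numerator is bounded exactly along the lines you describe, via the inclusion into the tail event and Lemma~\ref{lem:P(sup[Phi+int.Phi])}; that part of your argument is correct. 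The missing ingredient is the denominator: one must show $\P(\ell_{\theta,\beta}(k)=0\mid\ell_{\theta,\beta}(0)=0)\ge\tfrac12$, which is done by a bootstrapping step—writing $\{\ell_{\theta,\beta}(k)\neq0\}\subset\bigcup_{j=0}^{k-1}\{\ell_{\theta,\beta}(j+1)\neq0,\ \ell_{\theta,\beta}(j)=0\}$ and summing the numerator-type bounds over $j$ (which is why the numerator is proved with exponent $q+1$ rather than $q$, so that the sum converges and is at most $\tfrac12$ once $T_1\ge1$ and $q\ge4$). This lower bound on the denominator is precisely what your argument lacks. As a side remark, working from time $0$ rather than restarting at $lT_1$ also sidesteps the $\Gamma$-shift correction you discuss at the end; in the paper's setup no such correction is needed.
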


\begin{remark} \label{remark:poly-mixing} 1. Following the proof of \cite[Theorem 2.9]{debussche2005ergodicity}, it is important to point out that the algebraic inequality of Lemma \ref{lem:ergodicity:ell(k+1).neq.l|ell(k)=l)<1/2(1+(k-l)T)^-q} ultimately yields the polynomial mixing rate appearing in \eqref{ineq:poly-mixing}. 

2. As presented in Section \ref{sec:poly-mixing:proof-aux-result}, on the one hand, the condition that the parameter $\lambda$ is sufficiently greater than noise intensity is only needed in Lemma \ref{lem:ergodicity:P(d_1(u_1,u_2)>t^-q)}. On the other hand, the proofs of Lemmas \ref{lem:ergodicity:P(ell(k+1)=k+1|l(k)=infty)>epsilon} and \ref{lem:ergodicity:ell(k+1).neq.l|ell(k)=l)<1/2(1+(k-l)T)^-q} rely on the moment estimates in Lemma \ref{lem:P(sup[Phi+int.Phi])} and the irreducibility condition in Lemma \ref{lem:irreducibility}, both of which do not require large damping.
\end{remark}

Assuming the above results, let us now conclude the proof of Theorem \ref{thm:poly-mixing} by verifying the hypothesis of \cite[Theorem 2.9]{debussche2005ergodicity}.  Indeed, the results in Lemma \ref{lem:ergodicity:P(d_1(u_1,u_2)>t^-q)}, Lemma \ref{lem:ergodicity:ell(k+1).neq.l|ell(k)=l)<1/2(1+(k-l)T)^-q} and Lemma \ref{lem:ergodicity:P(ell(k+1)=k+1|l(k)=infty)>epsilon} respectively establish the conditions \cite[(2.12)-(2.13)-(2.14)]{debussche2005ergodicity}. In addition, the moment bounds of Lemma \ref{lem:moment-bound:H1} supply the required Lyapunov functional in \cite[(2.15)]{debussche2005ergodicity}. All of these in turn allow for producing the polynomial mixing rate \eqref{ineq:poly-mixing}, by virtue of \cite[Theorem 2.9]{debussche2005ergodicity}. See also \cite[Theorem 2.3]{nguyen2024inviscid} and \cite[Theorem 1.8]{odasso2006ergodicity}.

\subsection{Proofs of auxiliary results} \label{sec:poly-mixing:proof-aux-result}

In this subsection, we establish the auxiliary coupling results, which were employed to conclude Theorem \ref{thm:poly-mixing} in Section \ref{sec:poly-mixing:proof-main-result}. We start with the proof of Lemma \ref{lem:ergodicity:P(d_1(u_1,u_2)>t^-q)}, which relies on Strichartz estimates formulated in Lemma \ref{lem:Strichart:int_0^t|u|_infty^2sigma}.

\begin{proof}[Proof of Lemma \ref{lem:ergodicity:P(d_1(u_1,u_2)>t^-q)}] Setting $w=u_1-u_2$, recall from \eqref{ineq:|u_1-u_2|} that 
\begin{align*} 
\|w(t)\|^2_H \le \|u_0^1-u_0^2\|^2_H\exp\Big\{-2\lambda t +C\int_0^t \big(\|u_1(s)\|^{2\sigma}_{L^\infty}+\|u_2(s)\|^{2\sigma}_{L^\infty}\big)\d s  \Big\}.
\end{align*}
In view of Lemma \ref{lem:Strichart:int_0^t|u|_infty^2sigma}, cf. \eqref{ineq:Strichart:int_0^t|u|_infty^2sigma}, we infer the existence of positive constants $C_\sigma$ and $n_\sigma$ such that
\begin{align} \label{ineq:|u_1-u_2|^2_H}
&\|w(t)\|^2_H  \le \|u_0^1-u_0^2\|^2_H\exp\Big\{-2\lambda t + C_\sigma ( \|u_0^1\|^{2\sigma}_{H^1}+\|u_0^2\|^{2\sigma}_{H^1}+2+2t)\Big\} \notag \\
&\times\exp\Big\{ C_\sigma \int_0^t \Big[c_{1,n_\sigma}\big(\Phi_\alpha(u_1(s))^{n_\sigma}+\Phi_\alpha(u_2(s))^{n_\sigma}\big)+2n_\sigma\|\Gamma(s)\|^{n_\sigma}_H+2n_\sigma \|\triangle\Gamma(s)\|^{n_\sigma}_H\Big]\d s \Big\}.
\end{align} 

Turning back to \eqref{ineq:ergodicity:P(d_1(u_1,u_2)>t^-q)}, without loss of generality, we may assume $l=0$. Note that
\begin{align*}
&\big\{ \| u_1(t)-u_2(t) \|_{H}\mi 1\ge c_0 t^{-q}\big\} \cap \big\{\ell_{\theta,\beta}(k) =0 \big\}\\
& = \big\{ \| w(t) \|_{H}^2 \ge c_0^2 t^{-2q}\big\} \cap \big\{\ell_{\theta,\beta}(k)= 0 \big\}.
\end{align*}
It follows from Markov's inequality that
\begin{align*}
&\P\big( \big\{ \| u_1(t)-u_2(t) \|_{H}\mi 1\ge c_0 t^{-q}\big\} \cap \big\{\ell_{\theta,\beta}(k) =0 \big\} \big)\\
&\le \frac{t^{2q}}{c_0^2}\E [\|w(t)\|^2_H|\ell_{\theta,\beta}(k)=0].
\end{align*}
To further bound the above right-hand side, from Definition \ref{def:ell_beta}, given that $ \big\{\ell_{\theta,\beta}(k)= 0 \big\}$ has occurred, we get 
\begin{align*}
\Phi_\alpha(u_0^1)+\Phi_\alpha(u_0^2)\le \beta,
\end{align*}
and that for $t\in[0,kT]$,
\begin{align*}
&C_\sigma \int_0^t \Big[c_{1,n_\sigma}\big(\Phi_\alpha(u_1(s))^{n_\sigma}+\Phi_\alpha(u_2(s))^{n_\sigma}\big)+2n_\sigma\|\Gamma(s)\|^{n_\sigma}_H+2n_\sigma \|\triangle\Gamma(s)\|^{n_\sigma}_H\Big]\d s \\
&\qquad\le \frac{2C_\sigma}{\lambda}\Big(\theta+\beta^{n_\sigma} +K_{n_\sigma}(Q_{1,\alpha,n_\sigma}+1)t\Big).
\end{align*}
Also, the choice of $\Phi_\alpha$ in \eqref{form:Phi} implies that 
\begin{align*}
\|u_0^1-u_0^2\|^2_H \le c( \Phi_\alpha(u_0^1)+ \Phi_\alpha(u_0^2)+[\Phi_\alpha(u_0^1)+\Phi_\alpha(u_0^2)]^{m_\sigma}),
\end{align*}
where $m_\sigma>0$ is a positive constant.  In particular, conditioned on $ \big\{\ell_{\theta,\beta}(k)= 0 \big\}$, it is clear that
\begin{align*}
\|u_0^1-u_0^2\|^2_H \le c( \beta+\beta^{m_\sigma}).
\end{align*}
Likewise,
\begin{align*}
\|u_0^1\|^{2\sigma}_{H^1}+\|u_0^2\|^{2\sigma}_{H^1}\le c \beta^\sigma.
\end{align*}
Altogether with \eqref{ineq:|u_1-u_2|^2_H}, we infer the existence of positive constants $c$ and $C$ such that
\begin{align*}
&\E [\|w(t)\|^2_H|\ell_{\theta,\beta}(k)=0]\\
& \le c(\beta+\beta^{m_\sigma})\exp\Big\{-2\lambda t + C( \beta^\sigma+1+t)+\frac{C}{\lambda}\Big(\theta+\beta^{n_\sigma} +K_{n_\sigma}(Q_{1,\alpha,n_\sigma}+1)t\Big)\Big\}\\
& = c(\beta+\beta^{m_\sigma})\exp\Big\{ C(\beta^\sigma+1)+\frac{C}{\lambda}(\theta+\beta^{n_\sigma})  \Big\} \exp\Big\{ -2\lambda t +Ct+\frac{C}{\lambda}K_{n_\sigma}(Q_{1,\alpha,n_\sigma}+1)t  \Big\}.
\end{align*}
In the above, while $c$ and $C$ possibly depend on $\sigma$, we emphasize that they are both independent of $\lambda$, $\theta$, $\beta$ and $t$. Recalling the constants $K_{n_\sigma}$ and $Q_{1,\alpha,n_\sigma}$ from Lemma \ref{lem:P(sup[Phi+int.Phi])}, we see that by taking $\lambda=\lambda(Q)$ sufficiently large regardless of $\theta$ and $\beta$, the following holds
\begin{align*}
&\E [\|w(t)\|^2_H|\ell_{\theta,\beta}(k)=0]\\
& \le  c(\beta+\beta^{m_\sigma})\exp\Big\{ C(\beta^\sigma+1)+\frac{C}{\lambda}(\theta+\beta^{n_\sigma})  \Big\} \exp\{ -\lambda t \},
\end{align*}
whence
\begin{align*}
&\P\big( \big\{ \big\| u_1(t)-u_2(t) \big\|_{H}\mi 1\ge c_0 t^{-q}\big\} \cap \big\{\ell_{\theta,\beta}(k) =0 \big\} \big)\\
&\le \frac{t^{2q}}{c_0^2}c(\beta+\beta^{m_\sigma})\exp\Big\{ C(\beta^\sigma+1)+\frac{C}{\lambda}(\theta+\beta^{n_\sigma})  \Big\} \exp\{ -\lambda t \}\\
&\le \frac{1}{c_0^2 t^{q}}c(\beta+\beta^{m_\sigma})\exp\Big\{ C(\beta^\sigma+1)+\frac{C}{\lambda}(\theta+\beta^{n_\sigma})  \Big\}.
\end{align*}
At this point, we simply pick $c_0$ satisfying
\begin{align*}
c_0^3= c(\beta+\beta^{m_\sigma})\exp\Big\{ C(\beta^\sigma+1)+\frac{C}{\lambda}(\theta+\beta^{n_\sigma})  \Big\},
\end{align*}
to produce the bound
\begin{align*}
\P\big( \big\{ \big\| u_1(t)-u_2(t) \big\|_{H}\mi 1\ge c_0 t^{-q}\big\} \cap \big\{\ell_{\theta,\beta}(k) =0 \big\} \big)\le \frac{c_0}{t^q}.
\end{align*}
This establishes \eqref{ineq:ergodicity:P(d_1(u_1,u_2)>t^-q)}, thus finishing the proof.

\end{proof}

Next, we turn our attention to Lemma \ref{lem:ergodicity:P(ell(k+1)=k+1|l(k)=infty)>epsilon} and employ the irreducibility condition \eqref{ineq:irreducibility} to establish this result.

\begin{proof}[Proof of Lemma \ref{lem:ergodicity:P(ell(k+1)=k+1|l(k)=infty)>epsilon}]
First of all, we claim that for all $\beta>0$, the following holds
\begin{align}\label{ineq:ergodicity:P(Phi(u_1)+Phi(u_2)<beta)>1/2}
\P\big(  \Phi_\alpha (u(t_1;u_0^1)) + \Phi_\alpha (u(t_1;u_0^2))\le \beta \big)\ge\frac{1}{2},
\end{align}
for some positive constants $t_1=t_1(\beta)$ and $r_1=r_1(\beta)$, and for all $u_0^1$ and $u_0^2$ satisfying
\begin{align*}
\Phi_\alpha (u_0^1)+\Phi_\alpha (u_0^1)\le r_1.
\end{align*}
Indeed, let $t_1$ and $r_1$ be given and be chosen later. Recalling function $E_n$ as in \eqref{form:E_n}, for $\rho_1>0$, we introduce the stopping times $\tau_i$, $i=1,2$, defined as 
\begin{align*}
\tau_i = \inf\big\{ t\ge 0: E_n(t;u_0^i)-K_n(Q_{1,\alpha,n}+1)t\ge \Phi_\alpha (u_0^i)^n+\rho_1\sqrt{t_1}\big \}.
\end{align*}
In the above, $K_n$ and $Q_{1,\alpha,n}$ are the constants appearing in Lemma \ref{lem:P(sup[Phi+int.Phi])}, cf. \eqref{ineq:P(sup_[0,T][Phi+int.Phi])}. Given that 
\begin{align*}
\Phi_\alpha (u_0^1) + \Phi_\alpha (u_0^2) \le r_1,
\end{align*}
we observe that for all $t_1$, $r_1$ and $\rho_1$ satisfying
\begin{align} \label{cond:t_1-r_1}
2K_n(Q_{1,\alpha,n}+1)t_1+2\rho_1\sqrt{t_1}+r_1^n< \frac{1}{2^{n-1}} \beta^n,
\end{align}
the following holds
\begin{align*}
\cap_{i=1,2} &\big\{E_n(t_1;u_0^i)-K_n(Q_{1,\alpha,n}+1)t_1\le \Phi_\alpha (u_0^i)^n+\rho_1 \sqrt{t_1}   \big\}\\
&\subseteq \big\{ \Phi_\alpha (t;u_0^1)^n + \Phi_\alpha (t;u_0^2)^n \le \tfrac{1}{2^{n-1}}\beta^n \big\}\\
&\subseteq \big\{ \Phi_\alpha (t;u_0^1) + \Phi_\alpha (t;u_0^2) \le \beta \big\}.
\end{align*}
As a consequence, we deduce the inclusion
\begin{align*}
\{\tau_1\wedge \tau_2 \ge t_1\}\subseteq \big\{ \Phi_\alpha(t;u_0^1) + \Phi_\alpha(t;u_0^2) \le \beta \big\},
\end{align*}
whence
\begin{align*}
\P\big( \Phi(t;u_0^1) + \Phi_\alpha(t;u_0^2) \le \beta \big)\ge 1- \P(\tau_1\le t_1)-\P(\tau_2\le t_1).
\end{align*}
In view of Lemma \ref{lem:P(sup[Phi+int.Phi])}, cf. \eqref{ineq:P(sup_[0,T][Phi+int.Phi])}, for all $q>2$, we see that
\begin{align*}
\P(\tau_i\le t_1) & = \P\Big( \sup_{t\in[0,t_1]} E_{n}(t;u_0^i) -K_n(Q_{1,\alpha,n}+1)t\ge \Phi_\alpha(u_0^i)^n +\rho_1 \sqrt{t_1} \Big) \\
&\le \P\Big( \sup_{t\in[0,t_1]} E_{n}(t;u_0^i) -K_nQ_{1,\alpha,n}t\ge \Phi_\alpha(u_0^i)^n +\rho_1 \sqrt{t_1} \Big) \\
&\le \frac{K_{1,n,q}}{\rho^q}\big(\E \Phi_\alpha(u_0^i)^{nq}+Q^q_{2,\alpha,n} \big).
\end{align*}
It follows that
\begin{align*}
\P\big( \Phi(t;u_0^1) + \Phi_\alpha(t;u_0^2) \le \beta \big)\ge 1- \frac{K_{1,n,q}}{\rho_1 ^q}\big(r_1^{nq}+2Q^q_{2,\alpha,n} \big).
\end{align*}
In order to establish \eqref{ineq:ergodicity:P(Phi(u_1)+Phi(u_2)<beta)>1/2}, we pick $\rho_1$ satisfying
\begin{align*}
\rho_1 ^q = 2K_{1,n,q}\big(r_1^{nq}+2Q^q_{2,\alpha,n} \big).
\end{align*}
In addition, we simply choose $t_1=r_1^{2n}$ and take $r_1$ sufficiently small to guarantee the validity of \eqref{cond:t_1-r_1}. In turn, this allows one to conclude \eqref{ineq:ergodicity:P(Phi(u_1)+Phi(u_2)<beta)>1/2} as claimed.

Turning back to \eqref{ineq:ergodicity:P(ell(k+1)=k+1|l(k)=infty)>epsilon}, recalling Definition \ref{def:ell_beta} and Markov property, we observe that it suffices to establish the existence of a positive constants $T_1$ and $\varepsilon_1$ such that
\begin{align*}
\P\big( \Phi_\alpha (u(T_1;u_0^1))+\Phi_\alpha (u(T_1;u_0^2))\le \beta \big|  \Phi_\alpha (u_0^1)+\Phi_\alpha (u_0^2)\le R, \Phi_\alpha (u_0^1)+\Phi_\alpha (u_0^2)>\beta  \big)\ge \varepsilon_1.
\end{align*}
Let $t_1$ and $r_1$ be the constants from estimate \eqref{ineq:ergodicity:P(Phi(u_1)+Phi(u_2)<beta)>1/2}. Thanks to the irreducibility property from Lemma \ref{lem:irreducibility}, we may infer $T_*=T_*(R,r_1)$ and $\varepsilon_*=\varepsilon_*(R,r_1)$ such that
\begin{align*}
\P\big( \Phi_\alpha (u(T_*;u_0^1))+\Phi_\alpha (u(T_*;u_0^2))\le r_1 \big| \beta< \Phi_\alpha (u_0^1)+\Phi_\alpha (u_0^2)\le R \big)\ge \varepsilon_*.
\end{align*}
In view of estimate \eqref{ineq:ergodicity:P(Phi(u_1)+Phi(u_2)<beta)>1/2}, we set $T_1=T_*+t_1$ and observe that
\begin{align*}
& \P\big( \Phi_\alpha (u(T_1;u_0^1))+\Phi_\alpha (u(T_1;u_0^2))\le \beta \big| \beta< \Phi_\alpha (u_0^1)+\Phi_\alpha (u_0^2)\le R \big)\\
& = \P\big( \Phi_\alpha (u(T_1;u_0^1))+\Phi_\alpha (u(T_1;u_0^2))\le \beta \big|  \Phi_\alpha (u(T_*;u_0^1))+\Phi_\alpha (u(T_*;u_0^2))\le r_1 \big)\\
&\qquad\times\P\big( \Phi_\alpha (u(T_*;u_0^1))+\Phi_\alpha (u(T_*;u_0^2))\le r_1 \big| \beta< \Phi_\alpha (u_0^1)+\Phi_\alpha (u_0^2)\le R \big)\\
&\ge \frac{1}{2}\varepsilon_*=:\varepsilon_1.
\end{align*}
The proof is thus finished.

\end{proof}

Lastly, we provide the proof of Lemma \ref{lem:ergodicity:ell(k+1).neq.l|ell(k)=l)<1/2(1+(k-l)T)^-q}, which together with Lemma \ref{lem:ergodicity:P(d_1(u_1,u_2)>t^-q)} and \ref{lem:ergodicity:P(ell(k+1)=k+1|l(k)=infty)>epsilon} ultimately concludes Theorem \ref{thm:poly-mixing}.

\begin{proof}[Proof of Lemma \ref{lem:ergodicity:ell(k+1).neq.l|ell(k)=l)<1/2(1+(k-l)T)^-q}]

Without loss of generality, we may assume $l=0$ thanks to Markov property. From Definition \ref{def:ell_beta}, observe that 
\begin{align} \label{eqn:ell(k+1).neq.0|ell(k)=0}
\P\big(  \ell_{\theta,\beta}(k+1)\neq 0|\ell_{\theta,\beta}(k)=0  \big)& = \P\big(\ell_{\theta,\beta}(k+1)\neq 0|\ell_{\theta,\beta}(k)=0,\ell_{\theta,\beta}(0)=0  \big) \notag \\
&= \frac{  \P\big(\ell_{\theta,\beta}(k+1)\neq 0,\ell_{\theta,\beta}(k)=0|\ell_{\theta,\beta}(0)=0  \big) }{\P\big(\ell_{\theta,\beta}(k)=0|\ell_{\theta,\beta}(0)=0  \big)}.
\end{align} 
In order to produce inequality \eqref{ineq:ergodicity:ell(k+1).neq.l|ell(k)=l)<1/2(1+(k-l)T)^-q}, it suffices to derive an estimate on the numerator on the right-hand side of \eqref{eqn:ell(k+1).neq.0|ell(k)=0} while controlling the denominator away from below by zero. Regarding the former, we introduce the stopping times $\hat{\tau}_i$, $i=1,2,$
\begin{align*}
\hat{\tau}_i = 
\inf\big\{t\ge kT_1: E_n(t;u_0^i)-K_n(Q_{1,\alpha,n}+1)t\ge \theta+\beta^n \big\}.
\end{align*}
Observe that
\begin{align} \label{eqn:eqn:ell(k+1).neq.0|ell(k)=0:tauhat}
\big\{\ell_{\theta,\beta}(k+1)\neq 0,\ell_{\theta,\beta}(k)=0|\ell_{\theta,\beta}(0)=0  \big\} \subseteq \bigcup_{i=1,2}\{\hat{\tau}_i\le T_1|\ell_{\theta,\beta}(0)=0  \} .
\end{align}
Also, note that $\{\ell_{\theta,\beta}(0)=0 \} = \{ \Phi_\alpha(u_0^1) +\Phi_\alpha(u_0^2)  \le \beta \}$, by virtue of Definition \ref{def:ell_beta}. It follows that
\begin{align*}
&\P\big(\hat{\tau}_i\le T_1|\ell_{\theta,\beta}(0)=0  \big) \\
&= \P\big(  \sup_{t\in[kT_1,(k+1)T_1]} E_n(t;u_0^i)-K_n(Q_{1,\alpha,n}+1)t\ge \theta+\beta^n | \ell_{\theta,\beta}(0)=0 \big)\\
&\le  \P\big(  \sup_{t\in[kT_1,(k+1)T_1]} E_n(t;u_0^i)-K_n(Q_{1,\alpha,n}+1)t\ge \theta+\Phi_\alpha(u(t;u_0^i))^n | \ell_{\theta,\beta}(0)=0 \big).
\end{align*}
At this point, there are two cases to be considered depending on the value of $k$. On the one hand, when $k=0$, in light of Lemma \ref{lem:P(sup[Phi+int.Phi])}, cf. \eqref{ineq:P(sup_[0,T][Phi+int.Phi])} with $\rho=\theta/\sqrt{T_1}$, we obtain
\begin{align*}
&\P\big(\hat{\tau}_i\le T_1|\ell_{\theta,\beta}(0)=0  \big) \\
&\le  \P\big(  \sup_{t\in[0,T_1]} E_n(t;u_0^i)-K_nQ_{1,\alpha,n}t\ge \theta+\Phi_\alpha(u(t;u_0^i))^n | \ell_{\theta,\beta}(0)=0 \big)\\
&\le  \frac{K_{1,\alpha,n} T_1^{q/2}}{\theta^{q}}\big( \E\big[ \Phi_\alpha(u_0^i)^{nq}|    \ell_{\theta,\beta}(0)=0  \big]+Q^q_{2,\alpha,n} \big)\\
&\le  \frac{K_{1,\alpha,n} T_1^{q/2}}{\theta^{q}}\big( \beta^{nq}+Q^q_{2,\alpha,n} \big).
\end{align*}
It follows that for all $\theta=\theta(T_1,\beta,Q_{2,\alpha,n})$ large enough,
\begin{align*}
&\P\big(\hat{\tau}_i\le T_1|\ell_{\theta,\beta}(0)=0  \big) \le \frac{1}{8}.
\end{align*}
On the other hand, when $k\ge 1$, we invoke \eqref{ineq:P(sup_[t>T][Phi+int.Phi])} to deduce
\begin{align*}
&\P\big(\hat{\tau}_i\le T_1|\ell_{\theta,\beta}(0)=0  \big) \\
&\le  \P\big(  \sup_{t\in[kT_1,\infty)} E_n(t;u_0^i)-K_n(Q_{1,\alpha,n}+1)t\ge \theta+\Phi_\alpha(u(t;u_0^i))^n | \ell_{\theta,\beta}(0)=0 \big)\\
& \le \frac{K_{2,\alpha,n}}{(kT_1+\theta)^{q/2-1}}\big(\E\big[ \Phi_\alpha(u_0^i)^{nq}|    \ell_{\theta,\beta}(0)=0  \big]+Q^q_{2,\alpha,n} +1 \big)\\
&\le  \frac{K_{2,\alpha,n}}{(kT_1+\theta)^{q/2-1}}\big(\beta^{nq}+Q^q_{2,\alpha,n} +1 \big) .
\end{align*}
By taking $\theta$ further to infinity larger independent of $k$, we also infer
\begin{align*}
\P\big(\hat{\tau}_i\le T_1|\ell_{\theta,\beta}(0)=0  \big) \le \frac{1}{8(1+kT_1)^{q/2-2}}.
\end{align*}
From both cases, we arrive at the bound
\begin{align*}
\P\big(\hat{\tau}_i\le T_1|\ell_{\theta,\beta}(0)=0  \big) \le \frac{1}{8(1+kT_1)^{q/2-2}},\quad k=0,1,2,\dots
\end{align*}
Since the above estimate holds for arbitrary $q>2$, in light of expression \eqref{eqn:eqn:ell(k+1).neq.0|ell(k)=0:tauhat}, we get for all $q>4$,
\begin{align} \label{ineq:P(ell(k+1).neq.0,ell(k)=0|ell(0)=0)<1/4.T}
\P\big(\ell_{\theta,\beta}(k+1)\neq 0,\ell_{\theta,\beta}(k)=0|\ell_{\theta,\beta}(0)=0 \big)\le \frac{1}{4(1+kT_1)^{q+1}},\quad k=0,1,2,\dots
\end{align}

With regard to the denominator on the right-hand side of \eqref{eqn:ell(k+1).neq.0|ell(k)=0}, we claim that 
\begin{align} \label{ineq:P(ell(k)=0|ell(0)=0)>1/2}
\P\big(\ell_{\theta,\beta}(k)=0|\ell_{\theta,\beta}(0)=0  \big)\ge \frac{1}{2},\quad k\ge 0.
\end{align}
The case $k=0$ is trivial as the left-hand side probability is immediately one. Considering $k\ge 1$, we adopt a complement strategy as follows:
\begin{align*}
\big\{\ell_{\theta,\beta}(k)\neq 0|\ell_{\theta,\beta}(0)=0  \big\} \subseteq\bigcup_{i=0}^{k-1} \big\{\ell_{\theta,\beta}(i+1)\neq 0,\ell_{\theta,\beta}(i)=0|\ell_{\theta,\beta}(0)=0  \big\}.
\end{align*}
In turn, estimate \eqref{ineq:P(ell(k+1).neq.0,ell(k)=0|ell(0)=0)<1/4.T} implies that
\begin{align*}
\P\big(\ell_{\theta,\beta}(k)\neq 0|\ell_{\theta,\beta}(0)=0   \big) \le \sum_{i=0}^{k-1}\frac{1}{4(1+kT_1)^{q}}
&\le \frac{1}{4}+\sum_{i=1}^\infty \frac{1}{4(1+kT_1)^{q+1}} \\
&\le \frac{1}{4}+\int_0^\infty\frac{1}{(1+yT_1)^{q+1}}\d y\\
&= \frac{1}{4}+\frac{1}{qT_1}.
\end{align*}
Since $q\ge 4$ and $T_1\ge T_*\ge 1$, cf. \eqref{form:T^*}, we obtain
\begin{align*}
\P\big(\ell_{\theta,\beta}(k)\neq 0|\ell_{\theta,\beta}(0)=0   \big)\le \frac{1}{2},
\end{align*}
which implies \eqref{ineq:P(ell(k)=0|ell(0)=0)>1/2}, as claimed. 

Finally, we combine \eqref{ineq:P(ell(k+1).neq.0,ell(k)=0|ell(0)=0)<1/4.T} and \eqref{ineq:P(ell(k)=0|ell(0)=0)>1/2} together with expression \eqref{eqn:ell(k+1).neq.0|ell(k)=0} to produce the bound
\begin{align*}
\P\big(  \ell_{\theta,\beta}(k+1)\neq 0|\ell_{\theta,\beta}(k)=0  \big) \le  \frac{1}{2(1+kT_1)^{q}}, \quad k=0,1,2,\dots
\end{align*}
The proof is thus finished.

\end{proof}

\begin{remark} \label{remark:ergodicity:Girsanov} We note that the proofs of Lemmas \ref{lem:ergodicity:P(ell(k+1)=k+1|l(k)=infty)>epsilon} and \ref{lem:ergodicity:ell(k+1).neq.l|ell(k)=l)<1/2(1+(k-l)T)^-q} do not involve Girsanov Theorem that plays a central role in the coupling arguments of \cite{debussche2005ergodicity,gao2024polynomial,
nersesyan2024exponential,nersesyan2024polynomial, nguyen2024inviscid,
odasso2006ergodicity}. This is because in these works, different Foias-Prodi estimates are used to control the growth rate of the high modes assuming that the two solutions' low modes agree over time. Simultaneously, the Girsanov Theorem is employed to guarantee that one can tune the coupling argument so as to ensure the low modes staying the same. In our context, since we rely on the large damping condition, we are able to bypass the changing of measures, thereby significantly simplifying the proof while successfully achieving a similar coupling effect.
    
\end{remark}

\appendix

\section{Strichartz Estimates} \label{sec:Strichartz}

In this section, we collect useful pathwise estimates on the Schr\"odinger semigroup $S(t)$ that we have employed to establish the main results in the previous section. We start with Lemmas \ref{lem:Strichartz:S(t)u} and \ref{lem:Strichartz} giving deterministic Strichartz inequalities, whose proofs can be found in \cite{cazenave2003semilinear}.

\begin{lemma}{\cite[Corollary 2.2.6]{cazenave2003semilinear}} \label{lem:Strichartz:S(t)u}  If $t\neq 0$, the for all $s\in\rbb$, $p\in[2,\infty]$,
\begin{align} \label{ineq:Strichartz:S(t)u}
    \|S(t) u\|_{H^{s,p}}\le \frac{1}{(4\pi |t|)^{d(\frac{1}{2}-\frac{1}{p})}}\|u\|_{H^{s,p'}},
\end{align}
    where $p'$ denotes the Holder conjugate of $p$, i.e., $\frac{1}{p}+\frac{1}{p'}=1$.
\end{lemma}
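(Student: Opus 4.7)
The plan is to combine the explicit Fresnel-type kernel of $S(t)=e^{-\i t\triangle}$ with Riesz-Thorin interpolation, and then lift the resulting $L^{p'}\to L^p$ bound to the Bessel potential scale $H^{s,p'}\to H^{s,p}$ via a Fourier-multiplier commutation argument. The two endpoint estimates needed are (i) the dispersive bound $L^1\to L^\infty$ and (ii) the unitarity of $S(t)$ on $L^2$.

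For (i), I would first work with $u$ in the Schwartz class and record the fundamental solution
\[
(S(t)u)(x) = \frac{1}{(4\pi \i t)^{d/2}} \int_{\rbb^d} e^{\i|x-y|^2/(4t)} u(y)\,\d y,\qquad t\neq 0,
\]
from which $\|S(t)u\|_{L^\infty}\le (4\pi|t|)^{-d/2}\|u\|_{L^1}$ follows by bounding the oscillatory kernel pointwise by its absolute value $(4\pi|t|)^{-d/2}$. For (ii), Plancherel together with the fact that $S(t)$ is a Fourier multiplier of modulus one yields $\|S(t)u\|_{L^2}=\|u\|_{L^2}$. Applying Riesz-Thorin with interpolation parameter $\theta=2/p\in[0,1]$ (so that the interpolated domain exponent is precisely the H\"older conjugate $p'$) then produces
\[
\|S(t)u\|_{L^p}\le \bigl((4\pi|t|)^{-d/2}\bigr)^{1-\theta}\cdot 1^{\theta}\,\|u\|_{L^{p'}} = (4\pi|t|)^{-d(1/2-1/p)}\|u\|_{L^{p'}},
\]
which is the $s=0$ case of the claim.

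To upgrade to the Sobolev-Bessel scale, introduce the Bessel multiplier $J^s:=\mathscr{F}^{-1}[(1+|\xi|^2)^{s/2}\mathscr{F}\,\cdot\,]$, so that $\|f\|_{H^{s,p}}=\|J^s f\|_{L^p}$ by definition. Since $S(t)$ and $J^s$ are both Fourier multipliers they commute, and therefore
\[
\|S(t)u\|_{H^{s,p}}=\|J^s S(t)u\|_{L^p}=\|S(t)J^s u\|_{L^p}\le (4\pi|t|)^{-d(1/2-1/p)}\|J^s u\|_{L^{p'}}=(4\pi|t|)^{-d(1/2-1/p)}\|u\|_{H^{s,p'}}.
\]
A standard density argument, using that Schwartz functions are dense in $H^{s,p'}$ for $1\le p'<\infty$, extends the bound to all admissible $u$.

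The proof is essentially textbook and I do not expect any serious obstacle; the only mild care is at the endpoint $p=\infty$ (equivalently $p'=1$), where one must interpret $H^{s,\infty}$ via the Bessel potential definition adopted in the paper and handle density through duality or by working at the Schwartz level before passing to the limit. No additional analytic input is required beyond the explicit kernel formula, Plancherel, Riesz-Thorin, and the trivial commutativity of Fourier multipliers.
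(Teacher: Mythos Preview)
Your proof is correct and is precisely the standard argument one finds in Cazenave's textbook (the cited reference): explicit kernel for the dispersive $L^1\to L^\infty$ endpoint, Plancherel for the $L^2$ isometry, Riesz--Thorin interpolation, and commutation with the Bessel potential $J^s$ to pass to $H^{s,p}$. The paper does not supply its own proof of this lemma at all---it simply records the statement with the citation \cite[Corollary 2.2.6]{cazenave2003semilinear}---so there is nothing to compare beyond confirming that your argument matches the classical one underlying that reference.
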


\begin{definition}{\cite[Definition 2.3.1]{cazenave2003semilinear}} \label{def:admissble-pair} A pair $(\gamma,r)$ is called \textup{admissible} if
\begin{align*}
    \frac{2}{\gamma}+\frac{d}{r}=\frac{d}{2},\quad \text{and}\quad (\gamma,r)\neq (2,\infty),
\end{align*}
    and
    \begin{align*}
        \begin{cases}
            2\le r\le \infty, & d=1,\\
            2\le r<\infty,&d=2,\\
            2\le r \le \frac{2d}{d-2},& d\ge 3.
        \end{cases}
    \end{align*}
\end{definition}

\begin{lemma}{\cite[Theorem 2.3.3]{cazenave2003semilinear}} \label{lem:Strichartz}
Let $(p,r)$ be an admissible pair as in Definition \ref{def:admissble-pair}. Then, the following properties hold:

1. For every $\varphi\in L^2(\rbb^d) $, the function $t\mapsto S(t)=e^{-it\triangle}\varphi$ belongs to $L^p(\rbb;L^r(\rbb^d))\cap C(\rbb;L^2(\rbb^d)) $. Furthermore, there exists a positive constant $C$ such that
\begin{align} \label{ineq:Strichartz:u_0:deterministic}
\|S(\cdot)\varphi\|_{L^p(\rbb;L^r(\rbb^d))} \le C\|\varphi\|_{L^2(\rbb^d)}.
\end{align}

2. Let $I$ be an interval in $\rbb$ such that $0\in J= \bar{I}$. If $(\gamma,\rho)$ is an admissible pair and $f\in L^{\gamma'}(I;L^{\rho'}(\rbb^d))$, then the function $t\mapsto  G_f(t)=\int_0^t S(t-s)f(s)\textup{d} s$ belongs to $L^p(I;L^{r}(\rbb^d))\cap C(J;L^2(\rbb^d))$. Furthermore, there exists a positive constant $C$ independent of $I$ and $f$ such that
\begin{align} \label{ineq:Strichartz:int:deterministic}
    \|G_f\|_{ L^p(I;L^{r}(\rbb^d)) }\le C \|f\|_{L^{\gamma'}(I;L^{\rho'}(\rbb^d))}.
\end{align}
    
\end{lemma}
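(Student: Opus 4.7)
The plan is to prove both estimates by combining the dispersive bound from Lemma \ref{lem:Strichartz:S(t)u} with a duality ($TT^*$) argument and the Hardy--Littlewood--Sobolev inequality, following the classical approach in \cite{cazenave2003semilinear}. Throughout, the admissibility relation $\frac{2}{p}+\frac{d}{r}=\frac{d}{2}$ is used precisely in the form $d\bigl(\tfrac{1}{2}-\tfrac{1}{r}\bigr)=\tfrac{2}{p}$, which matches the dispersive time-decay rate with the fractional integration exponent.

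For part 1, I would proceed by duality. Setting $T\varphi := S(\cdot)\varphi$, the bound $\|T\varphi\|_{L^p_tL^r_x}\le C\|\varphi\|_{L^2}$ is equivalent (via $\|T\|^2=\|TT^*\|$) to the $L^{p'}_tL^{r'}_x \to L^p_tL^r_x$ boundedness of
\[
TT^* g(t) \;=\; \int_{\rbb} S(t-s) g(s)\,\d s.
\]
Applying Lemma \ref{lem:Strichartz:S(t)u} with $s=0$ gives $\|S(t-s)g(s)\|_{L^r_x}\le C|t-s|^{-2/p}\|g(s)\|_{L^{r'}_x}$, so by Minkowski
\[
\|TT^*g(t)\|_{L^r_x}\;\le\; C\int_{\rbb} |t-s|^{-2/p} \|g(s)\|_{L^{r'}_x}\,\d s.
\]
The right-hand side is a one-dimensional Riesz potential of order $1-\frac{2}{p}$ applied to $\|g(\cdot)\|_{L^{r'}_x}$; since $p>2$ we have $\frac{2}{p}\in(0,1)$ and the dual exponents are related by $\frac{1}{p'}-\frac{1}{p}=1-\frac{2}{p}$, so Hardy--Littlewood--Sobolev yields the desired mapping from $L^{p'}_t$ to $L^p_t$. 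Continuity into $C(\rbb;L^2)$ then follows from $\|S(t)\varphi\|_{L^2}=\|\varphi\|_{L^2}$ together with strong continuity of $S(t)$ on $L^2$, which is standard.

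For part 2, I would apply essentially the same scheme to the untruncated operator $\widetilde G f(t):=\int_{\rbb} S(t-s)f(s)\,\d s$, now with two possibly distinct admissible pairs $(p,r)$ and $(\gamma,\rho)$. Writing $\widetilde G=T_{(p,r)}T_{(\gamma,\rho)}^*$ and combining the dispersive bound (in the form $|t-s|^{-\frac{d}{2}\bigl(1-\frac{1}{r}-\frac{1}{\rho}\bigr)}$ after interpolation with the trivial $L^2$ bound) with HLS produces the required estimate for $\widetilde G$ from $L^{\gamma'}_tL^{\rho'}_x$ to $L^p_tL^r_x$. The truncated operator $G_f$ is then recovered from $\widetilde G$ via the Christ--Kiselev lemma, which is applicable as long as $p>\gamma'$; when $p=\gamma'$ a direct energy argument handles the diagonal case. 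Continuity of $G_f$ into $C(J;L^2)$ is then obtained from the $L^2$-energy identity $\frac{1}{2}\frac{\d}{\d t}\|G_f\|_{L^2}^2=\Re\langle f, G_f\rangle$ and a density/approximation argument.

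The main obstacle is the endpoint case $p=2$, $r=\frac{2d}{d-2}$ in dimension $d\ge 3$, for which the exponent $\frac{2}{p}=1$ falls exactly on the failure threshold of Hardy--Littlewood--Sobolev, so the TT* + HLS machinery collapses. This requires the deeper Keel--Tao argument based on a dyadic decomposition of the time variable and bilinear interpolation. Since the statement is quoted from \cite{cazenave2003semilinear}, I would either import the endpoint as a black box or, given that the applications in this paper (Lemma \ref{lem:Strichart:|u|_infty}) never require it, restrict attention to $(p,r)\neq(2,\tfrac{2d}{d-2})$, where the proof above is complete.
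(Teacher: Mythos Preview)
The paper does not prove this lemma at all; it is stated as a direct citation of \cite[Theorem 2.3.3]{cazenave2003semilinear} and used as a black box. Your sketch of the classical $TT^*$ + Hardy--Littlewood--Sobolev argument (with Christ--Kiselev for the retarded estimate and the Keel--Tao machinery reserved for the endpoint) is the standard proof and is correct in outline, but there is no ``paper's own proof'' to compare it against.
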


Next, we state and prove Lemma \ref{lem:Strichart:|u|_infty} providing useful pathwise estimates on the semigroup $S(t)$ on any time interval $[0,T]$. We note that the results of Lemma \ref{lem:Strichart:|u|_infty} appeared in the proofs of Lemma \ref{lem:Strichart:int_0^t|u|_infty^2sigma} and the irreducibility condition in Lemma \ref{lem:irreducibility}. Altogether, they are invoked to establish Theorem \ref{thm:poly-mixing} in Section \ref{sec:poly-mixing}.  

\begin{lemma} \label{lem:Strichart:|u|_infty} Denote $F(u)=|u|^{2\sigma}u$. In dimension $d=2,3$, for all $u_0\in H^1$, $u(\cdot)\in L^2((0,T);H^1)$, $T>0$, $\lambda\ge 1$, there exists a positive constant $C=C(\sigma)$ independent of $u_0$, $u$, $T$ and $\lambda$ such that the followings hold:

1. For all $\sigma>0$, 
\begin{align} \label{ineq:Strichart:|u_0|_infty}
\int_0^T \big\|e^{-\lambda t}S(t)u_0\big\|^{2\sigma}_{L^\infty}\d t\le C\|u_0\|^{2\sigma}_{H^1}.
\end{align}

2. Suppose that
\begin{align}
    \begin{cases}
        \sigma >0,& d=2,\\
        \sigma\in(1/6,3/2) ,& d=3.
    \end{cases}
\end{align}
There exists a positive constant $q_\sigma >2$ such that 
\begin{align} \label{ineq:Strichart:|u|_infty}
\int_0^T \Big\|\int_0^t e^{-\lambda(t-s)} S(t-s)F(u(s))\d s\Big\|_{L^\infty}^{2\sigma} \d t\le  C\Big(T+\int_0^T \|u(s)\|^{q_\sigma}_{H^1}\d s\Big).
\end{align}

\end{lemma}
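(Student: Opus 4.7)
The plan is to combine the Sobolev embedding $H^{s,r}\subset L^\infty$ (valid for $s>d/r$), the dispersive/Strichartz estimates of Lemmas \ref{lem:Strichartz:S(t)u}--\ref{lem:Strichartz}, and Young's convolution inequality on $[0,T]$. For part 1, I will first pass from $L^\infty$ to $H^{1,r}$ via Sobolev with $r>d$ (available in $d\le 3$), then pick an admissible pair $(p,r)$ (Definition \ref{def:admissble-pair}) with $p\ge 2\sigma$. Applying Lemma \ref{lem:Strichartz} to $(1-\triangle)^{1/2}u_0$ yields $\|S(\cdot)u_0\|_{L^p(\mathbb{R};H^{1,r})}\le C\|u_0\|_{H^1}$. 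H\"older in time, using the factor $e^{-2\sigma\lambda t}$ whose $L^{p/(p-2\sigma)}$-norm is uniformly bounded in $\lambda\ge 1$ (when $p>2\sigma$; for $p=2\sigma$ Strichartz applies directly), absorbs the exponent mismatch and gives the desired $T$-independent bound.

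For part 2, I will apply Sobolev followed by the pointwise dispersive estimate of Lemma \ref{lem:Strichartz:S(t)u} to obtain
\[
\Big\|\int_0^t e^{-\lambda(t-\tau)}S(t-\tau)F(u(\tau))\,d\tau\Big\|_{L^\infty}\le C\int_0^t e^{-\lambda(t-\tau)}(t-\tau)^{-a}\|F(u(\tau))\|_{H^{s,r'}}\,d\tau,
\]
with $a=d(1/2-1/r)<1$ and $s>d/r$. The key step is the nonlinear bound $\|F(u)\|_{H^{s,r'}}\le C\|u\|_{H^1}^{2\sigma+1}$, which I will establish by the product/chain rule, H\"older, and the Sobolev embeddings $H^1\subset L^p$ ($p\in[2,\infty)$ for $d=2$, $p\in[2,6]$ for $d=3$). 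For $d=3$ with $s=1$, the viable window is $r'\in(6/5,3/2)\cap[2/(2\sigma+1),6/(2\sigma+3)]$, nonempty precisely when $\sigma\in(1/6,1)$; the threshold $\sigma>1/6$ thus arises naturally from $2/(2\sigma+1)<3/2$. For $\sigma\in[1,3/2)$, I shift to $s<1$ and distribute a fractional derivative between $|u|^{2\sigma}$ and $u$ via fractional Leibniz, producing the constraint $s\in((5-2\sigma)/4,\,2-\sigma)\cap(0,1]$, which remains nonempty throughout $[1,3/2)$. In dimension $d=2$, the kernel is integrable for every finite $r>2$ and no upper bound on $\sigma$ is needed.

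With the pointwise bound in hand, write $\|G(t)\|_{L^\infty}\le C(K\ast h)(t)$ where $K(\tau)=e^{-\lambda\tau}\tau^{-a}\mathbf{1}_{\tau>0}$ and $h(\tau)=\|u(\tau)\|_{H^1}^{2\sigma+1}$. Young's inequality with exponents $(1,2\sigma,2\sigma)$ gives $\int_0^T\|G\|_{L^\infty}^{2\sigma}\,dt\le C\|K\|_{L^1}^{2\sigma}\int_0^T\|u\|_{H^1}^{2\sigma(2\sigma+1)}\,ds$, and a change of variables shows $\|K\|_{L^1}=\Gamma(1-a)\lambda^{a-1}\le\Gamma(1-a)$ for $\lambda\ge 1$. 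H\"older in time with exponent $q_\sigma>\max\{2,\,2\sigma(2\sigma+1)\}$ combined with the weighted AM--GM inequality $T^\alpha A^{1-\alpha}\le\alpha T+(1-\alpha)A$ (with $\alpha=1-2\sigma(2\sigma+1)/q_\sigma$) then upgrades this to $\le C(T+\int_0^T\|u(s)\|_{H^1}^{q_\sigma}\,ds)$. The main obstacle is the delicate balancing of the three constraints (dispersive integrability $r<6$, Sobolev embedding $r>d/s$, and the nonlinear bound) for $d=3$ near $\sigma=1/6$ and $\sigma=3/2$, where the admissible windows for $(s,r)$ collapse and one must cleanly transition from the $s=1$ regime to the $s<1$ regime at $\sigma=1$.
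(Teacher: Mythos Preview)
Your strategy---Sobolev into $L^\infty$, the pointwise dispersive bound of Lemma~\ref{lem:Strichartz:S(t)u}, the nonlinear estimate $\|F(u)\|_{H^{s,r'}}\lesssim\|u\|_{H^1}^{2\sigma+1}$, and then a convolution bound exploiting $e^{-\lambda\tau}\tau^{-a}\in L^1(\rbb_+)$ uniformly in $\lambda\ge1$---is exactly the paper's. Two points to tighten. First, your Young step ``with exponents $(1,2\sigma,2\sigma)$'' needs $2\sigma\ge1$; for $\sigma<1/2$ the paper instead uses $(K\ast h)^{2\sigma}\le 1+K\ast h$ and then $\|K\ast h\|_{L^1}\le\|K\|_{L^1}\|h\|_{L^1}$, which yields the additive $T$ directly (Case~1a). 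Second, for $d=3$ and $\sigma\in(1,3/2)$ the paper avoids fractional Leibniz altogether: it embeds $H^{1,p}\subset H^{\theta,r'}$ via $1/p-1/r'=(1-\theta)/3$ and then invokes Lemma~\ref{lem:F_alpha} at the integer-order $H^{1,p}$ level with $p=6/(2\sigma+3)$, so the fractional chain rule for $|u|^{2\sigma}$ never enters. Your route through Kato--Ponce should also close, but the embedding trick is shorter. Organizationally the paper slices $d=3$ into three sub-ranges $[1/3,1]$, $(1,3/2)$, $(1/6,1/3)$, with the first handled by the inhomogeneous Strichartz estimate \eqref{ineq:Strichartz:int:deterministic} at the endpoint pair $(2,6)$ rather than the dispersive kernel; your two-case split (according to whether $s=1$ suffices) is more uniform and covers $(1/6,1)$ in one stroke.
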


\begin{remark} \label{rem:S(t)F}
    Following closely the proof of \cite[Proposition 3.1]{brzezniak2023ergodic}, one can also obtain the bound
    \begin{align} \label{ineq:Strichart:|u|_infty:BBZ}
        \int_0^T \Big\|\int_0^t S(t-s)F(u(s))\d s\Big\|_{L^\infty}^{2\sigma} \d t\le  C\Big(T^{1+\varepsilon}+\int_0^T \|u(s)\|^{q_\sigma}_{H^1}\d s\Big),
\end{align}
for some positive constant $\varepsilon>0$. However, as presented in Section \ref{sec:poly-mixing:proof-main-result}, the proof of the mixing rate requires a linear growth in time on the $L^\infty$ norm of the semigroup $S(t)$ acting on $F$. This turns out to be possible by exploiting the nature of the damping effect, hence the result of \eqref{ineq:Strichart:|u|_infty}. 

\end{remark}

In order to prove Lemma \ref{lem:Strichart:|u|_infty}, it is crucial to produce suitable bounds on the nonlinearity $F(u)=|u|^{2\sigma}u$. For this purpose, we assert a relation between $\||u|^{2\sigma}u\|_{H^{1,p}}$ and $\|u\|_{H^1}$ below in Lemma \ref{lem:F_alpha}, whose proof is a slightly rework of that of \cite[Lemma C.1]{brzezniak2023ergodic}

\begin{lemma} \label{lem:F_alpha} 

    1. In dimension $d=2$, for all $\sigma>0$ and $p$ satisfying
    \begin{align*}
        p\in[1,2)\quad \text{and} \quad p\ge \frac{2}{2\sigma+1},
    \end{align*}
the following holds
\begin{align} \label{ineq:F(u):d=2}
    \|F(u)\|_{H^{1,p}(\rbb^2)} \le C\|u\|^{2\sigma+1}_{H^1(\rbb^2)},\quad u\in H^1(\rbb^2),
\end{align}
for some positive constant $C$ independent of $u$.

2. In dimension $d=3$, for all $\sigma \in(0,\frac{3}{2}]$ and $p$ satisfying
\begin{align*}
        p\in[1,2)\quad \text{and} \quad  \frac{2}{2\sigma+1}\le p \le \frac{6}{2\sigma+3},
    \end{align*}
it holds that
\begin{align} \label{ineq:F(u):d=3}
    \|F(u)\|_{H^{1,\frac{6}{2\sigma+3}}(\rbb^3)} \le C\|u\|^{2\sigma+1}_{H^1(\rbb^3)},\quad u\in H^1(\rbb^3).
\end{align}
\end{lemma}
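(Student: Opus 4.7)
The plan is to reduce the $H^{1,p}$-norm of $F(u)=|u|^{2\sigma}u$ to $L^q$-norms of $u$ and $\nabla u$, and then invoke Sobolev embeddings. First I will differentiate $F$. Writing $|u|^{2\sigma}=(u\bar u)^{\sigma}$ one computes
\begin{align*}
\nabla F(u)=|u|^{2\sigma}\nabla u+\sigma|u|^{2\sigma-2}\bigl(\bar u\nabla u+u\nabla\bar u\bigr)u,
\end{align*}
so pointwise $|\nabla F(u)|\le C_\sigma|u|^{2\sigma}|\nabla u|$. Combining with the obvious $|F(u)|=|u|^{2\sigma+1}$, the proof reduces to proving, for the relevant ranges of $(p,\sigma,d)$, the two bounds
\begin{align*}
\|u\|_{L^{(2\sigma+1)p}}^{2\sigma+1}\lesssim \|u\|_{H^1}^{2\sigma+1},\qquad \bigl\||u|^{2\sigma}\nabla u\bigr\|_{L^p}\lesssim \|u\|_{H^1}^{2\sigma+1}.
\end{align*}

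For the second bound I will apply Hölder with the split $\frac{1}{p}=\frac{2\sigma}{q_1}+\frac{1}{2}$, i.e.\ $q_1=\frac{4\sigma p}{2-p}$ (which forces $p<2$ as assumed), giving
\begin{align*}
\bigl\||u|^{2\sigma}\nabla u\bigr\|_{L^p}\le \|u\|_{L^{q_1}}^{2\sigma}\|\nabla u\|_{L^2}.
\end{align*}
To finish I need the Sobolev embedding $H^1\hookrightarrow L^{q_1}$, as well as $H^1\hookrightarrow L^{(2\sigma+1)p}$ for the first bound. The arithmetic check is where I must be careful: the condition $q_1\ge 2$ (which is needed already for the $L^{q_1}$-embedding) is equivalent to $p\ge\frac{2}{2\sigma+1}$, precisely the lower bound in the hypothesis, while $(2\sigma+1)p\ge 2$ gives the same lower bound.

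In dimension $d=2$ any finite $q_1,(2\sigma+1)p\in[2,\infty)$ is admissible, so the only constraints are $p\in[1,2)$ and $p\ge 2/(2\sigma+1)$, matching the statement. In dimension $d=3$ the Sobolev embedding requires $q_1\le 6$ and $(2\sigma+1)p\le 6$. Computing $q_1\le 6\iff 4\sigma p\le 6(2-p)\iff p\le \frac{6}{2\sigma+3}$, which is exactly the stated upper bound and automatically enforces $(2\sigma+1)p\le 6(2\sigma+1)/(2\sigma+3)\le 6$. Moreover $\frac{6}{2\sigma+3}<2$ for $\sigma>0$, so picking $p=\frac{6}{2\sigma+3}$ is admissible, and the constraint $p\ge\frac{2}{2\sigma+1}$ is consistent (it yields the $\sigma$-interval for which the whole window is nonempty, compatible with $\sigma\le 3/2$).

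The main obstacle, such as it is, is purely bookkeeping: making sure the Hölder exponent $q_1$ stays within the Sobolev window $[2,6]$ in $d=3$ and $[2,\infty)$ in $d=2$, and that these windows match the endpoints $\frac{2}{2\sigma+1}$ and $\frac{6}{2\sigma+3}$ in the statement. No functional-analytic subtlety arises, since everything is pointwise followed by Hölder and Sobolev; the computation is independent of $\lambda$, $t$, and the PDE.
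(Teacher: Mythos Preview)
Your proof is correct and follows essentially the same route as the paper: compute $|\nabla F(u)|\lesssim |u|^{2\sigma}|\nabla u|$, apply H\"older with the split $\tfrac{1}{p}=\tfrac{2\sigma}{q_1}+\tfrac{1}{2}$ so that $q_1=\tfrac{4\sigma p}{2-p}$, and then invoke $H^1\hookrightarrow L^q$ for $q\in[2,\infty)$ in $d=2$ and $q\in[2,6]$ in $d=3$, which yields exactly the endpoint constraints $p\ge\tfrac{2}{2\sigma+1}$ and (in $d=3$) $p\le\tfrac{6}{2\sigma+3}$. The arithmetic checks you spell out match the paper's, and the upper bound $\sigma\le 3/2$ in $d=3$ is precisely the condition $\tfrac{6}{2\sigma+3}\ge 1$ ensuring the admissible window meets $[1,2)$.
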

\begin{proof} 1. Considering dimension $d=2$, by a routine computation, we have
\begin{align*}
    \grad F(u) = (\sigma +1)|u|^{2\sigma}\grad u + \sigma |u|^{2\sigma-2}u^2\grad \bar{u},
\end{align*}
whence
\begin{align*}
    |\grad F(u)| \le (1+2\sigma)|u|^{2\sigma}|\grad u|.
\end{align*}
We invoke Holder inequality to deduce that for $p\in [1,2)$
\begin{align*}
    \|F(u)\|_{H^{1,p}}\le C( \|F(u)\|_{L^p} + \|\grad F(u)\|_{L^p} ) \le C\big( \|u\|^{1+2\sigma}_{L^{p(1+2\sigma)}}+ \|u\|^{2\sigma }_{L^{\frac{4\sigma p}{2-p}}}\|\grad u\|_{L^2}  \big).
\end{align*}
Recall that in dimension $d=2$, $H^1\subset L^q$ for all $q\ge 2$. It follows that 
\begin{align*}
    \|u\|^{1+2\sigma}_{L^{p(1+2\sigma)}}+ \|u\|^{2\sigma }_{L^{\frac{4\sigma p}{2-p}}}\|\grad u\|_{L^2} \le C \|u\|^{1+2\sigma}_{H^1},
\end{align*}
provided that
\begin{align*}
    p(1+2\sigma) \ge 2\quad \text{and}\quad  \frac{4\sigma p}{2-p} \ge 2.
\end{align*}
Note that the above condition are equivalent to 
\begin{align*}
    p\ge \frac{2}{1+2\sigma}.
\end{align*}
Altogether, we establish \eqref{ineq:F(u):d=2}, as claimed.

2. Turning to dimension $d=3$, recall the embedding $H^1\subset L^q$, $q\in[2,6]$. As a consequence, the following holds
\begin{align*}
    \|u\|^{1+2\sigma}_{L^{p(1+2\sigma)}}+ \|u\|^{2\sigma }_{L^{\frac{4\sigma p}{2-p}}}\|\grad u\|_{L^2} \le C \|u\|^{1+2\sigma}_{H^1},
\end{align*}
 provided that
 \begin{align*}
     2\le p(1+2\sigma)\le 6,\quad \text{and}\quad 2\le \frac{4\sigma p}{2-p} \le 6.
 \end{align*}
The above inequalities are equivalent to
\begin{align*}
    \frac{2}{2\sigma+1}\le p\le \frac{6}{2\sigma+1}\quad \text{and}\quad \frac{2}{2\sigma+1}\le p\le \frac{6}{2\sigma+3}.
\end{align*}
Since the latter estimate implies the former, we deduce that
\begin{align*}
    \frac{2}{2\sigma+1}\le p\le \frac{6}{2\sigma+3}.
\end{align*}
 Note that the existence of such a $p\in[1,2)$ verifying the above condition is only possible for all $\sigma\in(0,\frac{3}{2}]$. In turn, this produces \eqref{ineq:F(u):d=3}, thereby finishing the proof.  
\end{proof}

Now, we combine the auxiliary results from Lemmas \ref{lem:Strichartz} and \ref{lem:F_alpha} to conclude Lemma \ref{lem:Strichart:|u|_infty}.

\begin{proof}[Proof of Lemma \ref{lem:Strichart:|u|_infty}]

1. We start with dimension $d=2$ and consider an arbitrary admissible pair $(\gamma,r)$ satisfying $\gamma\in(2\sigma,\infty)$. In particular, Definition \ref{def:admissble-pair} implies $r>2$. Recalling the Sobolev embedding $H^{1,q}\subset L^\infty$ for all $q>2$, we employ Holder's inequality and \eqref{ineq:Strichartz:u_0:deterministic} to estimate the left-hand side of \eqref{ineq:Strichart:|u_0|_infty} as follows.
\begin{align*}
    \int_0^T \|e^{-\lambda t}S(t) u_0\|^{2\sigma}_{L^\infty}\d t& \le C\int_0^T e^{-2\sigma\lambda t}\|S(t) u_0\|^{2\sigma}_{H^{1,r}}\d t\\
    &\le C \Big(\int_0^T e^{-2\sigma\frac{\gamma}{\gamma-2\sigma} \lambda t}\d t\Big)^{\frac{\gamma-2\sigma}{\gamma}} \Big( \int_0^T \|S(t)u_0\|_{H^{1,r}}^{\gamma} \Big)^{\frac{2\sigma}{\gamma}}\\
    &\le C\Big(\frac{\gamma-2\sigma}{2\sigma\gamma\lambda}\Big)^{\frac{\gamma-2\sigma}{\gamma}}\|u_0\|_{H^1}^{2\sigma}\\
    &\le C \|u_0\|_{H^1}^{2\sigma}.
\end{align*}
Since $C$ is positive constant independent of $T>0$ and $\lambda\ge 1$, we obtain \eqref{ineq:Strichart:|u_0|_infty} in dimension $d=2$.

Turning to dimension $d=3$, we recall the embedding $H^{1,q}\subset L^\infty$ for all $q>3$. Let $(\gamma,r)$ be an admissible pair such that $r>3$ and $\gamma>2\sigma$. It is important to note that in view of Definition \ref{def:admissble-pair}, $r>3$ is equivalent to $\gamma<4$. Thanks to the assumption $\sigma\in(0,2)$, it follows that such a pair $(\gamma,r)$ is always guaranteed to exist. We then may employ the same argument as in the case of $d=2$ while making use of Holder's inequality and \eqref{ineq:Strichartz:u_0:deterministic} to produce the bound
\begin{align*}
    \int_0^T \|e^{-\lambda t}S(t) u_0\|^{2\sigma}_{L^\infty}\d t
    &\le C \|u_0\|_{H^1}^{2\sigma}.
\end{align*}
This conclude \eqref{ineq:Strichart:|u_0|_infty} in dimension $d=3$, thereby completing part 1 of Lemma \ref{lem:Strichart:|u|_infty}.

2. With regard to \eqref{ineq:Strichart:|u|_infty}, for notational convenience, we denote 
\begin{align*}
    G(t) =\int_0^t  e^{-\lambda(t-s)}S(t-s)F(u(s))\d s.
\end{align*} 
There are two cases to be considered depending on the dimension $d$.

Case 1: $d=2$. 
Let $(\gamma,r)$ be a given admissible pair and be chosen later. We claim that one can tune $(\gamma,r)$  appropriately so as to produce a constant $q_\sigma>0$ satisfying \eqref{ineq:Strichart:|u|_infty}. To see this, we recall the Sobolev embedding $H^{1,r}\subset L^\infty$ for all $r>d=2$. Then, we invoke Lemma \ref{lem:Strichartz:S(t)u} (with $p=r>2$) to estimate $G(t)$ as follows.
\begin{align*}
    \|G(t)\|_{L^\infty}& = \Big\|\int_0^t  e^{-\lambda(t-s)}S(t-s)F(u(s))\d s\Big\|_{L^\infty}\\
    &\le \int_0^t  e^{-\lambda(t-s)}\|S(t-s)F(u(s))\|_{L^\infty}\d s\\
    &\le C \int_0^t  e^{-\lambda(t-s)} \|S(t-s)F(u(s))\|_{H^{1,r}}\d s\\
    &\leq  C\int_0^t  e^{-\lambda(t-s)} \frac{1}{|t-s|^{\frac{2}{\gamma}}}\|F(u(s))\|_{H^{1,r'}}\d s,
\end{align*}
whence,
\begin{align*}
     \|G(t)\|_{L^\infty}^{2\sigma}\le C\Big(\int_0^t  e^{-\lambda(t-s)} \frac{1}{|t-s|^{\frac{2}{\gamma}}}\|F(u(s))\|_{H^{1,r'}}\d s\Big)^{2\sigma}.
\end{align*}
At this point, there are two sub cases depending on the values of $\sigma$. 

Case 1a: $2\sigma\le 1$. In this case, we employ Young's and Holder's inequalities to infer
\begin{align*}
    &\Big(\int_0^t  e^{-\lambda(t-s)} \frac{1}{|t-s|^{\frac{2}{\gamma}}}\|F(u(s))\|_{H^{1,r'}}\d s\Big)^{2\sigma}\\
    &\le C+C\int_0^t  e^{-\lambda(t-s)} \frac{1}{|t-s|^{\frac{2}{\gamma}}}\|F(u(s))\|_{H^{1,r'}}\d s\\
    &\le C+ C\int_0^t  e^{-\lambda(t-s)} \frac{1}{|t-s|^{\frac{4}{\gamma}}}\d s+C\int_0^t e^{-\lambda(t-s)}\|F(u(s))\|^2_{H^{1,r'}}\d s.
\end{align*}
On the one hand, note that if $\gamma>4$, we have the estimate (recalling $\lambda\ge 1$)
\begin{align*}
    \int_0^t  e^{-\lambda(t-s)} \frac{1}{|t-s|^{\frac{4}{\gamma}}}\d s \le \int_0^\infty  e^{-\lambda s} \frac{1}{|s|^{\frac{4}{\gamma}}}\d s\le   \frac{1}{\lambda}+\frac{1}{1-\frac{4}{\gamma}}\le 1+\frac{1}{1-\frac{4}{\gamma}}.
\end{align*}
On the other hand, if $r'\in[1,2)$ satisfying $r'\ge 2/(1+2\sigma)$, we may invoke \eqref{ineq:F(u):d=2} to infer
\begin{align*}
    \int_0^t e^{-\lambda(t-s)}\|F(u(s))\|^2_{H^{1,r'}}\d s \le \int_0^t e^{-\lambda(t-s)}\|u(s)\|^{2(1+2\sigma)}_{H^{1}}\d s.
\end{align*}
It follows that
\begin{align*}
   \|G(t)\|^{2\sigma}_{L^\infty} &\le C\Big(\int_0^t  e^{-\lambda(t-s)} \frac{1}{|t-s|^{\frac{2}{\gamma}}}\|F(u(s))\|_{H^{1,r'}}\d s\Big)^{2\sigma} \\
    &\le C\Big(1+\frac{1}{1-\frac{4}{\gamma}}\Big)+C\int_0^t e^{-\lambda(t-s)}\|u(s)\|^{2(1+2\sigma)}_{H^{1}}\d s,
\end{align*}
provided the following conditions are met
\begin{align*}
    \gamma>4,\quad r>2,\quad \frac{1}{\gamma}+\frac{1}{r}=\frac{1}{2},\quad r'\in[1,2),\quad r'\ge \frac{2}{2\sigma+1}.
\end{align*}

\noindent

\noindent 
Since $2\sigma\in(0,1]$, observe that the choice
\begin{align*}
    r'=\frac{1}{2}\Big( \frac{2}{2\sigma+1}+2 \Big)=\frac{2\sigma+2}{2\sigma+1},\quad r=2\sigma+2,\quad \gamma = 4+\frac{2}{\sigma}
\end{align*}
verify these conditions.

Case 1b: $2\sigma>1$. In this case, we employ Holder's inequality to infer
\begin{align*}
    & \Big(\int_0^t  e^{-\lambda(t-s)} \frac{1}{|t-s|^{\frac{2}{\gamma}}}\|F(u(s))\|_{H^{1,r'}}\d s\Big)^{2\sigma}\\
     &=\Big(\int_0^t  e^{-\frac{1}{2}\lambda(t-s)} \frac{1}{|t-s|^{\frac{2}{\gamma}}}\cdot e^{-\frac{1}{2}\lambda(t-s)} \|F(u(s))\|_{H^{1,r'}}\d s\Big)^{2\sigma}\\
    & \le \Big(\int_0^t e^{-\frac{\sigma}{2\sigma-1}\lambda(t-s) }\frac{1}{|t-s|^{\frac{4\sigma}{\gamma(2\sigma-1)}}}\d s\Big)^{2\sigma-1}\int_0^t e^{-\sigma\lambda(t-s)}\|F(u(s))\|^{2\sigma}_{H^{1,r'}}\d s.
\end{align*}
Similar to Case 1a, on the one hand, if $\frac{4\sigma}{\gamma(2\sigma-1)}<1$,
\begin{align*}
    \Big(\int_0^t e^{-\frac{\sigma}{2\sigma-1}\lambda(t-s) }\frac{1}{|t-s|^{\frac{4\sigma}{\gamma(2\sigma-1)}}}\d s\Big)^{2\sigma-1}\le \Big( 
 \frac{2\sigma-1}{\sigma\lambda}+\frac{\gamma(2\sigma-1)}{4\sigma}\Big)^{2\sigma-1}.
\end{align*}
On the other hand, from \eqref{ineq:F(u):d=2}, if $r'\in[1,2)$ such that $r'\ge \frac{2}{2\sigma+1}$,
\begin{align*}
    \int_0^t e^{-\sigma\lambda(t-s)}\|F(u(s))\|^{2\sigma}_{H^{1,r'}}\d s\le \int_0^t e^{-\sigma\lambda(t-s)}\|u(s)\|^{2\sigma(1+2\sigma)}_{H^{1}}\d s.
\end{align*} 
It follows that
\begin{align*}
   \|G(t)\|^{2\sigma}_{L^\infty} &\le C\Big(\int_0^t  e^{-\lambda(t-s)} \frac{1}{|t-s|^{\frac{2}{\gamma}}}\|F(u(s))\|_{H^{1,r'}}\d s\Big)^{2\sigma} \\
    &\le C\Big( 
 \frac{2\sigma-1}{\sigma}+\frac{\gamma(2\sigma-1)}{4\sigma}\Big)^{2\sigma-1}\int_0^t e^{-\sigma\lambda(t-s)}\|u(s)\|^{2\sigma(1+2\sigma)}_{H^{1}}\d s,
\end{align*}
provided the following conditions are met
\begin{align*}
    \gamma>\frac{4\sigma}{2\sigma-1},\quad r>2,\quad \frac{1}{\gamma}+\frac{1}{r}=\frac{1}{2},\quad r'\in[1,2),\quad r'\ge \frac{2}{2\sigma+1}.
\end{align*} 
Since $2\sigma>1$, observe that the above holds for the instance 
\begin{align*}
    \gamma= \frac{4\sigma+1}{2\sigma-1},\quad r=\frac{2(4\sigma+1)}{3},\quad r'=\frac{2(4\sigma+1)}{8\sigma-1}.
\end{align*}
Now, we combine two cases to infer a positive constant $q_1=q_1(\sigma)>0$ and $q_2=q_2(\sigma)>2$ such that
\begin{align*}
     \|G(t)\|^{2\sigma}_{L^\infty}\le C+C\int_0^t e^{-q_1\lambda(t-s)}\|u(s)\|^{q_2}_{H^1}\d s.
\end{align*}
As a consequence, it holds that
\begin{align*}
    \int_0^T \|G(t)\|^{2\sigma}_{L^\infty}\d t &\le CT+C\int_0^T \int_0^t e^{-q_1\lambda(t-s)}\|u(s)\|^{q_2}_{H^1}\d s\d t\\
    & = CT + \frac{C}{q_1\lambda}\int_0^T\|u(s)\|^{q_2}_{H^{1}}\d s\\
    &\le CT + C\int_0^T\|u(s)\|^{q_2}_{H^{1}}\d s.
\end{align*}
In the above, we emphasize that the constant $C$ does not depend on either $T$ or $\lambda$. This produces estimate \eqref{ineq:Strichart:|u|_infty} in dimension $d=2$, and thus completes Case 1.

Case 2: $d=3$. In this case, recall that $H^{\theta,q}\subset L^\infty$ for all $\theta q>3$. Similarly to the previous situation, there are three sub cases to be considered depending on the values of $\sigma\in(1/6,3/2)$. 

Case 2a: $\frac{1}{3}\le \sigma\le 1$. Firstly, we employ Holder's inequality and $H^{1,6}\subset L^\infty$ to infer
\begin{align*}
    \int_0^T \|G(t)\|^{2\sigma}_{L^\infty}\d t & \le T^{1-\sigma}\Big(\int_0^T\|G(t)\|^{2}_{H^{1,6}}\d t\Big)^{\sigma}.
\end{align*}
Since $(2,6)$ is already an admissible pair, we employ \noindent 
\eqref{ineq:Strichartz:int:deterministic} with $(\gamma,r)=(2,6)$ to obtain the bound
\begin{align*}
    \Big(\int_0^T\|G(t)\|^{2}_{H^{1,6}}\d t\Big)^{\sigma}\le \Big(\int_0^T\Big\|\int_0^t S(t-s)F(u(s))\d s\Big\|^{2}_{H^{1,6}}\d t\Big)^{\sigma}\le C\Big(\int_0^T\|F(u(t))\|^{2}_{H^{1,\frac{6}{5}}}\d t\Big)^{\sigma}.
\end{align*}
In view of Lemma \ref{lem:F_alpha}, c.f., estimate \eqref{ineq:F(u):d=3}, 
it holds that
\begin{align*}
    \|F(u(t))\|^{2}_{H^{1,\frac{6}{5}}} \le C\|u(t)\|^{2(1+2\sigma)}_{H^1},
\end{align*}
provided that
\begin{align*}
    \frac{2}{2\sigma+1}\le \frac{6}{5}\le \frac{6}{2\sigma+3}.
\end{align*}
Note that the above condition is equivalent to $1/3\le \sigma\le 1$.
It follows that
\begin{align*}
    \int_0^T \|G(t)\|^{2\sigma}_{L^\infty}\d t\le CT^{1-\sigma}\Big(\int_0^T\|u(t)\|^{2(1+2\sigma)}_{H^1}\d t\Big)^{\sigma}\le CT+C\int_0^T\|u(t)\|^{2(1+2\sigma)}_{H^1}\d t.
\end{align*}
In the last implication above, we employed Young's inequality thanks to the fact that $\sigma\le 1$. This produces \eqref{ineq:Strichart:|u|_infty} for Case 2a, $1/3\le \sigma\le 1$.

Case 2b: $1<\sigma<3/2$. Let $\theta\in(0,1)$ be given and be chosen later. Given $(\gamma,\rho)$ an admissible pair satisfying $\theta r>3$ and $(\gamma,r)\neq (2,6)$, we invoke Lemma \ref{lem:Strichartz:S(t)u} again to infer
\begin{align*}
    \|G(t)\|_{L^\infty}&\le C\int_0^t e^{-\lambda(t-s)}\|S(t-s)F(u(s))\|_{H^{\theta,r}}\d s\\
    &\le C\int_0^t e^{-\lambda(t-s)}\frac{1}{|t-s|^{\frac{2}{\gamma}}}\|F(u(s))\|_{H^{\theta,r'}}\d s.
\end{align*}

\noindent
To further estimate the above right-hand side, we recall the Sobolev embedding
\begin{align*}
    \frac{1}{p}-\frac{1}{r'}=\frac{1-\theta}{3},\quad H^{1,p}\subset H^{\theta,r'}.
\end{align*}
This together with \eqref{ineq:F(u):d=3} implies that
\begin{align*} 
    \|F(u(s))\|_{H^{\theta,r'}} \le C\|F(u(s))\|_{H^{1,p}}\le C\|u\|_{H^1}^{1+2\sigma},
\end{align*}
for $p\in[1,2)$ satisfying 
\begin{align*}
    \frac{2}{2\sigma+1}\le p\le \frac{6}{2\sigma+3}.
\end{align*}
As a consequence, we obtain
\begin{align} \label{ineq:G(t):2b:1<sigma<3/2}
   \|G(t)\|_{L^\infty} \le   C\int_0^t e^{-\lambda(t-s)}\frac{1}{|t-s|^{\frac{2}{\gamma}}}\|u(s)\|^{1+2\sigma}_{H^{1}}\d s.
\end{align}
We now proceed to tune the parameters $\theta$, $r'$ and $p$ so as to verify \eqref{ineq:G(t):2b:1<sigma<3/2}. To this end, for simplicity, we pick 
\begin{align*}
    p=\frac{6}{2\sigma+3},
\end{align*}
which satisfies $p\in[1,6/5)$ thanks to the assumption $\sigma\in(1,3/2)$. Also, 
\begin{align*}
    \frac{1}{r'}=\frac{1}{p}-\frac{1-\theta}{3}=\frac{2\sigma+3}{6}-\frac{1-\theta}{3}.
\end{align*}
Note that the requirement $3/\theta<r<6$ is equivalent to \begin{align*}
    \frac{3-\theta}{3}<\frac{1}{r'}<\frac{5}{6}.
\end{align*}
In other words,
\begin{align*}
   \frac{3-\theta}{3}<  \frac{2\sigma+3}{6}-\frac{1-\theta}{3}<\frac{5}{6},
\end{align*}
which is simplified to
\begin{align*}
   \frac{1}{2}\Big( \frac{3}{2}-\sigma\Big)<\theta -\frac{1}{2}<\frac{3}{2}-\sigma.
\end{align*}
Since $\sigma\in(1,3/2)$, we may choose 
\begin{align*}
    \theta=\frac{1}{2}+\frac{3}{4}\Big(\frac{3}{2}-\sigma\Big)\in(0,1),
\end{align*}
and set $r'$, $r$ and $\gamma$ accordingly through the relations
\begin{align*}
    \frac{1}{r'}=\frac{2\sigma+3}{6}-\frac{1-\theta}{3},\quad \frac{1}{r}+\frac{1}{r'}=1,\quad\text{and}\quad \frac{2}{\gamma}+\frac{3}{r}=\frac{3}{2}.
\end{align*}
In turn, this allows us to establish the validity of \eqref{ineq:G(t):2b:1<sigma<3/2}. Next, since $\gamma>2$, let $\varepsilon=\varepsilon(\gamma)>0$ be sufficiently small such that
\begin{align*}
  \frac{2}{\gamma}(1+\varepsilon)<1  \quad\text{and}\quad 2\sigma\varepsilon<1.
\end{align*}
We employ Holder inequality to further deduce that
\begin{align*}
     &  C\Big(\int_0^t e^{-\lambda(t-s)}\frac{1}{|t-s|^{\frac{2}{\gamma}}}\|u(s)\|^{1+2\sigma}_{H^{1}}\d s\Big)^{2\sigma}\\
     &\le C\Big(\int_0^t e^{-\frac{1}{2}(1+\varepsilon)\lambda(t-s)}\frac{1}{|t-s|^{\frac{2}{\gamma}(1+\varepsilon)}} \d s\Big)^{\frac{2\sigma}{1+\varepsilon}}\Big(\int_0^t e^{-\frac{1+\varepsilon}{2\varepsilon}\lambda(t-s)}
 \|u(s)\|^{\frac{1+\varepsilon}{\varepsilon}(1+2\sigma)}_{H^{1}}\d s\Big)^{\frac{2\sigma\varepsilon}{1+\varepsilon}}\\
 &\le C\Big(\frac{2}{(1+\varepsilon)\lambda}+\frac{1}{1-\frac{2}{\gamma}(1+\varepsilon)}\Big)^{\frac{2\sigma}{1+\varepsilon}}\Big(\int_0^t e^{-\frac{1+\varepsilon}{2\varepsilon}\lambda(t-s)}
 \|u(s)\|^{\frac{1+\varepsilon}{\varepsilon}(1+2\sigma)}_{H^{1}}\d s\Big)^{\frac{2\sigma\varepsilon}{1+\varepsilon}}.
\end{align*}
This together with \eqref{ineq:G(t):2b:1<sigma<3/2} implies the bound
\begin{align*}
    \int_0^T \|G(t)\|^{2\sigma}_{L^\infty}\d t & \le C \int_0^T \Big(\int_0^t e^{-\frac{1+\varepsilon}{2\varepsilon}\lambda(t-s)}
 \|u(s)\|^{\frac{1+\varepsilon}{\varepsilon}(1+2\sigma)}_{H^{1}}\d s\Big)^{\frac{2\sigma\varepsilon}{1+\varepsilon}}\d t\\
 &\le CT +C\int_0^T \int_0^t e^{-\frac{1+\varepsilon}{2\varepsilon}\lambda(t-s)}
 \|u(s)\|^{\frac{1+\varepsilon}{\varepsilon}(1+2\sigma)}_{H^{1}}\d s\d t\\
 &= CT + C\frac{2\varepsilon}{(1+\varepsilon)\lambda} \int_0^T
 \|u(t)\|^{\frac{1+\varepsilon}{\varepsilon}(1+2\sigma)}_{H^{1}}\d t.
\end{align*}
In the above, we emphasize that $C$ is a positive constant independent of $\lambda$ and $T$. We therefore establish \eqref{ineq:Strichart:|u|_infty} in Case 2b, $1<\sigma<3/2$.

Case 2c: $1/6<\sigma<1/3$. We employ an argument similarly to Case 2b and observe that \eqref{ineq:G(t):2b:1<sigma<3/2} holds provided that
\begin{align*}
    p\in[1,2),\quad \frac{2}{2\sigma+1}\le p\le \frac{6}{2\sigma+3},\quad \frac{3-\theta}{3}<\frac{1}{r'}=\frac{1}{p}-\frac{1-\theta}{3}<\frac{5}{6}.
\end{align*}
Since $\sigma\in(1/6,1/3)$, we firstly pick 
\begin{align}
    p=\frac{2}{2\sigma+1}\in \Big(\frac{6}{5},2\Big).
\end{align}
So, it is required that
\begin{align*}
    \frac{3-\theta}{3}< \frac{2\sigma+1}{2}-\frac{1-\theta}{3}<\frac{5}{6},
\end{align*}
which is equivalent to
\begin{align*}
    \frac{2}{3}(1-\theta)<\sigma-\frac{1}{6},\quad\text{and}\quad \sigma+\frac{\theta}{3}<\frac{2}{3}.
\end{align*}
Note that the latter condition is immediately satisfied since $\theta\in(0,1)$ and $\sigma<1/3$ while the former is only valid provided $\sigma>1/6$. So, for simplicity, we pick $\theta$ given by
\begin{align*}
    1-\theta=\sigma-\frac{1}{6},
\end{align*}
and choose $r'$, $r$ and $\gamma$ in accordance with
\begin{align*}
    \frac{1}{r'}=\frac{2\sigma+3}{6}-\frac{1-\theta}{3},\quad \frac{1}{r}+\frac{1}{r'}=1,\quad\text{and}\quad \frac{2}{\gamma}+\frac{3}{r}=\frac{3}{2}.
\end{align*}
This allows us to establish \eqref{ineq:G(t):2b:1<sigma<3/2}. Then, we proceed in a similar fashion as in Case 2b to ultimately deduce
\begin{align*}
    \int_0^T \|G(t)\|^{2\sigma}_{L^\infty}\d t & \le CT + C\frac{2\varepsilon}{(1+\varepsilon)\lambda} \int_0^T
 \|u(t)\|^{\frac{1+\varepsilon}{\varepsilon}(1+2\sigma)}_{H^{1}}\d t,
\end{align*}
for suitable positive constants $\varepsilon=\varepsilon(\sigma)$ and $C=C(\sigma)$ independent of $T>0$ and $\lambda>1$.

Altogether, we  conclude the desired estimate \eqref{ineq:Strichart:|u|_infty} for some constant $q_\sigma>2$. The proof is thus finished.

\end{proof}

\section{Auxiliary Results} \label{sec:aux-results}

Let $\Gamma(t)$ denote the stochastic convolution solving the equation
\begin{equation} \label{eqn:Schrodinger:Gamma}
\d \Gamma (t) + \i \triangle \Gamma(t)\d t + \lambda \Gamma(t)\d t = Q \d W(t),\quad \Gamma(0)=0.
\end{equation}

\begin{lemma} \label{lem:moment-bound:Gamma}
Let $\Gamma(t)$ be the stochastic convolution solving \eqref{eqn:Schrodinger:Gamma}. Then, the followings hold for all $n\ge 1$:
\begin{align} \label{ineq:E.int.|Gamma|^2n_H}
\E\int_0^t\|\Gamma(s)\|^{2n}_{H}\d s \le c \frac{\|Q\|^{2n}_{L_{\HS}(U;H)}}{\lambda^{n}}t,
\end{align}
\begin{align}\label{ineq:E.int.|triangle.Gamma|^2n_H}
\E\int_0^t\|\triangle\Gamma(s)\|^{2n}_{H}\d s \le c \frac{\|Q\|^{2n}_{L_{\HS}(U;H^2)}}{\lambda^{n}}t,
\end{align}
where $c$ is a positive constant independent of $t$ and $Q$. Furthermore,
\begin{align}\label{ineq:E.int.|Gamma|^2sigma_L^infty}
\E\int_0^t\|\Gamma(s)\|^{2\sigma}_{L^\infty}\d s \le c \|Q\|^{2\sigma}_{L_{\HS}(U;H^1)}t,
\end{align}
where $\sigma$ is as in Assumption \ref{cond:sigma}.

\end{lemma}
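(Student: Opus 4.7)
The plan is to obtain all three bounds by It\^o calculations on the stochastic convolution equation \eqref{eqn:Schrodinger:Gamma}, exploiting that $S(t)=e^{-\i t\triangle}$ is unitary on every $H^s$ and commutes with $\triangle$ and $\grad$. This reduces \eqref{eqn:Schrodinger:Gamma} and its gradient/Laplacian counterpart to instances of the same It\^o template already used in the proof of Lemma \ref{lem:moment-bound:H}, with the simplification that no nonlinear contribution is present.

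For \eqref{ineq:E.int.|Gamma|^2n_H}, I will apply It\^o's formula to $\|\Gamma\|_H^{2n}$. The dispersive contribution $-2\Re\la\Gamma,\i\triangle\Gamma\ra_H$ vanishes since $\la\Gamma,\triangle\Gamma\ra_H=-\|\grad\Gamma\|_H^2$ is real, so the resulting estimate is of the exact form \eqref{ineq:d|u|^2n_H} without the $F_\alpha$-term:
\[
\d\|\Gamma\|_H^{2n} \le -n\lambda\|\Gamma\|_H^{2n}\,\d t + c\,\|Q\|^{2n}_{L_{\HS}(U;H)}/\lambda^{n-1}\,\d t + \d M_n.
\]
Taking expectation, applying Gronwall, and using $\Gamma(0)=0$ will yield $\E\|\Gamma(t)\|_H^{2n}\le c\|Q\|^{2n}_{L_{\HS}(U;H)}/\lambda^n$ uniformly in $t$, and integration in $t$ produces \eqref{ineq:E.int.|Gamma|^2n_H}. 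For \eqref{ineq:E.int.|triangle.Gamma|^2n_H}, since $\triangle$ commutes with both $S(t)$ and the damping, $\triangle\Gamma$ solves the analogous equation driven by $\triangle Q\,\d W$, and the same argument applied to $\|\triangle\Gamma\|_H^{2n}$ gives the bound.

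For \eqref{ineq:E.int.|Gamma|^2sigma_L^infty}, the argument splits by dimension. In $d=1$, the Sobolev embedding $H^1(\rbb)\subset L^\infty(\rbb)$ reduces matters to a moment bound on $\|\grad\Gamma\|_H^{2\sigma}$, which I would handle exactly as above applied to $\grad\Gamma$ (which commutes with $S(t)$ and satisfies the analogous equation driven by $\grad Q\,\d W$). In $d=2,3$, $H^1\not\subset L^\infty$, and I would instead select an admissible pair $(\gamma,r)$ in the sense of Definition \ref{def:admissble-pair} with $r>d$ so that $H^{1,r}\subset L^\infty$---for instance $(2,6)$ in $d=3$ and $(4,4)$ in $d=2$---and then establish a stochastic Strichartz-type inequality
\[
\E\int_0^T \|\Gamma(t)\|^{\gamma}_{H^{1,r}}\,\d t \le c\,T\,\|Q\|^{\gamma}_{L_{\HS}(U;H^1)}.
\]
This will be obtained by expanding $\Gamma(t)=\sum_k\int_0^t e^{-\lambda(t-s)}S(t-s)Qe_k\,\d B_k(s)$, applying It\^o's isometry pointwise in $x$ (together with Gaussian moment equivalence) and Fubini in $(t,s)$, and finally invoking the deterministic Strichartz bound \eqref{ineq:Strichartz:u_0:deterministic} for each $S(\cdot)Qe_k$ and its gradient $S(\cdot)\grad Qe_k$, with the factor of $T$ arising from the outer $\d t$-integration. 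A H\"older argument in time then converts the $\gamma$-th power into the $2\sigma$-th power, the exponent gap being absorbed into the $T$ factor, provided the admissible pair is chosen with $\gamma\ge 2\sigma$, which is always possible within the range of $\sigma$ permitted by Assumption \ref{cond:sigma}.

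The main obstacle is the $d=2,3$ case of \eqref{ineq:E.int.|Gamma|^2sigma_L^infty}: under $Q\in L_{\HS}(U;H^1)$ alone, a pointwise-in-time estimate of $\E\|\Gamma(t)\|_{L^\infty}^{2\sigma}$ is unavailable, and a direct It\^o expansion of $\|\Gamma\|_{L^\infty}^{2\sigma}$ is not meaningful since $L^\infty$ is non-Hilbertian. The \emph{time-averaged} nature of the statement is essential: only after integration in $t$ does the dispersive gain encoded in Strichartz furnish the extra spatial integrability needed for $L^\infty$. Reconciling the Hilbert-space stochastic calculus with the non-Hilbertian $L^r_x$ output of Strichartz is the delicate technical step of this proof.
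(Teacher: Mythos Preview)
Your treatment of \eqref{ineq:E.int.|Gamma|^2n_H} and \eqref{ineq:E.int.|triangle.Gamma|^2n_H} is exactly what the paper does: It\^o's formula on $\|\Gamma\|_H^{2n}$ (respectively $\|\triangle\Gamma\|_H^{2n}$), the template of \eqref{ineq:d|u|^2n_H} without the nonlinear term, expectation, and integration in time.

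For \eqref{ineq:E.int.|Gamma|^2sigma_L^infty} the paper does not argue at all: it simply invokes \cite[Proposition~3.1]{brzezniak2023ergodic}, which in turn rests on the stochastic Strichartz estimate of \cite[Proposition~2]{hornung2018nonlinear}. Your sketch is therefore more ambitious, and your overall strategy---select an admissible $(\gamma,r)$ with $r>d$ so that $H^{1,r}\subset L^\infty$, establish a time-averaged stochastic Strichartz bound, then H\"older in time down to the exponent $2\sigma$---is precisely the route those references take. The admissibility check you outline (any $\gamma>2$ in $d=2$; $\gamma\in[2,4)$ in $d=3$, which accommodates $2\sigma<4$ under Assumption~\ref{cond:sigma}) is correct.

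The one step that does not close as written is the derivation of the stochastic Strichartz bound itself. The route ``pointwise-in-$x$ It\^o isometry $+$ Gaussian moment equivalence $+$ Fubini $+$ deterministic Strichartz \eqref{ineq:Strichartz:u_0:deterministic}'' gives you, after Minkowski in the $\ell^2_k L^2_s$ index, a bound of the form
\[
\E\|\Gamma(t)\|_{L^r}^r \;\lesssim\; \Big(\sum_k \int_0^t e^{-2\lambda(t-s)}\|S(t-s)Qe_k\|_{L^r}^2\,\d s\Big)^{r/2},
\]
but integrating this in $t$ does not factor through the \emph{homogeneous} Strichartz norm $\|S(\cdot)Qe_k\|_{L^\gamma_t L^r_x}$ in any direct way (the exponent $r/2$ outside the $s$-integral obstructs Fubini, and the $t$-exponent you need is $\gamma$, not $r$). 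The references the paper cites handle this instead via Burkholder-type inequalities in the UMD space $L^r$ (or equivalently the $\gamma$-radonifying framework), which is what lets one pass the stochastic integral through the non-Hilbertian $L^\gamma_t L^r_x$ norm and then apply the deterministic inhomogeneous Strichartz estimate \eqref{ineq:Strichartz:int:deterministic}. You correctly flag this as the delicate step; to make the argument complete you should either cite those results as the paper does, or replace the pointwise-isometry heuristic by the Burkholder/$L^r$-valued stochastic integration machinery.
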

\begin{proof}
Firstly, from \eqref{eqn:Schrodinger:Gamma}, we apply It\^o's formula to $\|\Gamma(t)\|_{H}^2$ and obtain the identity
\begin{align*}
\d \|\Gamma\|^2_H +2\lambda \|\Gamma\|^2_H\d t= 2\Re\big(\la \Gamma,Q\d W\ra_H  \big)+\|Q\|^2_{\L_{\HS}(U;H)}\d t.
\end{align*}
Similarly, for $n>1$, we have
\begin{align*}
\d \|\Gamma\|^{2n}_H +2n \lambda \|\Gamma\|^{n}_H\d t&=n\|\Gamma\|^{2(n-1)}\|Q\|^2_{L_{\HS}(U;H)}\d t+ 2n \|\Gamma\|^{2(n-1)}_H\Re\big(\la \Gamma,Q\d W\ra_H  \big)\\
&\qquad + 2n(n-1) \|\Gamma\|^{2(n-2)}_H\sum_{k\ge 1}\big| \Re\big( \la \Gamma,Qe_k\ra_H \big) \big|^2\d t.
\end{align*}
Using a similar argument to \eqref{ineq:d|u|^2n_H}, we infer the estimate
\begin{align} \label{ineq:d|Gamma|^2n_H}
\d \|\Gamma\|^{2n}_H \le  -n\lambda \|\Gamma\|^{2n}_H\d t+ c\frac{\|Q\|^{2n}_{L_{\HS}(U;H)}}{\lambda^{n-1}}\d t + 2n\|\Gamma\|^{2(n-1)}_H\Re\big(\la \Gamma,Q\d W\ra_H\big).
\end{align}
In turn, this produces \eqref{ineq:E.int.|Gamma|^2n_H} by taking expectation on both sides of the above inequality.

Likewise, the following holds
\begin{align} \label{ineq:d|triangle.Gamma|^2n_H}
\d \|\triangle\Gamma\|^{2n}_H \le  -n\lambda \|\triangle \Gamma\|^{2n}_H\d t+ c\frac{\|Q\|^{2n}_{L_{\HS}(U;H^2)}}{\lambda^{n-1}}\d t + 2n\|\triangle \Gamma\|^{2(n-1)}_H\Re\big(\la \triangle \Gamma,\triangle Q\d W\ra_H\big).
\end{align}
By taking expectation, we obtain
\begin{align*}
\E\|\triangle \Gamma(t)\|^{2n}_H+n\lambda\int_0^t \E\|\triangle \Gamma(s)\|^{2n}_H\d s \le c \frac{\|Q\|^{2n}_{L_{\HS}(U;H^2)}}{\lambda^{n}}t,
\end{align*}
which implies \eqref{ineq:E.int.|triangle.Gamma|^2n_H}, as claimed.

Finally, the argument of \eqref{ineq:E.int.|Gamma|^2sigma_L^infty} can be found in the proof of \cite[Proposition 3.1]{brzezniak2023ergodic}, which employs the stochastic Strichartz estimate of \cite[Proposition 2]{hornung2018nonlinear}.

\end{proof}

Lastly, we collect a useful inequality in $\cbb$ through Lemma \ref{lem:|z|^(2-2sigma)z^2-|x|^(2-2sigma)<|x-z|^2sigma}. Particularly, we invoked Lemma \ref{lem:|z|^(2-2sigma)z^2-|x|^(2-2sigma)<|x-z|^2sigma} to establish the irreducibility condition in Lemma \ref{lem:irreducibility}.

\begin{lemma} \label{lem:|z|^(2-2sigma)z^2-|x|^(2-2sigma)<|x-z|^2sigma}
Let $z,x\in\cbb$ and $\sigma\ge 0$. Then, there exists a positive constant $c(\sigma)$ independent of $x$ and $z$ such that the followings hold

1. $\sigma\in[0,1/2]$:
\begin{align} \label{ineq:|z|^(2-2sigma)z^2-|x|^(2-2sigma)<|x-z|^2sigma:sigma<1/2}
|z|^{2\sigma-2}z^2-|x|^{2\sigma-2}x^2 \le c(\sigma) |z-x|^{2\sigma}.
\end{align}

2. $\sigma>1/2$:
\begin{align} \label{ineq:|z|^(2-2sigma)z^2-|x|^(2-2sigma)<|x-z|^2sigma:sigma>1/2}
|z|^{2\sigma-2}z^2-|x|^{2\sigma-2}x^2 \le c(\sigma)|z-x|( |x|^{2\sigma-1}+|z|^{2\sigma-1} ).
\end{align}
\end{lemma}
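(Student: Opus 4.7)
The plan is to treat $f\colon\cbb\to\cbb$ defined by $f(0)=0$ and $f(z)=|z|^{2\sigma-2}z^2$ for $z\ne 0$, and to control $|f(z)-f(x)|$ (reading the statement with moduli on both sides, as is needed for the application in Lemma \ref{lem:irreducibility}). Writing $f(z)=|z|^{2\sigma}\bigl(z/|z|\bigr)^2$ one has $|f(z)|=|z|^{2\sigma}$, and viewing $f$ as a map $\rbb^2\to\rbb^2$, the Jacobian satisfies $|Df(w)|\le C(\sigma)|w|^{2\sigma-1}$ for $w\ne 0$ by the product rule applied to the modulus factor $|w|^{2\sigma}$ and the unimodular factor $(w/|w|)^2$. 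This gradient bound, combined with the fundamental theorem of calculus along the segment $[x,z]\subset\cbb$, is the engine of both estimates.

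For the range $\sigma>1/2$ the exponent $2\sigma-1>0$ and $|Df|$ is continuous on all of $\cbb$, so the fundamental theorem yields
\[
|f(z)-f(x)|\le|z-x|\int_0^1|x+t(z-x)|^{2\sigma-1}\,\d t.
\]
The elementary inequality $|a+b|^p\le C_p(|a|^p+|b|^p)$ for $p>0$, applied to $a=x$ and $b=t(z-x)$, then reduced to $C(|x|^{2\sigma-1}+|z|^{2\sigma-1})$ after discarding $|z-x|^{2\sigma-1}\le C(|x|^{2\sigma-1}+|z|^{2\sigma-1})$, delivers the claim \eqref{ineq:|z|^(2-2sigma)z^2-|x|^(2-2sigma)<|x-z|^2sigma:sigma>1/2}.

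For $\sigma\in[0,1/2]$ the exponent $2\sigma-1\le 0$ makes $|Df|$ blow up at the origin, so the smooth approach fails whenever the segment $[x,z]$ passes near $0$. I would split according to the size of $M:=\max(|x|,|z|)$. In the \emph{large-perturbation} regime $|z-x|\ge M/4$, the crude triangle bound $|f(z)-f(x)|\le|z|^{2\sigma}+|x|^{2\sigma}\le 2M^{2\sigma}\le 2\cdot 4^{2\sigma}|z-x|^{2\sigma}$ already suffices. In the \emph{small-perturbation} regime $|z-x|<M/4$, assume without loss of generality $|z|=M$, so $|x|\ge 3M/4$; the reverse triangle inequality then gives $|x+t(z-x)|\ge 3M/4-|z-x|\ge M/2$ uniformly in $t\in[0,1]$. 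Applying the fundamental theorem yields $|f(z)-f(x)|\le C(\sigma)(M/2)^{2\sigma-1}|z-x|$, and the interpolation $|z-x|=|z-x|^{2\sigma}\cdot|z-x|^{1-2\sigma}\le |z-x|^{2\sigma}(M/4)^{1-2\sigma}$ (valid since $1-2\sigma\ge 0$) eliminates the $M$-dependence and produces \eqref{ineq:|z|^(2-2sigma)z^2-|x|^(2-2sigma)<|x-z|^2sigma:sigma<1/2}.

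The only mildly subtle step is the uniform lower bound $|x+t(z-x)|\ge M/2$ in the small-perturbation regime, which is exactly what necessitates the case split; the remainder is arithmetic and elementary H\"older-type interpolation.
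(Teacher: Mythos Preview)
Your proof is correct, but it follows a genuinely different route from the paper's.

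The paper treats the two ranges by separate devices. For $\sigma\in[0,1/2]$ it writes everything in polar form, expands
\[
\bigl||z|^{2\sigma}e^{2\i\theta_z}-|x|^{2\sigma}e^{2\i\theta_x}\bigr|^2
=\bigl(|z|^{2\sigma}+|x|^{2\sigma}\bigr)^2-4|x|^{2\sigma}|z|^{2\sigma}\cos^2(\theta_z-\theta_x),
\]
and reduces \eqref{ineq:|z|^(2-2sigma)z^2-|x|^(2-2sigma)<|x-z|^2sigma:sigma<1/2} to an explicit algebraic inequality in the single variable $\frac{2|x||z|}{|x|^2+|z|^2}\cos(\theta_z-\theta_x)$, verified by completing a square. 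For $\sigma>1/2$ it telescopes
\[
|z|^{2\sigma-2}z^2-|x|^{2\sigma-2}x^2
=|z|^{2\sigma-1}\Bigl(\tfrac{z^2}{|z|}-\tfrac{x^2}{|x|}\Bigr)
+|z|^{2\sigma}\tfrac{x^2}{|x|}\Bigl(\tfrac{1}{|z|}-\tfrac{1}{|x|}\Bigr)
+\bigl(|z|^{2\sigma}-|x|^{2\sigma}\bigr)\tfrac{x^2}{|x|^2},
\]
and bounds the first summand by feeding it back into the already-proved $\sigma=1/2$ case, the second by the triangle inequality, and the third by the real mean value theorem.

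Your argument is more unified: a single Jacobian bound $|Df(w)|\le C(\sigma)|w|^{2\sigma-1}$ plus the fundamental theorem of calculus along the segment $[x,z]$ handles both regimes, with the near/far split in the case $\sigma\le 1/2$ serving only to keep the segment away from the singularity at $0$. This is shorter and conceptually cleaner; the paper's version is more hands-on and has the mild structural advantage that the $\sigma>1/2$ case is built directly on the $\sigma\le 1/2$ case rather than re-deriving a gradient estimate. Either approach is perfectly adequate for the application in Lemma~\ref{lem:irreducibility}.
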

\begin{proof} Inequalities \eqref{ineq:|z|^(2-2sigma)z^2-|x|^(2-2sigma)<|x-z|^2sigma:sigma<1/2}-\eqref{ineq:|z|^(2-2sigma)z^2-|x|^(2-2sigma)<|x-z|^2sigma:sigma>1/2} are trivial if at least $x$ or $z$ is zero. Considering $x\neq 0$ and $z\neq 0$, let $\theta_z$ and $\theta_x$ respectively be the angles in $\cbb$ corresponding to $z$ and $x$, that is, $z=|z|e^{\i \theta_z}$ and $x=|x|e^{\i\theta_x}$. With regard to the case $\sigma\in[0,1/2]$, observe that
\begin{align*}
 \big||z|^{2\sigma-2}z^2-|x|^{2\sigma-2}x^2\big|^2 &=\big| |z|^{2\sigma}e^{\i 2\theta_z}-|x|^{2\sigma} e^{\i 2\theta_x} \Big|^2\\
& = |z|^{4\sigma}+|x|^{4\sigma}-2|x|^{2\sigma}|z|^{2\sigma}\cos\big(2(\theta_z-\theta_x)\big)\\
&= \big(|z|^{2\sigma}+|x|^{2\sigma}\big)^2-4|x|^{2\sigma}|z|^{2\sigma}\cos^2(\theta_z-\theta_x)
\end{align*}
Similarly, we have
\begin{align*}
|z-x |^{4\sigma}= \big| |z|e^{\i\theta_z}-|x|e^{\i \theta_x}  \big|^{4\sigma}= \big(|z|^2+|x|^2-2|x|\,|z|\cos(\theta_z-\theta_x)\big)^{2\sigma}.
\end{align*}
To establish \eqref{ineq:|z|^(2-2sigma)z^2-|x|^(2-2sigma)<|x-z|^2sigma:sigma<1/2}, it suffices to prove that
\begin{align*}
\big(|z|^{2\sigma}+|x|^{2\sigma}\big)^2-4|x|^{2\sigma}|z|^{2\sigma}\cos^2(\theta_z-\theta_x) \le 2^{3-2\sigma} \big(|z|^2+|x|^2-2|x|\,|z|\cos(\theta_z-\theta_x)\big)^{2\sigma},
\end{align*}
which is equivalent to
\begin{align} \label{ineq:|z|^(2-2sigma)z^2-|x|^(2-2sigma)<|x-z|^2sigma:sigma<1/2:a}
&\frac{(|z|^{2\sigma}+|x|^{2\sigma})^2}{(|z|^2+|x|^2)^{2\sigma}}-2^{2-2\sigma}\Big(\frac{2|x||z|}{|z|^2+|x|^2}\Big)^{2\sigma}\cos^2(\theta_z-\theta_x) \notag \\
&\qquad\le 2^{3-2\sigma} \Big(1-\frac{2|x||z|}{|z|^2+|x|^2}\cos(\theta_z-\theta_x)\Big)^{2\sigma}.
\end{align}
To see this, we note that since $0\le\sigma\le 1/2<1$, the following holds for all $a\in[0,1]$
\begin{align*}
a^{\sigma}+(1-a)^\sigma\le 2^{1-\sigma},
\end{align*}
whence
\begin{align*}
\frac{(|z|^{2\sigma}+|x|^{2\sigma})^2}{(|z|^2+|x|^2)^{2\sigma} }= \Big( \Big(\frac{|z|^2}{|z|^2+|x|^2}\Big)^{\sigma}+\Big(\frac{|x|^2}{|z|^2+|x|^2}\Big)^{\sigma} \Big)^2\le 2^{2-2\sigma}.
\end{align*}
It follows that \eqref{ineq:|z|^(2-2sigma)z^2-|x|^(2-2sigma)<|x-z|^2sigma:sigma<1/2:a} is reduced to 
\begin{align*}
1\le \Big(\frac{2|x||z|}{|z|^2+|x|^2}\Big)^{2\sigma}\cos^2(\theta_z-\theta_x) + 2 \Big(1-\frac{2|x||z|}{|z|^2+|x|^2}\cos(\theta_z-\theta_x)\Big)^{2\sigma}.
\end{align*}
To this end, we invoke the hypothesis that $\sigma\le 1/2$ to infer
\begin{align*}
&\Big(\frac{2|x||z|}{|z|^2+|x|^2}\Big)^{2\sigma}\cos^2(\theta_z-\theta_x) + 2 \Big(1-\frac{2|x||z|}{|z|^2+|x|^2}\cos(\theta_z-\theta_x)\Big)^{2\sigma} \\
&\ge \Big(\frac{2|x||z|}{|z|^2+|x|^2}\Big)^{2}\cos^2(\theta_z-\theta_x)+2 \Big(1-\frac{2|x||z|}{|z|^2+|x|^2}\cos(\theta_z-\theta_x)\Big) \\
&= \Big(\frac{2|x||z|}{|z|^2+|x|^2}\cos(\theta_z-\theta_x)-1\Big)^2+1\ge 1.
\end{align*}
This establishes \eqref{ineq:|z|^(2-2sigma)z^2-|x|^(2-2sigma)<|x-z|^2sigma:sigma<1/2:a}, thereby producing \eqref{ineq:|z|^(2-2sigma)z^2-|x|^(2-2sigma)<|x-z|^2sigma:sigma<1/2} in the case $\sigma\in[0,1/2]$.

Turning to the case $\sigma>1/2$, we recast the left-hand side of \eqref{ineq:|z|^(2-2sigma)z^2-|x|^(2-2sigma)<|x-z|^2sigma:sigma>1/2} as follow
\begin{align*}
& |z|^{2\sigma-2}z^2-|x|^{2\sigma-2}x^2  \\
&= |z|^{2\sigma-1}\Big( \frac{z^2}{|z|}-\frac{x^2}{|x|}\Big)+|z|^{2\sigma}\frac{x^2}{|x|}\Big(\frac{1}{|z|}-\frac{1}{|x|}\Big)+\big(|z|^{2\sigma}-|x|^{2\sigma}\big)\frac{x^2}{|x|^2}\\
&= I_1+I_2+I_3.
\end{align*}
Concerning $I_1$, we invoke the previous case $\sigma\le 1/2$ to infer 
\begin{align*}
I_1 =|z|^{2\sigma-1}\Big( |z|\frac{z^2}{|z|^2}-|x|\frac{x^2}{|x|^2}\Big) \le c\, |z|^{2\sigma-1}|z-x|.
\end{align*}
Regarding $I_2$, an application of the triangle inequality gives
\begin{align*}
I_2=|z|^{2\sigma}\frac{x^2}{|x|}\frac{|x|-|z|}{|x|\,|z|}\le |z|^{2\sigma-1}|z-x|.
\end{align*}
Lastly, to estimate $I_3$, since $2\sigma>1$, we employ the mean value theorem and obtain
\begin{align*}
I_3 = \big(|z|^{2\sigma}-|x|^{2\sigma}\big)\frac{x^2}{|x|^2} \le c\, |z-x|(|x|^{2\sigma-1}+|z|^{2\sigma-1}).
\end{align*}
Altogether, we deduce
\begin{align*}
|z|^{2\sigma-2}z^2-|x|^{2\sigma-2}x^2\le c\, |z-x|(|x|^{2\sigma-1}+|z|^{2\sigma-1}).
\end{align*}
This produces inequality \eqref{ineq:|z|^(2-2sigma)z^2-|x|^(2-2sigma)<|x-z|^2sigma:sigma>1/2} for the case $\sigma>1/2$. The proof is thus finished.

\end{proof}

\bibliographystyle{abbrv}
{\footnotesize\bibliography{wave-bib}}

@article{barbu2014stochastic,
  title={{Stochastic nonlinear Schr{\"o}dinger equations with linear multiplicative noise: rescaling approach}},
  author={Barbu, Viorel and R{\"o}ckner, Michael and Zhang, Deng},
  journal={J. Nonlinear Sci.},
  volume={24},
  number={3},
  pages={383--409},
  year={2014},
  publisher={Springer}
}

@article{barbu2016stochastic,
  title={{Stochastic nonlinear Schr{\"o}dinger equations}},
  author={Barbu, Viorel and R{\"o}ckner, Michael and Zhang, Deng},
  journal={Nonlinear Anal. Theory Methods Appl.},
  volume={136},
  pages={168--194},
  year={2016},
  publisher={Elsevier}
}

@article{barbu2017stochastic,
  title={{Stochastic nonlinear Schr{\"o}dinger equations: no blow-up in the non-conservative case}},
  author={Barbu, Viorel and R{\"o}ckner, Michael and Zhang, Deng},
  journal={J. Differ. Equ.},
  volume={263},
  number={11},
  pages={7919--7940},
  year={2017},
  publisher={Elsevier}
}

@article{barbu2017stochasticLOG,
  title={{The stochastic logarithmic Schr{\"o}dinger equation}},
  author={Barbu, Viorel and R{\"o}ckner, Michael and Zhang, Deng},
  journal={J. Math. Pures Appl.},
  volume={107},
  number={2},
  pages={123--149},
  year={2017},
  publisher={Elsevier}
}

@book{bergh2012interpolation,
  title={{Interpolation Spaces: an Introduction}},
  author={Bergh, J{\"o}ran and L{\"o}fstr{\"o}m, J{\"o}rgen},
  volume={223},
  year={2012},
  publisher={Springer Science \& Business Media}
}

@article{bessaih2020invariant,
  title={{Invariant measures for stochastic damped 2D Euler equations}},
  author={Bessaih, Hakima and Ferrario, Benedetta},
  journal={Commun. Math. Phys.},
  volume={377},
  number={1},
  pages={531--549},
  year={2020},
  publisher={Springer}
}

@article{brzezniak2019martingale,
  title={{Martingale solutions for the stochastic nonlinear Schr{\"o}dinger equation in the energy space}},
  author={Brze{\'z}niak, Zdzis{\l}aw and Hornung, Fabian and Weis, Lutz},
  journal={Probab. Theory Relat. Fields},
  volume={174},
  pages={1273--1338},
  year={2019},
  publisher={Springer}
}

@article{brzezniak2023invariant,
  title={{Invariant measures for a stochastic nonlinear and damped 2D Schr{\"o}dinger equation}},
  author={Brze{\'z}niak, Zdzis{\l}aw and Ferrario, Benedetta and Zanella, Margherita},
  journal={Nonlinearity},
  volume={37},
  number={1},
  pages={015001},
  year={2023},
  publisher={IOP Publishing}
}

@article{brzezniak2023ergodic,
  title={{Ergodic results for the stochastic nonlinear Schr{\"o}dinger equation with large damping}},
  author={Brze{\'z}niak, Zdzislaw and Ferrario, Benedetta and Zanella, Margherita},
  journal={J. Evol. Equ.},
  volume={23},
  number={1},
  pages={19},
  year={2023},
  publisher={Springer}
}

@article{brzezniak2025global,
  title={{Global well posedness and ergodic results in regular Sobolev spaces for the nonlinear Schr{\"o}dinger equation with multiplicative noise and arbitrary power of the nonlinearity}},
  author={Brze{\'z}niak, Zdzis{\l}aw and Ferrario, Benedetta and Maurelli, Mario and Zanella, Margherita},
  journal={Discrete Contin. Dyn. Syst.},
  volume={45},
  number={9},
  pages={3217--3257},
  year={2025},
  publisher={Discrete and Continuous Dynamical Systems}
}

@book{cazenave2003semilinear,
  title={{Semilinear Schrodinger equations}},
  author={Cazenave, T},
  publisher={New York University, Courant Institute of Mathematical Sciences},
  year={2003}
}

@article{cerrai2020convergence,
  title={On the convergence of stationary solutions in the Smoluchowski-Kramers approximation of infinite dimensional systems},
  author={Cerrai, Sandra and Glatt-Holtz, Nathan},
  journal={J. Funct. Anal.},
  volume={278},
  number={8},
  pages={108421},
  year={2020},
  publisher={Elsevier}
}

@article{chen2025exponential,
  title={{Exponential mixing for the randomly forced NLS equation}},
  author={Chen, Yuxuan and Xiang, Shengquan and Zhang, Zhifei and Zhao, Jia-Cheng},
  journal={arXiv preprint arXiv:2506.10318},
  year={2025}
}

@article{cui2019global,
  title={{On global existence and blow-up for damped stochastic nonlinear Schr{\"o}dinger equation}},
  author={Cui, Jianbo and Hong, Jialin and Sun, Liying},
  journal={Discrete Continuous Dyn. Syst. Ser. B},
  volume={25},
  number={12},
  year={2019}
}

@article{de1999stochastic,
  title={{A stochastic nonlinear Schr{\"o}dinger equation with multiplicative noise}},
  author={de Bouard, Anne and Debussche, Arnaud},
  journal={Commun. Math. Phys.},
  volume={205},
  pages={161--181},
  year={1999},
  publisher={Springer}
}

@article{de2003stochastic,
  title={{The Stochastic Nonlinear Schr{\"o}dinger Equation in $H^1$}},
  author={de Bouard, A and Debussche, A},
  journal={Stoch. Anal. Appl.},
  volume={21},
  number={1},
  pages={97--126},
  year={2003},
  publisher={Informa UK Limited}
}

@article{de2010nonlinear,
  title={{The nonlinear Schr{\"o}dinger equation with white noise dispersion}},
  author={De Bouard, Anne and Debussche, Arnaud},
  journal={J. Funct. Anal.},
  volume={259},
  number={5},
  pages={1300--1321},
  year={2010},
  publisher={Elsevier}
}

@article{debussche2005ergodicity,
  title={{Ergodicity for a weakly damped stochastic non-linear Schr{\"o}dinger equation}},
  author={Debussche, Arnaud and Odasso, Cyril},
  journal={J. Evol. Equ.},
  volume={5},
  number={3},
  pages={317--356},
  year={2005},
  publisher={Springer}
}

@article{debussche20111d,
  title={{1D quintic nonlinear Schr{\"o}dinger equation with white noise dispersion}},
  author={Debussche, Arnaud and Tsutsumi, Yoshio},
  journal={J. Math. Pures Appl.},
  volume={96},
  number={4},
  pages={363--376},
  year={2011},
  publisher={Elsevier}
}

@article{dong2023ergodicity,
  title={Ergodicity for stochastic conservation laws with multiplicative noise},
  author={Dong, Zhao and Zhang, Rangrang and Zhang, Tusheng},
  journal={Commun. Math. Phys.},
  volume={400},
  number={3},
  pages={1739--1789},
  year={2023},
  publisher={Springer}
}

@article{ekren2017existence,
  title={{Existence of invariant measures for the stochastic damped Schr{\"o}dinger equation}},
  author={Ekren, Ibrahim and Kukavica, Igor and Ziane, Mohammed},
  journal={Stoch. Partial Differ. Equ.: Anal. Comput.},
  volume={5},
  pages={343--367},
  year={2017},
  publisher={Springer}
}

@article{ferrario2023uniqueness,
  title={{Uniqueness of the invariant measure and asymptotic stability for the 2D Navier Stokes equations with multiplicative noise}},
  author={Ferrario, Benedetta and Zanella, Margherita},
  journal={arXiv preprint arXiv:2307.03483},
  year={2023}
}

@article{ferrario2025stationary,
  title={{Stationary solutions for the nonlinear Schr{\"o}dinger equation}},
  author={Ferrario, Benedetta and Zanella, Margherita},
  journal={Stoch. Partial Differ. Equ.: Anal. Comput.},
  pages={1--53},
  year={2025},
  publisher={Springer}
}

@article{gao2024polynomial,
  title={{Polynomial mixing for white-forced Kuramoto-Sivashinsky equation on the whole line}},
  author={Gao, Peng},
  journal={arXiv preprint arXiv:2408.00592},
  year={2024}
}

@article{gao2024polynomialWAVE,
  title={Polynomial mixing for the white-forced wave equation on the whole line},
  author={Gao, Peng},
  journal={arXiv preprint arXiv:2412.13230},
  year={2024}
}

@article{glatt2021long,
  title={{On the long-time statistical behavior of smooth solutions of the weakly damped, stochastically-driven KdV equation}},
  author={Glatt-Holtz, Nathan and Martinez, Vincent R and Richards, Geordie H},
  journal={Trans. Amer. Math. Soc. },
  year={2025}
}

@article{hornung2018nonlinear,
  title={{The nonlinear stochastic Schr{\"o}dinger equation via stochastic Strichartz estimates}},
  author={Hornung, Fabian},
  journal={J. Evol. Equ.},
  volume={18},
  pages={1085--1114},
  year={2018},
  publisher={Springer}
}

@book{karatzas2012brownian,
  title={{Brownian Motion and Stochastic Calculus}},
  author={Karatzas, Ioannis and Shreve, Steven},
  volume={113},
  year={2012},
  publisher={Springer Science \& Business Media}
}

@article{kim2006invariant,
  title={{Invariant measures for a stochastic nonlinear Schr{\"o}dinger equation}},
  author={Kim, Jong Uhn},
  journal={Indiana Math. J.},
  pages={687--717},
  year={2006},
  publisher={JSTOR}
}

@article{kim2008stochastic,
  title={On the stochastic wave equation with nonlinear damping},
  author={Kim, Jong Uhn},
  journal={Appl. Math. Optim.},
  volume={58},
  number={1},
  pages={29--67},
  year={2008},
  publisher={Springer}
}

@article{liu2024exponential,
  title={Exponential mixing for random nonlinear wave equations: weak dissipation and localized control},
  author={Liu, Ziyu and Wei, Dongyi and Xiang, Shengquan and Zhang, Zhifei and Zhao, Jia-Cheng},
  journal={arXiv preprint arXiv:2407.15058},
  year={2024}
}

@article{martirosyan2014exponential,
  title={Exponential mixing for the white-forced damped nonlinear wave equation},
  author={Martirosyan, Davit},
  journal={Evol. Equ. Control Theory.},
  volume={3},
  number={4},
  pages={645},
  year={2014},
  publisher={American Institute of Mathematical Sciences}
}

@article{nersesyan2008polynomial,
  title={{Polynomial mixing for the complex Ginzburg--Landau equation perturbed by a random force at random times}},
  author={Nersesyan, Vahagn},
  journal={J. Evol. Equ.},
  volume={8},
  number={1},
  pages={1--29},
  year={2008},
  publisher={Springer}
}

@article{nersesyan2024exponential,
  title={{Exponential mixing for the white-forced complex Ginzburg--Landau equation in the whole space}},
  author={Nersesyan, Vahagn and Zhao, Meng},
  journal={SIAM J. Math. Anal.},
  volume={56},
  number={3},
  pages={3646--3678},
  year={2024},
  publisher={SIAM}
}

@article{nersesyan2024polynomial,
  title={{Polynomial mixing for the white-forced Navier-Stokes system in the whole space}},
  author={Nersesyan, Vahagn and Zhao, Meng},
  journal={arXiv preprint arXiv:2410.15727},
  year={2024}
}

@article{nguyen2023small,
  title={The small mass limit for long time statistics of a stochastic nonlinear damped wave equation},
  author={Nguyen, Hung D},
  journal={J. Diff. Equ.},
  volume={371},
  pages={481--548},
  year={2023},
  publisher={Elsevier}
}

@article{nguyen2024polynomial,
  title={Polynomial mixing of a stochastic wave equation with dissipative damping},
  author={Nguyen, Hung D},
  journal={Appl. Math. Opt.},
  volume={89},
  pages={1--31},
  year={2024},
  publisher={Springer}
}

@article{nguyen2024inviscid,
  title={{The inviscid limit for long time statistics of the one-dimensional stochastic Ginzburg-Landau equation}},
  author={Nguyen, Hung D},
  journal={arXiv preprint arXiv:2403.08951},
  year={2024}
}

@article{odasso2006ergodicity,
  title={{Ergodicity for the stochastic complex Ginzburg-Landau equations}},
  author={Odasso, Cyril},
  journal={Ann. Inst. Henri Poincare (B) Probab. Stat.},
  volume={42},
  number={4},
  pages={417--454},
  year={2006}
}

@article{roy2025large,
  title={{Large deviation principle for a stochastic nonlinear damped Schr\"odinger equation}},
  author={Roy, Sandip and Mukherjee, Debopriya and Mohan, Manil Thankamani},
  journal={arXiv preprint arXiv:2510.06110},
  year={2025}
}

@book{triebel1978interpolation,
  title={{Interpolation Theory, Function Spaces, Differential Operators}},
  author={Triebel, Hans},
  publisher={North
Holland Mathematical Library, 18. North Holland, Amsterdam-New York},
  year={1978}
}

\end{document}